\documentclass[reqno]{amsart}
\usepackage{amssymb}
\usepackage[dvips]{epsfig}
\usepackage{graphicx}
\usepackage{color}
\usepackage{amsmath}
\usepackage{amssymb}
\usepackage{amsfonts}
\usepackage{amsthm}
\usepackage{mathrsfs}

\newtheorem{thm}{Theorem}[section]
\newtheorem{lem}[thm]{Lemma}

\theoremstyle{remark}
\newtheorem{rem}{\bf Remark}[section]
\theoremstyle{definition}
\newtheorem{defn}[thm]{Definition}

\numberwithin{equation}{section}
\usepackage[colorlinks=true,pdfstartview=FitV,linkcolor=magenta,citecolor=cyan]{hyperref}

\usepackage{bm}
\begin{document}

\title[ ]{Localized state for  nonlinear  disordered stark model}

\author[Shengqing Hu]{Shengqing Hu}

\address{Faculty of Computational Mathematics and Cybernetics, Shenzhen MSU-BIT University, 518172, Shenzhen, China}  \email{hushengqing@smbu.edu.cn}

\author[Yingte Sun]{Yingte Sun*}
  \thanks{*Corrseponding author}

\address{School of Mathematical Sciences, Yangzhou University, yangzhou, P.R.China} \email{sunyt@yzu.edu.cn}

\keywords{Nonlinear disordered stark model, Localization, KAM theory}
\thanks{{\em 2010 Mathematics Subject Classification.} Primary: 35Q40. Secondary: 37K55.}

\begin{abstract}
In this paper, we consider the following nonlinear disordered Stark model:
$${\bf i}\partial_tu_n+\delta(u_{n+1}+u_{n-1})+nu_n+v_nu_n+\epsilon |u_n|^{2}u_n=0,\quad n\in\mathbb{Z}.$$
By employing the diagonalization of the associated linear operators and the KAM theory for nonlinear Hamiltonian systems, we establish that for parameters $\delta$ and $\varepsilon$ in a reasonable range, and for most realization of random variables
$v=\{v_n\}_{n \in \mathbb{Z}}$, there exist time quasi-periodic and spatially localized states that exhibit arbitrary power-law spatial decay.
 \end{abstract}

\maketitle

\section{Introduction and main results}

Anderson localization is a fundamental concept in condensed matter physics, explaining how disorder can suppress  the diffusion of quantum particles living on lattice. The phenomenon is modelled by a random Schr\"odinger operator $\mathrm{H}$ defined on the lattice $\mathbb{Z}^d$. This operator acts  on a wave function $q$ as follows:
\begin{equation}\label{linear}
(\mathrm{H} q)(n) = -\sum_{|m-n|_1=1} q(m) + V_{\omega}(n) q(n),
\end{equation}
where the on-site potentials ${V_{\omega}(n)}$ are independent and identically distributed random variables. Localization manifests in two complementary ways: spectrally, $\mathrm{H}$ exhibits only pure point spectrum with exponentially decaying eigenfunctions; and dynamically, through the preservation of spatial localization over time. The latter is quantitatively captured by the condition that for an well-localized initial state $q(0)$, the solution to the linear equation \begin{equation}\mathrm{i}\partial_t q = \mathrm{H} q \end{equation} obeys
\begin{equation}
\sup_{t \in \mathbb{R}} \sum_{n \in \mathbb{Z}^d} (1 + |n|_2)^{2p} |q(n)(t)|^2 < \infty,
\end{equation}
demonstrating the  absence of diffusion. Localization can be induced by mechanisms beyond disorder, most notably quasi-periodic potentials. The Mathieu operator \cite{FSW1990} and the Maryland model \cite{BLS1983} are two paradigmatic examples in this class. Their localization properties are intimately tied to-and indeed proven from-the defining geometric features of the potential, such as non-degeneracy or monotonicity.

Another distinct mechanism is Wannier-Stark localization \cite{W1960}, a phenomenon in condensed matter physics where a single particle in a lattice, under a uniform electric field, exhibits super-exponentially localized eigenstates. Physically, this occurs because the field-induced energy gap suppresses the particle's diffusion, which is otherwise facilitated by the lattice hopping terms. Mathematically, it has been established that for such models, the localization properties remain robust against different hopping terms \cite{dp2019,SW2024} or perturbations of arbitrary strength \cite{SW2025,M2025}.

Over the past two decades, there has been growing interest in whether solutions of  linear systems  as \eqref{linear} exhibit richer  behavior under   nonlinear effects. A representative model  is the discrete nonlinear disordered Schr\"odinger equation (DNDSE):
\begin{equation}
\mathrm{i}\partial_t q(n) = -\delta\sum_{|m-n|_1=1} \big(q(m) - q(n)\big) + V_{\omega}(n) q(n) + \epsilon |q(n)|^2 q(n),
\end{equation}
with its quasi-periodic counterpart (DNQSE)\footnote{discrete nonlinear quasi-periodic Schr\"odinger equation} by  a potential such as $V(n)=\cos(\theta+n\cdot \omega)$.
From a physical perspective, these nonlinear models serve as effective approximate models for certain quantum many-body systems and provide crucial insights into Bose-Einstein condensates. It is widely accepted in the physics community  that even weak nonlinearities can cause sub-diffusive transport of wave packet at high energies. Mathematically, investigating the dynamic behavior of localized state under the nonlinear effects is a intriguing problem of nonlinear dynamics.  However, rigorous mathematical results on  sub-diffusive phenomena remain scarce. Most existing mathematical studies are confined to following relatively idealized settings:

 $\bullet$ Investigating whether localization over finite time, albeit long, time scales.  In various parameter regimes and phase space, power-law or sub-exponential long-time localization have been rigorously established. We refer readers to \cite{WZ09,CSZ21,CSW24,FKS08,RHP25}.

 $\bullet$ Constructing special localized states, such as time quasi-periodic (or periodic) localized states. These states correspond to low-energy solutions and are consistent with  the physical picture of inhibited transport.

This work focuses on  the second category of problem: the construction of time quasi-periodic localized states.

For the DNDSEs,  Albanese-Fr\"ohlich-Spencer  \cite{AF1988,AFS1988}  firstly employ multi-scale analysis and bifurcation methods to obtain nonlinear eigenstates, thereby establishing a class of time-periodic localized states. Bourgain-Wang \cite{BW2008} used multi-scale analysis and Newton iteration to construct time quasi-periodic localized states for the high-dimensional DNDSEs. However, their approach required both the hopping term and the nonlinear term to be treated as small perturbations simultaneously.  More recently, Liu-Wang \cite{LW2024} revisited the one-dimensional DNQSE and established the existence of such time-quasi-periodic localized states at arbitrary disorder.\footnote{Equivalently, there is no requirement regarding the size of Laplace.}

Significant progress has also been made in the study of the DNQSEs. Geng-Zhao-You \cite{GZ013,GYZ2014} applied classical infinite-dimensional KAM theory to prove the existence of time quasi-periodic localized states for certain  one-dimensional DNQSEs.  Wang-Shi et al. \cite{SW23, SW24, LSZ2025}  extended these results to several high-dimensional DNQSEs by employing multi-scale analysis.  This work marked a significant breakthrough, as extending localization results to higher dimensions presents substantial additional technical challenges not present in the one-dimensional case.

\subsection{Main result}\

This paper aims to investigate how a static force affects the nonlinear localized state for the DNDSE model? This setting corresponding to Bose-Einstein condensates subject to gravitational field, an  magnetic field gradient,  or
an acceleration of the optical lattice as a whole. To address this problem, we analysis  the  nonlinear  disordered stark model governed by the following equation:
 \begin{equation}\label{eq1.1}
{\bf i}\partial_tu_n+\delta(u_{n+1}+u_{n-1})+nu_n+v_nu_n+\epsilon |u_n|^{2}u_n=0,\quad n\in\mathbb{Z}.
\end{equation}
where $v_n (n\in \mathbb{Z})$ are i.i.d random variables in the interval $[-\frac{1}{10},\frac{1}{10}]$.

\begin{thm}\label{thm1.1}
Consider one-dimensional nonlinear disordered stark model \eqref{eq1.1}. Fix $n_k \in \mathbb{Z}, k=1,\cdots, b$ and $d>0$. Let $\mathcal{J}=\{n_k\}^{b}_{k=1} \in \mathbb{Z}$ and $\mathcal{V}=\{v_{\alpha}\}_{\alpha \in \mathcal{J}} \in \mathbb{R}^b$.
Then for  $0<\delta< \delta_0=O(\frac{1}{b})$, there exits a  positive constant $\epsilon_0(d,b,\mathcal{J})\ll 1$ such that, for any $0<\epsilon<\epsilon_0$, there exists a cantor like set $\mathcal{O}_{\epsilon}\subset \mathcal{O}:=[-\frac{1}{10},\frac{1}{10}]^b$ with
\begin{equation}
\text{meas}( \mathcal{O} \backslash \mathcal{O}_{\epsilon}) \leq C\epsilon^{\frac{1}{16}},
\end{equation}
such that if $\mathcal{V} \in \mathcal{O}_\epsilon $, then equation \eqref{eq1.1} has solution $u(x,t)$ of the form
\begin{equation}\label{solution}
 u(x,t)=\sum_{n\in \mathbb{Z}}u_n(t)\delta_n(x)
\end{equation}
where $\delta_n$ are standard basis for $\mathbb{Z}$ and  $q_n(t)$ are quasi-periodic function with frequencies $\omega=(\omega_1.\cdots,\omega_b)$. Moreover,
\begin{equation}
|\omega-\mathcal{V}|\leq C \epsilon,
\end{equation}
and
\begin{equation}
\sup_{t \in \mathbb{R}}\sum_{n \in \mathbb{Z}}|u_n(t)|^2\langle n \rangle^{2d} <+\infty.
\end{equation}
\end{thm}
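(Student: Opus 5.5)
We follow the strategy announced in the abstract: first diagonalize the linear Stark operator with small hopping, then apply an infinite-dimensional KAM theorem, with the random potentials on $\mathcal{J}$ serving as parameters.

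\textbf{Step 1: diagonalize the linear part.} Let $H = \delta\Delta + \mathrm{diag}(n+v_n)$, acting on $\ell^2(\mathbb{Z})$. For $0<\delta<\delta_0$ the diagonal entries are separated,
\[
|(n+v_n)-(m+v_m)|\ge |n-m|-\tfrac15\ge \tfrac45 |n-m| \quad (n\neq m),
\]
uniformly in $v$. A convergent Newton/Jacobi scheme (or a Neumann-series construction of the eigenvectors) then gives an orthogonal matrix $Q=(q_{nm})$ with $Q^{T}HQ=\mathrm{diag}(\lambda_n)$. We aim for three properties:
\begin{itemize}
\item $|q_{nm}-\delta_{nm}|\le (C\delta)^{|n-m|}$, so the decay is super-exponential, as in Wannier--Stark localization;
\item $\lambda_n = n+v_n+O(\delta^2)$, with $\partial_{v_m}\lambda_n = \delta_{nm}+O((C\delta)^{|n-m|})$;
\item gaps $|\lambda_n-\lambda_m|\ge \tfrac12|n-m|$ for $n\neq m$.
\end{itemize}
The smallness $\delta_0=O(1/b)$ is what keeps the $b\times b$ Jacobian $\partial\lambda_{\mathcal J}/\partial v_{\mathcal J}$ close to the identity, so the frequency map is a Lipschitz homeomorphism on $\mathcal{O}$. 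In the new coordinates $u=Qq$ the Hamiltonian becomes
\[
\sum_n \lambda_n|q_n|^2 + \frac{\epsilon}{2}\sum_{n,m,k,l} a_{nmkl}\, q_n\bar q_m q_k\bar q_l ,
\]
with $|a_{nmkl}|\le (C\delta)^{\max-\min}$. This decay is strong enough that the nonlinearity has a bounded, regular vector field on the weighted space $\ell^2_d$ with norm $\sum_n\langle n\rangle^{2d}|q_n|^2$, for every $d$.

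\textbf{Step 2: action--angle variables and normal form.} Introduce action--angle variables $q_n=\sqrt{I_n+y_n}\,e^{i x_n}$ for $n\in\mathcal J$, with $I_n\sim\epsilon^{\sigma}$ (or simply $I_n\sim 1$, keeping $\epsilon$ small), and keep $z=(q_n)_{n\notin\mathcal J}$ as normal variables. The Hamiltonian takes the form
\[
N+P,\qquad N=\langle\omega(\mathcal V),y\rangle+\sum_{n\notin\mathcal J}\Omega_n|z_n|^2 ,
\]
where $\omega=\lambda_{\mathcal J}+O(\epsilon)$, $\Omega_n=\lambda_n+O(\epsilon)$, and $P=O(\epsilon)$ is real analytic in $x$ and regular in the weighted space.

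\textbf{Step 3: KAM theorem with power-law weights.} We then apply a KAM theorem of Pöschel/Geng--You type on $\ell^2_d$. This requires three checks.
\begin{itemize}
\item \emph{Nondegeneracy.} This follows from Step 1: $\mathcal V\mapsto\omega$ is bi-Lipschitz, and $\Omega_n$ depends on $\mathcal V$ only weakly.
\item \emph{Asymptotics.} We have $\Omega_n\approx n$, so $|\Omega_n-\Omega_m|\ge c|n-m|$; this is the linear growth that makes second-order Melnikov conditions manageable, as in the $1$D NLS with spectrum $\sim n^2$.
\item \emph{Regularity.} The perturbation $P$ has the required regularity with respect to the weight $\langle n\rangle^{d}$.
\end{itemize}
The parameter excision is estimated as follows. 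The conditions
\[
|\langle k,\omega\rangle|\ge \gamma/|k|^{\tau},\qquad |\langle k,\omega\rangle+\Omega_n|\ge\gamma/|k|^{\tau},\qquad |\langle k,\omega\rangle+\Omega_n\pm\Omega_m|\ge \gamma/|k|^{\tau}
\]
remove a set of measure $O(\gamma)$. For the last family, note that $|\Omega_n\pm\Omega_m|$ can only be near $|\langle k,\omega\rangle|\le C|k|$ when $|n\mp m|\le C|k|$, giving at most $O(|k|)$ relevant pairs per $k$ after using the constant shift $\Omega_n-n$ across the differences. Choosing $\gamma=\epsilon^{1/16}$ (the excess $\epsilon$-power absorbs the KAM smallness condition $\epsilon\le\gamma^{C}$) yields $\mathrm{meas}(\mathcal O\setminus\mathcal O_\epsilon)\le C\epsilon^{1/16}$.

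\textbf{Step 4: transfer back.} The KAM torus is an invariant quasi-periodic solution $q(t)$ with $\sup_t\|q(t)\|_{\ell^2_d}<\infty$. Since $Q$ is bounded on $\ell^2_d$ by the super-exponential decay of its entries, $u=Qq$ satisfies the same weighted bound. Finally, $|\omega-\mathcal V|\le|\omega-\lambda_{\mathcal J}|+|\lambda_{\mathcal J}-\mathcal V|$; the first term is $O(\epsilon)$, and the second must be made $O(\epsilon)$ either by absorbing it into the parametrization or by requiring $\delta^2\lesssim\epsilon$.

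\textbf{Main obstacle.} We expect the main obstacle to be the second Melnikov conditions $\langle k,\omega\rangle+\Omega_n-\Omega_m$ for infinitely many pairs $n,m$. This requires asymptotics $\Omega_n-\Omega_m=n-m+(\text{small }\mathcal V\text{-dependent corrections decaying in }n,m)$, and establishing them through the KAM iteration needs a Töplitz--Lipschitz or decay property of the normal form. We would first try to show that the $O(\epsilon)$ frequency shifts of $\Omega_n$ decay like $\langle n\rangle^{-1}$ up to constants, using the super-exponential localization of $a_{nmkl}$ away from $\mathcal J$.
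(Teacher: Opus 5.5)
Your Steps 1, 2 and 4 follow the paper's architecture: diagonalize $\mathcal L$ with control of $\partial d_n/\partial v_m$, use $\xi=v_{\mathcal J}$ as parameters with $\delta\lesssim 1/b$ to get the twist of Assumptions A--B, introduce action--angle variables on $\mathcal J$, and transfer back through $\mathrm G$. The gap is in Step 3: the parameter excision is false as stated. The normal frequencies are $\Omega_n=d_n=n+v_n+O(\mathrm C(\delta))$. For $n\notin\mathcal J$ the $v_n$ are fixed random numbers, not parameters, so $\Omega_n-n$ is not a constant shift but a non-decaying, $n$-dependent sequence. Fix $k\neq0$ and $l$. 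The values $\Omega_{m+l}-\Omega_m\approx l+v_{m+l}-v_m$ ($m\in\mathbb Z$) fill out an interval of length about $\frac25$ densely for a typical realization, and for large $|m|$ they barely depend on $\xi$. Hence the union over $m$ of the slabs $\{\xi:|\langle k,\omega\rangle+\Omega_{m+l}-\Omega_m|<\gamma|k|^{-\tau}\}$ has measure bounded below independently of $\gamma$. For instance, with $k=e_1$ and $l=-n_1\neq0$ the divisor is $\approx\xi_1+v_{m-n_1}-v_m$. So there is no reduction to $O(|k|)$ pairs. Your fallback, proving $\langle n\rangle^{-1}$ decay of the $O(\epsilon)$ shifts, cannot help, because the obstruction is already present at order zero; no T\"oplitz--Lipschitz-type asymptotics exist for this spectrum.

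The $k=0$ divisors are a second problem. $\Omega_m+\Omega_{-m}\approx v_m+v_{-m}$ comes arbitrarily close to $0$ for infinitely many $m$, and $\Omega_0\approx v_0$ may be tiny if $0\notin\mathcal J$. These have essentially no $\xi$-dependence, so they cannot be excised, and P\"oschel's nondegeneracy hypothesis fails.

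The paper resolves both issues with two ingredients absent from your plan.

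\textbf{Gauge invariance.} The $U(1)$ gauge invariance of \eqref{eq1.1} (Assumption E, Lemma \ref{ver}) is preserved by Poisson brackets and forces $P_{kl\alpha\beta}=0$ unless $\sum_ik_i+|\alpha|-|\beta|=0$. Consequently the $k=0$ terms linear in $q$ and the $k=0$ terms $q_mq_n$, $\bar q_m\bar q_n$ never appear.

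\textbf{Decay from the origin and truncation.} The part of the perturbation that is normalized decays away from the origin: $\Vert P^{low}_{\alpha\beta}\Vert\le\varepsilon e^{-\rho n^*_{\alpha\beta}}$. This comes from placing $\mathcal J$ near $0$ ($|n_i|\le\frac16|\ln\epsilon|$) together with $|\mathcal G_{(m,m_1,m_2,m_3)}|\le24e^{-\frac18(\max-\min)}$. It propagates through the brackets (Assumption D, Lemma \ref{5.5}), while the quartic part $\dot P$ on $\mathbb Z_1$, which decays only off-diagonally, is never normalized. This lets the paper impose at step $\nu$ only the finitely many second-order conditions with $|m|,|n|\le K_{\nu+1}$, with tolerance $\gamma_\nu|k|^{-\tau}K_{\nu+1}^{-2}$. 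The tail $|n|>K_{\nu+1}$ is left in $\hat P$, which is $O(\varepsilon^{5/4})$ because of the factor $e^{-(\rho_\nu-\rho_{\nu+1})K_{\nu+1}}$. With $\rho_\nu=K_\nu^{-1}\to0$ the excised measure sums to $O(\gamma_0)=O(\varepsilon^{1/16})$. The price is the loss of exponential decay, which is exactly why only $\langle n\rangle^{2d}$ localization survives.

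Finally, $|\lambda_{\mathcal J}-\mathcal V|$ cannot be made $O(\epsilon)$, since $\lambda_{n_k}-v_{n_k}\approx n_k$. Imposing $\delta^2\lesssim\epsilon$ would return you to the atomic limit the result is meant to avoid. What the scheme actually controls is the distance from the torus frequencies to $d_{\mathcal J}(\xi)$, namely $c\varepsilon^{5/6}$ with $\varepsilon=\epsilon^{1/3}$.
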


\begin{rem}\

$\bullet$ The random variables are assumed in this paper to be uniformly distributed on $[-\frac{1}{10}, \frac{1}{10}]$. Both the results and the proofs, however, remain valid for any absolutely continuous distribution.

$\bullet$ Indeed, by applying the nonlinear KAM theorem (Theorem \ref{main theorem}), the solution to the equation can be expressed in a complete orthogonal basis of the linear operator. Then, using the unitary transformation derived from the KAM iteration of the linear part, the solution can be reformulated in the standard basis ${\delta_n}$.

$\bullet$ We only use finitely many random variables to control the resonance phenomena, while no conditions are imposed on the remaining ones. Consequently, for a fixed realization of the random potential, we cannot obtain a family of time-quasi periodic localized states parameterized by the amplitude of the initial data. Addressing this limitation is a key objective of our subsequent work. Moreover, if this issue is resolved, the constraint on the magnitude of the hopping term can be further relaxed, it would only need to be smaller than a fixed absolute constant. Furthermore, the existence of time almost-periodic localized states could then also be investigated.

$\bullet$ Repeating all the procedures in this paper, it can be seen that the conclusions will not change significantly when considering an exponentially decaying hooping term.

$\bullet$ In fact, the paper ultimately obtains time quasi-periodic localized states with arbitrary power-law decay in space, rather than exponential decay. This is mainly due to technical reasons arising from the extensive use of truncation techniques for measure estimates within the framework of the nonlinear KAM theorem. If a random potential with power-law decay is considered, the truncation could be replaced by alternative techniques, thereby leading to exponentially decaying localized states.

$\bullet$
Physically, the localization and delocalization for the nonlinear Stark model have also motivated many interesting studies; for relevant works, see \cite{GS2009,KKF2009}.

\end{rem}
\subsection{New ingredient of proof}\

$\bullet$
The investigation of nonlinear localized states remains largely confined to the so-called atomic limit (see \cite{RHP25}), where the combined effect of the hopping and nonlinear terms is treated  perturbatively. Therefore, advancing beyond the atomic limit to construct time quasi-periodic localized states presents a problem of significant interest. In this work, for a fixed time-frequency dimension, we fix the strength of the hopping term and treat only the nonlinearity as a perturbation. We rigorously establish the existence of such time-quasi-periodic localized states  when the nonlinear term is sufficiently weak. To the best of our knowledge, only the very recent work  \cite{LW2024} by Liu-Wang has succeeded in transcending the atomic limit. Our analysis of the nonlinear disordered Stark model reaches a comparable conclusion.

Most research efforts remain confined to the so-called atomic limit scenario, largely due to the difficulty in obtaining detailed information about the eigenvalues of the linear part, particularly their parametric dependence, when treating the linear component as a whole. This explains why the presence of the hopping term was not considered in certain early studies \cite{Y2002,FSW1986} of nonlinear localization in lattice systems. Even for models like the well-known Maryland model, which similarly allows uniform diagonalization, this limitation forces the hopping term's magnitude to be strictly dependent on the strength of the nonlinearity.
In the present work, we introduce finite random variables as parameters and rigorously analyze the derivatives of eigenvalues with respect to these random parameters during the KAM diagonalization process. This approach enables our results to transcend the atomic limit.
However, because the magnitude of the hopping term in our eigenvalue differentiation scheme is governed by the dimension of the selected random parameters, we cannot establish the non-degeneracy of the Jacobian matrix for all random potentials. Demonstrating such non-degeneracy would require the Jacobian matrix to exhibit off-diagonal decay, a property that cannot be rigorously verified within our current linear KAM iteration framework.
Consequently, unlike the approach of Liu-Wang \cite{LW2024}, which constructs families of time quasi-periodic localized states for  \textquotedblleft good" random potentials by adjusting internal parameters, our method currently only ensures the existence of individual localized states. This outcome is analogous to the earlier result by Bourgain-Wang \cite{BW2008}. We intend to address this limitation in future research.

$\bullet$
Indeed, constructing time quasi-periodic localized states through multi-scale analysis and Newton iteration offers distinct advantages, most notably a greater tolerance for resonances. While classical KAM theory, particularly approaches based on Hamiltonian normal forms, possesses its own merits(such as enabling the analysis of linear stability of such states, albeit a secondary consideration),recent studies \cite{HSSY2024} have also shown that linear stability can be obtained through multi-scale analysis. Its principal strength, however, lies in furnishing a framework to examine the localization properties of solutions near quasi-periodic localized states. It must be acknowledged that some KAM theorems overlook this aspect, as their normal forms ultimately converge to a single solution during the iteration process.

Returning to Kolmogorov's original idea, we have refined the classical KAM iteration scheme within our nonlinear KAM theorem. This allows us not only to construct time quasi-periodic localized states, but also to derive the corresponding normal form in a neighborhood of the state, thereby establishing a systematic framework for analyzing localization properties in the vicinity of  such states. This idea has in fact already found applications in the study of partial differential equations \cite{CML2015,CLY2016}.

Although the present study does not pursue this direction further, we plan to conduct a more in-depth investigation into the localization properties near these time quasi-periodic localized states in future work, once the aforementioned parameter dependence issues have been resolved.

\subsection{Organization of the paper}\

In Section 2, we perform a KAM diagonalization for the linear part of the equation \eqref{eq1.1}, select finite-dimensional random variables as parameters, and obtain estimates for the derivatives of the eigenvalues with respect to these parameters.
In Section 3, we verify the Hamiltonian structure of the model after the linear part has been diagonalized.
In Section 4, we establish a KAM theorem for the diagonalized nonlinear stark model and show the existence of time quasi-periodic localized states exhibiting power-law decay. In Section 5. we give the detailed proof of the nonlinear KAM theorem.

\section{The eigenvalue variation}
In this section, we study how the eigenvalues of the linear part of Schr\"odinger equation \eqref{eq1.1} depend on the random variable $v_n$,  in order to derive estimates for the derivatives of these eigenvalues with respect to the random variables, as well as the decay estimates of the unitary transformation required for diagonalizing the linear part. This lays the groundwork for establishing the KAM theorem in Section 4.

For convenience, we may regard the linear part of the nonlinear equation \eqref{eq1.1} as a liner Schr\"odinger operator $\mathcal{L}$, namely
\begin{equation}\label{s}
\mathcal{L}=\mathcal{L}_0+V,
\end{equation}
with
$$(\mathcal{L}_0)u_n=\delta(u_{n+1}+u_{n-1})+nu_n,\quad (Vu)_n=v_nu_n.$$

Then, we have the following theorem.
\begin{thm}\label{yubei}
For Schr\"odinger operator \eqref{s}, let $\delta \leq \frac{1}{55}$ and $v=\{v_n\}_{n\in \mathbb{Z}}$ be a family of independent identically distributed (i.i.d) random variables in $\left[-\frac{1}{10},\frac{1}{10}\right]$. Then, the linear operator $\mathcal{L}$ has discrete pure point spectrum. Moreover, there is a unitary transformation $\mathrm{G}:\ell^2(\mathbb{Z})\rightarrow \ell^2(\mathbb{Z})$ such that
\begin{equation}\mathrm{G}^* \mathcal{L} \mathrm{G}={\rm diag}\big\{ d_n: n\in\mathbb{Z}\big\}={\rm diag}\big\{ n+f_n(v): n\in\mathbb{Z}\big\},\end{equation}
\begin{equation}
|d_n-d_m|\geq \frac{2}{3}(m\neq n),\quad \left|\frac{\partial f_n}{\partial v_m}-\delta_{mn}\right|\le \frac{26}{15}C(\delta),\quad \forall m,n\in\mathbb{Z},
 \end{equation}
 \begin{equation}
|(\mathrm{G}-\mathbb{I})_{mn}|+\frac{1}{10}\Big| \frac{\partial(\mathrm{G}-\mathbb{I})_{mn}}{\partial v_j}\Big|\le \frac{19}{9} \mathrm{C}(\delta)e^{-\frac{1}{8}|m-n|},
\end{equation}
where  $\mathrm{C}(\delta)=4\delta e^{\frac{1}{8}}e^{4\delta e^{\frac{1}{8}}}$.
\end{thm}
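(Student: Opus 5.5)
We diagonalize $\mathcal{L}$ by a linear KAM (Jacobi-type) iteration, treating the hopping $\delta\Delta$ as the perturbation of the diagonal operator $D_0=\mathrm{diag}\{n+v_n\}$. The diagonal entries are already $1-\frac15=\frac45$-separated. We work in the weighted matrix norm
\begin{equation*}
\|A\|_\rho=\sup_{m,n}|A_{mn}|e^{\rho|m-n|},
\end{equation*}
together with the same norm applied to $\partial A/\partial v_j$. The off-diagonal matrix $P_0=\delta(S+S^*)$ satisfies $\|P_0\|_{\rho_0}\le \delta e^{\rho_0}$. The natural choice is $\rho_0=\frac14$, shrinking to $\frac18$ in the limit, which explains the factor $\delta e^{1/8}$ in $\mathrm{C}(\delta)$.

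At step $k$ we have $D_k+P_k$ with $D_k=\mathrm{diag}\{n+v_n+f^{(k)}_n\}$, off-diagonal $P_k$, and $\|P_k\|_{\rho_k}\le\varepsilon_k$. We solve the homological equation
\begin{equation*}
[D_k,F_k]=P_k,\qquad (F_k)_{mn}=\frac{(P_k)_{mn}}{d^{(k)}_m-d^{(k)}_n}\quad (m\neq n).
\end{equation*}
We then set $\mathrm{G}_{k}=e^{F_k}$ (which is unitary because $F_k$ is anti-Hermitian after the appropriate factor of ${\bf i}$) and conjugate:
\begin{equation*}
e^{-F_k}(D_k+P_k)e^{F_k}=D_k+\mathrm{diag}(P_k)+O(\|F_k\|\|P_k\|).
\end{equation*}
The new diagonal absorbs the diagonal part of the quadratic remainder, and $P_{k+1}$ is its off-diagonal part.

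Inductively we keep $|d^{(k)}_m-d^{(k)}_n|\ge\frac23|m-n|$, or at least $\ge\frac23$. The separation degrades only by $\sum_k 2\varepsilon_k$, which is small for $\delta\le\frac1{55}$. This gives $\|F_k\|_{\rho_k}\le\frac32\varepsilon_k$. Since $\sum_l e^{-\sigma|l|}\lesssim\sigma^{-1}$, multiplying weighted matrices loses a factor $\sigma_k^{-1}$ when passing from $\rho_k$ to $\rho_{k+1}=\rho_k-\sigma_k$. The resulting quadratic recursion $\varepsilon_{k+1}\lesssim\sigma_k^{-1}\varepsilon_k^2$ converges super-exponentially for $\sigma_k=\frac18 2^{-k-1}$. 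The infinite product $\mathrm{G}=\prod_k e^{F_k}$ converges, with
\begin{equation*}
\|\mathrm{G}-\mathbb{I}\|_{1/8}\le e^{\sum_k\|F_k\|}-1\lesssim \delta e^{1/8}e^{4\delta e^{1/8}}.
\end{equation*}
Pure point spectrum follows because $\mathrm{G}$ is unitary and diagonalizes $\mathcal{L}$, and its columns are exponentially localized eigenvectors that form a complete basis. The eigenvalues are the limits $d_n=n+v_n+\sum_k(\text{diag corrections})$, so $f_n=v_n+O(\mathrm{C}(\delta))$.

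For the derivative bounds we differentiate the homological equation in $v_j$:
\begin{equation*}
\partial_{v_j}(F_k)_{mn}=\frac{\partial_{v_j}(P_k)_{mn}}{d_m-d_n}-\frac{(P_k)_{mn}\,(\partial_{v_j}d_m-\partial_{v_j}d_n)}{(d_m-d_n)^2}.
\end{equation*}
Inductively $|\partial_{v_j}d^{(k)}_n-\delta_{jn}|\le$ small, so $|\partial_{v_j}d_m-\partial_{v_j}d_n|\le 2+$small. We carry a second induction hypothesis $\|\partial_{v_j}P_k\|_{\rho_k}\le 10\,\varepsilon_k'$ alongside $\varepsilon_k$. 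This hypothesis matches the weight $\frac1{10}$ in the statement: the derivative may be up to $10$ times larger than the function itself. The diagonal corrections $\mathrm{diag}(\cdot)$ then have derivatives bounded by the same series, which yields $|\partial f_n/\partial v_m-\delta_{mn}|\le\frac{26}{15}\mathrm{C}(\delta)$. Note that $P_0=\delta\Delta$ does not depend on $v$, so $\partial_v P_0=0$ and the derivatives enter only through $d^{(k)}$.

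The main obstacle is bookkeeping the explicit constants, namely $\frac{19}{9}$, $\frac{26}{15}$ and $\mathrm{C}(\delta)=4\delta e^{1/8}e^{4\delta e^{1/8}}$, uniformly over infinitely many steps and in the sup-norm over $\mathbb{Z}$. This requires the off-diagonal kernel sums $\sum_l e^{-\rho|m-l|-\rho|l-n|}$ to be controlled without losing $|m-n|$-growth. I would first try to get a clean geometric series by fixing $\rho_k\downarrow\frac18$ and using $\sum_{l}e^{-\frac18|l|}\le 17$ at worst. The smallness $\delta\le\frac1{55}$ should make $17\cdot\frac32\,\delta e^{1/4}<\frac12$, which closes the induction with room for the derivative terms. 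The final constants would then be obtained by summing these geometric series exactly.
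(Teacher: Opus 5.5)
\paragraph{Verdict: genuine gap.} Your skeleton is a Jacobi/KAM iteration on $\mathrm{diag}\{n+v_n\}+\delta\Delta$ that uses the gaps $\ge\frac45$ and differentiates the homological equation in $v_j$. It is qualitatively sound and would prove a version of the theorem for $\delta$ small enough, with unspecified constants. The statement, however, is quantitative: $\delta\le\frac1{55}$, gap $\frac23$, and the bounds $\frac{26}{15}\mathrm{C}(\delta)$ and $\frac{19}{9}\mathrm{C}(\delta)e^{-\frac18|m-n|}$. The step where you close the induction fails.

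\begin{itemize}
\item Your norm $\sup_{m,n}|A_{mn}|e^{\rho|m-n|}$ is not submultiplicative. Indeed, $\sum_l e^{-\rho(|m-l|+|l-n|)}=(|m-n|+\coth\rho)\,e^{-\rho|m-n|}$, which is unbounded relative to $e^{-\rho|m-n|}$.
\item Consequently each step costs a factor of order $\sigma_k^{-1}$, with $\sigma_k=\rho_k-\rho_{k+1}\to0$. No uniform factor $\sum_l e^{-\frac18|l|}\le 17$ is available along the whole iteration.
\item The same defect invalidates your estimate $\|\prod_k e^{F_k}-\mathbb{I}\|_{1/8}\le e^{\sum_k\|F_k\|}-1$.
\item Even granting the factor $17$, your closing inequality $17\cdot\frac32\,\delta e^{1/4}<\frac12$ is false at $\delta=\frac1{55}$: the left side is about $0.595$.
\item The $e^{1/8}$ in $\mathrm{C}(\delta)$ is not the endpoint of $\rho_k\downarrow\frac18$. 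Nothing in your scheme produces $\mathrm{C}(\delta)$ or the constants $\frac{26}{15}$ and $\frac{19}{9}$.
\item Minor sign slip: with the conjugation $e^{-F_k}(\cdot)e^{F_k}$ you need $[\mathrm{D}_k,F_k]=-P_k$. Otherwise $P_k$ is doubled rather than removed.
\end{itemize}

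\paragraph{What the paper does instead.} It uses two ideas.

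First, it removes the hopping exactly. Since $\mathrm{D}=\mathrm{diag}\{n\}$ has $d_m-d_n=m-n$, the generator $\mathrm{W}_{mn}=\Delta_{mn}/(m-n)$ is Toeplitz and commutes with $\Delta$. Hence $e^{\delta\mathrm{W}}(\mathrm{D}+\delta\Delta)e^{-\delta\mathrm{W}}=\mathrm{D}$ with no remainder. The only perturbation left for the KAM step is $e^{\delta\mathrm{W}}\mathcal{V}e^{-\delta\mathrm{W}}-\mathcal{V}$. Its size is at most $2\delta\|\mathrm{W}\|_{1/8}e^{2\delta\|\mathrm{W}\|_{1/8}}\|\mathcal{V}\|_{1/8}=\mathrm{C}(\delta)\|\mathcal{V}\|_{1/8}$, using $\|\mathrm{W}\|_{1/8}=2e^{1/8}$. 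This is exactly where $\mathrm{C}(\delta)$ comes from. It also carries an extra factor $\|\mathcal{V}\|_{1/8}\le\frac1{10}$, which your starting perturbation $\delta\Delta$ lacks.

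Second, it measures matrices in $\|A\|_r=\sum_l e^{r|l|}\sup_{m-n=l}|A_{mn}|$, and derivatives in $\|A\|^\alpha_r=\|A\|_r+\alpha\|\partial_vA\|_r$ with $\alpha=\frac1{10}$. Both norms are submultiplicative (Lemma~\ref{chen}). So the weight $r=\frac18$ stays fixed and there is no $\sigma_k^{-1}$ loss. The recursion $\|P_+\|\le\frac{117}{8}\|P\|^2e^{\frac{78}{8}\|P\|}$ then closes once $\epsilon_0=2\mathrm{C}(\delta)\alpha\le\frac1{55}$. The derivative bound for $f_n$ follows by summing $\|\partial_v P^0\|\le\mathrm{C}(\delta)$ and $\|\partial_vP^k\|\le\epsilon_k/\alpha$.
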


The diagonalization of such Schr\"odinger operators has been thoroughly investigated in \cite{SW2024}. In this section, we revisit this diagonalization procedure to derive estimates for the derivatives of eigenvalues with respect to the random variable $v_n$, thereby laying the groundwork for the non-degeneracy condition on frequencies in our application of KAM theorem in Section 4.

\begin{defn}\label{defn1}
For a matrix $\mathrm{A}=(\mathrm{A}_{mn})_{m,n\in\mathbb{Z}}$, given $r>0$, we say $\mathrm{A}\in \mathcal{M}_r$ if and only if
$$\Vert \mathrm{A}\Vert_{r}:=\sum_{l\in \mathbb{Z}}e^{r|l|}\sup_{m-n=l}|\mathrm{A}_{mn}|<\infty.$$
\end{defn}

\begin{rem}It is readily verified that if an operator
$\mathrm{A}$ belongs to
$\mathcal{M}_r$, then it is also a bounded operator on the standard
$\ell^2$ space. Consequently, self-adjoint and unitary operators can be defined with respect to the canonical inner product on $\ell^2$.
\end{rem}
\begin{defn}\label{defn22}
For $0<\alpha<1$, If an operator is differentiable about the parameter $v\in \Pi:=\left[-\frac{1}{10},\frac{1}{10}\right]^{\mathbb{Z}}$, we say $\mathrm{A}\in \mathcal{M}^\alpha_r$ if and only if
$$\Vert \mathrm{A}\Vert^\alpha_{r}=\sup_{v\in \Pi}\Vert \mathrm{A}\Vert_{r}+\alpha\sup_{v\in\Pi}\left\Vert \frac{\partial \mathrm{A}}{\partial v}\right\Vert_{r}<\infty,$$
where
$$\left\Vert   \frac{\partial \mathrm{A}}{\partial v}\right\Vert_r:=\sup_{j\in\mathbb{Z}}\left\Vert   \frac{\partial \mathrm{A}}{\partial v_j}\right\Vert_r.$$
\end{defn}

\begin{lem}\label{chen}
Let $\mathrm{A},\mathrm{B}\in\mathcal{M}_r$. Then $\mathrm{AB}\in \mathcal{M}_r$ and
$$\Vert \mathrm{AB}\Vert_r\le \Vert \mathrm{A}\Vert_r\Vert \mathrm{B}\Vert_r.$$
Let $\mathrm{A}, \mathrm{B}\in\mathcal{M}^\alpha_r$. Then $\mathrm{A}\mathrm{B}\in \mathcal{M}^\alpha_r$ and
\begin{align}\label{ral}
\Vert \mathrm{AB}\Vert^\alpha_r\le \Vert \mathrm{A}\Vert^\alpha_r\Vert \mathrm{B}\Vert^\alpha_r.
\end{align}
\end{lem}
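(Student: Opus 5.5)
The first inequality follows by bounding the entries of the product and summing over diagonals. For $l\in\mathbb{Z}$ and any $m,n$ with $m-n=l$, write $(\mathrm{AB})_{mn}=\sum_{k}\mathrm{A}_{mk}\mathrm{B}_{kn}$ and set $l_1=m-k$, $l_2=k-n$, so that $l_1+l_2=l$. Then
$$\sup_{m-n=l}|(\mathrm{AB})_{mn}|\le \sum_{l_1+l_2=l}\Big(\sup_{m-k=l_1}|\mathrm{A}_{mk}|\Big)\Big(\sup_{k-n=l_2}|\mathrm{B}_{kn}|\Big).$$
The series defining $(\mathrm{AB})_{mn}$ converges absolutely, since $\sum_{l_1}\sup_{m-k=l_1}|\mathrm{A}_{mk}|\le\Vert\mathrm{A}\Vert_r$ and the entries of $\mathrm{B}$ are bounded by $\Vert\mathrm{B}\Vert_r$. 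Multiply by $e^{r|l|}$ and use the triangle inequality $e^{r|l|}\le e^{r|l_1|}e^{r|l_2|}$. Summing over $l$ and exchanging the order of summation, which is legitimate because every term is nonnegative, the double sum factorizes as a discrete convolution estimate and gives $\Vert\mathrm{AB}\Vert_r\le\Vert\mathrm{A}\Vert_r\Vert\mathrm{B}\Vert_r$.

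For the weighted norm, I first justify the Leibniz rule
$$\frac{\partial(\mathrm{AB})}{\partial v_j}=\frac{\partial\mathrm{A}}{\partial v_j}\mathrm{B}+\mathrm{A}\frac{\partial\mathrm{B}}{\partial v_j}.$$
Entrywise, this is the termwise differentiation of $\sum_k\mathrm{A}_{mk}\mathrm{B}_{kn}$. It is justified because the termwise-differentiated series is dominated, uniformly in $v\in\Pi$, by the summable quantities $\sup_v\Vert\partial\mathrm{A}/\partial v_j\Vert_r$ and $\sup_v\Vert\mathrm{B}\Vert_r$, and similarly with the roles of $\mathrm{A}$ and $\mathrm{B}$ exchanged. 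Hence differentiation under the sum is allowed by uniform convergence of the derivative series. This step is the only point needing care, and the assumption that $\mathrm{A},\mathrm{B}\in\mathcal{M}^\alpha_r$ supplies exactly this uniform domination.

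The first part then gives
$$\Big\Vert\frac{\partial(\mathrm{AB})}{\partial v_j}\Big\Vert_r\le\Big\Vert\frac{\partial\mathrm{A}}{\partial v_j}\Big\Vert_r\Vert\mathrm{B}\Vert_r+\Vert\mathrm{A}\Vert_r\Big\Vert\frac{\partial\mathrm{B}}{\partial v_j}\Big\Vert_r.$$
Taking the supremum over $j$ and over $v\in\Pi$, abbreviate $a_0=\sup_v\Vert\mathrm{A}\Vert_r$, $a_1=\sup_v\Vert\partial\mathrm{A}/\partial v\Vert_r$, and define $b_0,b_1$ likewise for $\mathrm{B}$. Then
$$\Vert\mathrm{AB}\Vert^\alpha_r\le a_0b_0+\alpha(a_1b_0+a_0b_1)\le (a_0+\alpha a_1)(b_0+\alpha b_1)=\Vert\mathrm{A}\Vert^\alpha_r\Vert\mathrm{B}\Vert^\alpha_r.$$
The middle inequality holds because the omitted term $\alpha^2a_1b_1$ is nonnegative. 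Here I used that the supremum of a product is at most the product of the suprema. This proves \eqref{ral}.
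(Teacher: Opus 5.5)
Your proof is correct and follows the paper's route. The first part is the same diagonal-wise convolution bound using $e^{r|l|}\le e^{r|l_1|}e^{r|l_2|}$, and for the weighted case, which the paper dismisses as ``similar'', you supply the Leibniz rule and $a_0b_0+\alpha(a_1b_0+a_0b_1)\le(a_0+\alpha a_1)(b_0+\alpha b_1)$.

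One small point of precision: uniform convergence of the differentiated series comes from the uniform tail bound $\sum_{|l_1|>N}\sup_{m-k=l_1}|\partial_{v_j}\mathrm{A}_{mk}(v)|\le e^{-rN}\sup_{v}\Vert\partial_{v_j}\mathrm{A}\Vert_r$, i.e.\ from $r>0$. A bound on $\sup_v$ of the whole sum would not give it on its own, so this step is worth stating explicitly.
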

\begin{proof}
By definition \ref{defn1}, one has
\begin{align*}
\Vert \mathrm{AB}\Vert_{r}=&\sum_{l\in \mathbb{Z}}e^{r|l|}\sup_{m-n=l}|(\mathrm{AB})_{mn}|\\
=&\sum_{l\in \mathbb{Z}}e^{r|l|}\sup_{m-n=l}|\sum_{k\in\mathbb{Z}}\mathrm{A}_{mk}\mathrm{B}_{kn}|\\
\le &\sum_{l,k\in \mathbb{Z}} e^{r|l|} \sup_{m-n=l}|\mathrm{A}_{mk}\mathrm{B}_{kn}|\\
\le &\sum_{l,j\in \mathbb{Z}} e^{r(|j|+|l-j|})\sup_{m-k=j}|\mathrm{A}_{mk}|\sup_{k-n=l-j}|\mathrm{B}_{kn}|\\
\le &\Vert \mathrm{A}\Vert_{r} \Vert \mathrm{B}\Vert_{r}.
\end{align*}
Similarly, one can show \eqref{ral}.
\end{proof}

 Let
 $$\mathrm{D}={\rm diag}\big\{n:n\in \mathbb{Z}\big\},\quad \Delta=\left\{\begin{array}{ll}
1,\quad &n-m=\pm 1,\\
 0,\quad &{\rm otherwise}.
 \end{array}
 \right.$$
 Then, one has
 $$\mathcal{L}_0=\mathrm{D}+\delta \Delta.$$
 In the following, let $r=\frac{1}{8}$, then $\Vert \Delta\Vert_{\frac{1}{8}}=2e^{\frac{1}{8}}$.

\begin{lem}\label{Uest}
There is a unitary transformation $\mathrm{U}\in \mathcal{M}_{\frac{1}{8}}$ such that
$$\mathrm{U}^*\mathcal{ L}_0\mathrm{U}=\mathrm{D},$$
$$\quad \Vert \mathrm{U}\Vert_{\frac{1}{8}}\le e^{\delta\Vert \Delta\Vert_{\frac{1}{8}}}=e^{2 \delta e^{\frac{1}{8}}},$$
\begin{equation*}
\Vert \mathrm{U}-\mathbb{I}\Vert_{\frac{1}{8}} \leq  2\delta e^{\frac{1}{8}}e^{2\delta e^{\frac{1}{8}}}.
\end{equation*}
\end{lem}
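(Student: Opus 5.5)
The plan is to avoid any iteration and write $\mathrm{U}$ explicitly as the exponential of an antisymmetric operator, using the fact that the free Stark operator has an exact algebraic structure. Let $\mathrm{T}$ denote the shift $(\mathrm{T}u)_n=u_{n+1}$, so that $\Delta=\mathrm{T}+\mathrm{T}^{-1}$. A direct entrywise check gives the commutation relations $[\mathrm{D},\mathrm{T}]=-\mathrm{T}$ and $[\mathrm{D},\mathrm{T}^{-1}]=\mathrm{T}^{-1}$. I would set
$$\mathrm{S}:=\mathrm{T}^{-1}-\mathrm{T},$$
a real antisymmetric matrix supported on the two off-diagonals $m-n=\pm1$. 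It satisfies:
\begin{itemize}
\item $[\mathrm{S},\mathrm{D}]=-\Delta$;
\item $[\mathrm{S},\Delta]=0$, since both are polynomials in $\mathrm{T},\mathrm{T}^{-1}$;
\item $\Vert \mathrm{S}\Vert_{\frac18}=2e^{\frac18}=\Vert\Delta\Vert_{\frac18}$.
\end{itemize}

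Define $\mathrm{U}:=e^{-\delta \mathrm{S}}=\sum_{k\ge0}\frac{(-\delta \mathrm{S})^k}{k!}$.

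\textbf{Convergence, unitarity and the norm bounds.} By Lemma \ref{chen}, $\mathcal{M}_{\frac18}$ is a Banach algebra with $\Vert \mathrm{S}^k\Vert_{\frac18}\le \Vert \mathrm{S}\Vert_{\frac18}^k$. Hence the series converges absolutely in $\mathcal{M}_{\frac18}$, and
$$\Vert \mathrm{U}\Vert_{\frac18}\le e^{\delta\Vert \mathrm{S}\Vert_{\frac18}}=e^{2\delta e^{\frac18}}.$$
For the second bound I use $\sum_{k\ge1}x^k/k!\le xe^x$ with $x=2\delta e^{\frac18}$, which gives
$$\Vert \mathrm{U}-\mathbb{I}\Vert_{\frac18}\le 2\delta e^{\frac18}e^{2\delta e^{\frac18}}.$$
Since $\mathrm{S}$ is bounded on $\ell^2$ and $\mathrm{S}^*=-\mathrm{S}$, we get $\mathrm{U}^*=e^{\delta \mathrm{S}}=\mathrm{U}^{-1}$, so $\mathrm{U}$ is unitary.

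\textbf{The conjugation identity.} We have $\mathrm{U}^*\mathcal{L}_0\mathrm{U}=e^{\delta \mathrm{S}}(\mathrm{D}+\delta\Delta)e^{-\delta \mathrm{S}}$. I would expand this using the adjoint series
$$e^{\delta \mathrm{S}}\mathrm{X}e^{-\delta \mathrm{S}}=\sum_{k\ge0}\frac{\delta^k}{k!}\mathrm{ad}_{\mathrm{S}}^k\mathrm{X}.$$
For $\mathrm{X}=\mathrm{D}$ the series terminates after the linear term, because $\mathrm{ad}_{\mathrm{S}}\mathrm{D}=-\Delta$ and $\mathrm{ad}_{\mathrm{S}}\Delta=0$. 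This gives $e^{\delta \mathrm{S}}\mathrm{D}e^{-\delta \mathrm{S}}=\mathrm{D}-\delta\Delta$. Similarly $e^{\delta \mathrm{S}}\Delta e^{-\delta \mathrm{S}}=\Delta$. Summing the two, $\mathrm{U}^*\mathcal{L}_0\mathrm{U}=\mathrm{D}$.

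\textbf{Main obstacle.} The only delicate point is that $\mathrm{D}$ is unbounded, so the adjoint expansion is not automatically justified for $\mathrm{X}=\mathrm{D}$. I would handle this by working entrywise on finitely supported vectors. Set $F(t)=e^{t\mathrm{S}}\mathrm{D}e^{-t\mathrm{S}}$ on the common domain; this domain is preserved because $\mathrm{S}$ is banded and the exponential series has exponentially decaying entries. Then $F'(t)=e^{t\mathrm{S}}[\mathrm{S},\mathrm{D}]e^{-t\mathrm{S}}=-\Delta$ with $F(0)=\mathrm{D}$, so $F(t)=\mathrm{D}-t\Delta$. Alternatively, one can verify entrywise that $\mathrm{D}\mathrm{U}=\mathrm{U}(\mathrm{D}+\ldots)$ using the exponential off-diagonal decay of $\mathrm{U}$. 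In either form this is a routine domain check rather than a real difficulty.
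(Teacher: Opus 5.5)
Your proposal is correct and is essentially the paper's proof: your $\mathrm{S}=\mathrm{T}^{-1}-\mathrm{T}$ is exactly the paper's $\mathrm{W}$, defined by $\mathrm{W}_{mn}=\Delta_{mn}/(m-n)$ as the solution of $[\mathrm{W},\mathrm{D}]=-\Delta$. Like you, the paper truncates the adjoint series because $\mathrm{W}$ and $\Delta$ commute (it cites that Toeplitz matrices commute), and it gets the same two norm bounds from $\Vert\mathrm{W}\Vert_{\frac18}=2e^{\frac18}$; your ODE argument for $t\mapsto e^{t\mathrm{S}}\mathrm{D}e^{-t\mathrm{S}}$ is a useful addition, since the paper applies the adjoint expansion to the unbounded $\mathrm{D}$ without comment.
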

 \begin{proof}
 Set $\mathrm{U}=e^{-\delta \mathrm{W}}$, then one has
 \begin{align}\label{1}
 e^{\delta \mathrm{W}}\mathcal{L}_0e^{-\delta \mathrm{W}}=&\mathrm{D}+[\delta \mathrm{W},\mathrm{D}]+\delta \Delta+\sum_{n=2}^\infty \frac{\mathbf{ad}^n_{\delta \mathrm{W}}(\mathrm{D})}{n!}+\sum_{n=1}^\infty \frac{\mathbf{ad}^n_{\delta \mathrm{W}}(\delta\Delta)}{n!}.
 \end{align}
 In the following, we will solve the equation
\begin{equation} \label{2}
 [\delta \mathrm{W},\mathrm{D}]+\delta \Delta=0.
 \end{equation}
By computing the matrix element of the above equation, one can  get
 \begin{align}\label{to}
 \mathrm{W}_{mn}=\frac{\Delta_{mn}}{m-n}, \quad \forall m,n \in \mathbb{Z},
 \end{align}
 which implies that
$$ \mathrm{W}_{mn}=\left\{\begin{array}{ll}
 \frac{1}{m-n},\quad &n-m=\pm 1,\\
\ \ 0, &{\rm otherwise}.
 \end{array}
 \right.$$
Finally, one has
 $$\Vert \mathrm{W}\Vert_{\frac{1}{8}}=\Vert \Delta\Vert_{\frac{1}{8}}=2e^{\frac{1}{8}}.$$
Obviously, $\Delta$ is a T\"oplitz matrix, by \eqref{to}, one sees that $\mathrm{W}$ is also a T\"oplitz matrix. Since the commutator of two T\"oplitz matrices vanishes( see the Lemma 2.6 in \cite{HS2025}), one has
  \begin{align}\label{yao}
\mathbf{ad}^{n}_{ \delta \mathrm{W}}(\delta\Delta)=0,\quad n\geq 1.
 \end{align}
 and
 \begin{equation}\label{3}
 \mathbf{ad}^n_{ \delta \mathrm{W}}(\mathrm{D})=\mathbf{ad}^{n-1}_{\delta \mathrm{W}}(-\delta\Delta)=0,\quad n\geq 2.
 \end{equation}
 Substituting \eqref{2}, \eqref{yao} and \eqref{3} into \eqref{1}, one  gets
 $$e^{\delta \mathrm{W}}\mathcal{L}_0e^{- \delta \mathrm{W}}=\mathrm{D},$$
where
\begin{equation}\Vert \mathrm{U}\Vert_{\frac{1}{8}}=\Vert e^{-\delta \mathrm{W}}\Vert_{\frac{1}{8}}\le e^{\delta \Vert \mathrm{W}\Vert_{\frac{1}{8}}}\le e^{2\delta e^{\frac{1}{8}}},\footnote{It is easy to show that $e^{2\delta e^{\frac{1}{8}}} \leq \frac{21}{20}$, when $\delta \leq \frac{1}{55}$. }\end{equation}
\begin{equation}
\begin{split}
\Vert \mathrm{U}-\mathbb{I}\Vert_{\frac{1}{8}} &\leq \sum_{n=1}^\infty  \frac{(\delta\Vert  \mathrm{W}\Vert_{\frac{1}{8}})^n}{n!}\leq \delta \|\mathrm{W}\|_{\frac{1}{8}} \sum_{n=1}^\infty \frac{(\delta\Vert  \mathrm{W}\Vert_{\frac{1}{8}})^{n-1}}{n!} \\
&\leq \delta \|\mathrm{W}\|_{\frac{1}{8}} e^{\delta\|\mathrm{W}\|_{\frac{1}{8}}}\\
&\leq 2\delta e^{\frac{1}{8}}e^{2\delta e^{\frac{1}{8}}}.
\end{split}
\end{equation}
 \end{proof}

It is straightforward to verify that the operator $\mathrm{W}$ is skew-adjoint, while $\mathrm{U}=e^{-\delta\mathrm{W}}$ is unitary. Consequently, the operator
$\mathcal{L}$ is conjugate to

\begin{align}
 \mathcal{L}_{new}=&\mathrm{D}+e^{\delta \mathrm{W}}\mathcal{V}e^{-\delta \mathrm{W}}\nonumber\\
 =&\mathrm{D}+\mathcal{V}+\sum_{n=1}^\infty  \frac{\mathbf{ad}^n_{\delta \mathrm{W}}(\mathcal{V})}{n!}\nonumber\\
 =&\mathrm{D}_{new}+\mathrm{P}_{new},\label{zhuy}
  \end{align}
 where
 $$\mathrm{D}_{new}=\mathrm{D}+\mathcal{V},\quad \mathrm{P}_{new}=\sum_{n=1}^\infty  \frac{\mathbf{ad}^n_{\delta \mathrm{W}}(\mathcal{V})}{n!}.$$

 It follows that
 \begin{align}
 \Vert  \mathrm{P}_{new}\Vert_{\frac{1}{8}}\le &\sum_{n=1}^\infty  \frac{\Vert \mathbf{ad}^n_{\delta \mathrm{W}}(\mathcal{V})\Vert_{\frac{1}{8}}}{n!}
 \le \sum_{n=1}^\infty  \frac{(2\delta\Vert  \mathrm{W}\Vert_{\frac{1}{8}})^n\Vert \mathcal{V} \Vert_{\frac{1}{8}}}{n!}\nonumber\\
 \le & 2\delta \Vert  \mathrm{W}\Vert_{\frac{1}{8}}\Vert \mathcal{V} \Vert_{\frac{1}{8}} \sum_{n=1}^\infty  \frac{(2\delta\Vert  \mathrm{W}\Vert_{\frac{1}{8}})^{n-1}}{n!}\nonumber\\
 \le &  2\delta \Vert  \mathrm{W}\Vert_{\frac{1}{8}}\Vert \mathcal{V} \Vert_{\frac{1}{8}} e^{2\delta \Vert  \mathrm{W}\Vert_{\frac{1}{8}}}\\
 \le & 4\delta e^{\frac{1}{8}}e^{4\delta e^{\frac{1}{8}}}\Vert \mathcal{V} \Vert_{\frac{1}{8}} .\label{epnew}
 \end{align}
 Let $\mathrm{C}(\delta)=4\delta e^{\frac{1}{8}}e^{4\delta e^{\frac{1}{8}}}$ \footnote{A simple calculation shows that $\mathrm{C}(\delta)<\frac{1}{11}$ holds when $\delta < \frac{1}{55}$.},  since $\mathrm{W}$ is independent of $v_n(n\in\mathbb{Z})$, one has
 \begin{align}
\left\Vert   \frac{\partial \mathrm{P}_{new}}{\partial v}\right\Vert_{\frac{1}{8}}&=\sup_{m\in\mathbb{Z}}\left\Vert   \frac{\partial \mathrm{P}_{new}}{\partial v_m}\right\Vert_{\frac{1}{8}}\nonumber\\
\le &\sup_{m\in\mathbb{Z}}\sum_{n=1}^\infty  \frac{\left\Vert \mathbf{ad}^n_{\delta \mathrm{W}}(\frac{\partial \mathcal{V}}{\partial v_m})\right\Vert_{\frac{1}{8}}}{n!}
 \le \sum_{n=1}^\infty  \frac{(2\delta\Vert  \mathrm{W}\Vert_{\frac{1}{8}})^n\sup_{m\in\mathbb{Z}}\left\Vert \frac{\partial \mathcal{V}}{\partial v_m} \right\Vert_{\frac{1}{8}}}{n!}\nonumber\\
 \le & 2\delta \Vert  \mathrm{W}\Vert_{\frac{1}{8}}\sup_{m\in\mathbb{Z}}\left\Vert \frac{\partial \mathcal{ V}}{\partial v_m} \right\Vert_{\frac{1}{8}} \sum_{n=1}^\infty  \frac{(2\delta\Vert  \mathrm{W}\Vert_{\frac{1}{8}})^{n-1}}{n!}\nonumber\\
 \le &  2\delta \Vert  \mathrm{W}\Vert_{\frac{1}{8}}\sup_{m\in\mathbb{Z}}\left\Vert \frac{\partial \mathcal{V}}{\partial v_m} \right\Vert_{\frac{1}{8}} e^{2\delta \Vert  \mathrm{W}\Vert_{\frac{1}{8}}}\\
 \le & \mathrm{C}(\delta).\label{epnew1}
 \end{align}



Next, we will diagonalize the operator $\mathcal{L}_{new}$ via a KAM-type iteration. Since the process does not involve small divisors, the iteration becomes straightforward. Instead of merely assuming the perturbation is sufficiently small, we explicitly quantify its size. This clarifies the dependence of perturbation magnitudes in constructing quasi-periodic solutions-an essential feature of our analysis.

At first, we state two key quantitative lemmas.
 \begin{lem}\label{jie}
 Let $\mathrm{D}$ be a diagonal operator with $\mathrm{D}_{nn}=\mathrm{d}_n\in\mathbb{R}$ and
 \begin{equation}\label{cond}
 |\mathrm{d}_n-\mathrm{d}_m|\geq \frac{2}{3},\quad  |\partial_{v_j}(\mathrm{d}_n-\mathrm{d}_m)|\le \frac{3}{2},\quad n\neq m.
 \end{equation}
 Given $\mathrm{P}\in\mathcal{M}^\alpha_r$, there exists $\mathrm{W}\in\mathcal{M}^\alpha_r$ solving
 \begin{equation}\label{hoo}
 [\mathrm{W},\mathrm{D}]+\mathrm{P}-{\rm diag} \mathrm{P}=0
 \end{equation}
 and satisfying
 $$\Vert \mathrm{W}\Vert^\alpha_r\le \frac{39}{8} \Vert \mathrm{P}\Vert^\alpha_r.$$
Moreover, if $\mathrm{P}$ is a self-adjoint operator, then $\mathrm{W}$ is an skew-adjoint operator.
 \end{lem}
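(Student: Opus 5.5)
We solve the homological equation \eqref{hoo} entrywise. Since $\mathrm{D}$ is diagonal, $([\mathrm{W},\mathrm{D}])_{mn}=\mathrm{W}_{mn}(\mathrm{d}_n-\mathrm{d}_m)$, so \eqref{hoo} is equivalent to setting
$$\mathrm{W}_{mn}=\frac{\mathrm{P}_{mn}}{\mathrm{d}_m-\mathrm{d}_n}\quad (m\neq n),\qquad \mathrm{W}_{nn}=0 .$$
Diagonal entries are unconstrained because $\mathrm{P}-{\rm diag}\,\mathrm{P}$ has zero diagonal. The first condition in \eqref{cond} gives $|\mathrm{W}_{mn}|\le \frac{3}{2}|\mathrm{P}_{mn}|$ for all $m,n$. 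Taking the supremum over $m-n=l$ and summing with weight $e^{r|l|}$ yields $\sup_v\Vert \mathrm{W}\Vert_r\le \frac32\sup_v\Vert \mathrm{P}\Vert_r$.

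For the derivative part, we differentiate the explicit formula with respect to $v_j$:
$$\partial_{v_j}\mathrm{W}_{mn}=\frac{\partial_{v_j}\mathrm{P}_{mn}}{\mathrm{d}_m-\mathrm{d}_n}-\frac{\mathrm{P}_{mn}\,\partial_{v_j}(\mathrm{d}_m-\mathrm{d}_n)}{(\mathrm{d}_m-\mathrm{d}_n)^2}.$$
Both conditions in \eqref{cond} then give
$$|\partial_{v_j}\mathrm{W}_{mn}|\le \tfrac32|\partial_{v_j}\mathrm{P}_{mn}|+\tfrac94\cdot\tfrac32|\mathrm{P}_{mn}|=\tfrac32|\partial_{v_j}\mathrm{P}_{mn}|+\tfrac{27}{8}|\mathrm{P}_{mn}|.$$
Summing with weights and taking $\sup_j$ and $\sup_v$ gives $\Vert\partial_v \mathrm{W}\Vert_r\le \frac32\Vert\partial_v\mathrm{P}\Vert_r+\frac{27}{8}\Vert \mathrm{P}\Vert_r$. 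Hence
$$\Vert \mathrm{W}\Vert_r^\alpha\le \tfrac32\sup\Vert \mathrm{P}\Vert_r+\tfrac{27}{8}\alpha\sup\Vert \mathrm{P}\Vert_r+\tfrac32\alpha\sup\Vert\partial_v \mathrm{P}\Vert_r .$$
Using $\alpha<1$, this is at most $(\frac32+\frac{27}{8})\sup\Vert \mathrm{P}\Vert_r+\frac32\alpha\sup\Vert\partial_v\mathrm{P}\Vert_r\le \frac{39}{8}\Vert \mathrm{P}\Vert^\alpha_r$. The constant $\frac{39}{8}$ matches $\frac{12}{8}+\frac{27}{8}$ exactly, which suggests this is the intended bookkeeping.

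The only point requiring care is that this is a bound on weighted sums of suprema along diagonals, not on individual entries. Since the entrywise bounds hold uniformly in $m,n,j,v$ with constant coefficients, however, they pass through $\sup_{m-n=l}$ and the sums directly. Differentiability of $\mathrm{W}$ in $v$ follows from differentiability of $\mathrm{P}$ and $\mathrm{d}_n$ together with the nonvanishing denominators.

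For skew-adjointness, suppose $\mathrm{P}^*=\mathrm{P}$, i.e. $\mathrm{P}_{nm}=\overline{\mathrm{P}_{mn}}$. Since the $\mathrm{d}_n$ are real,
$$\overline{\mathrm{W}_{nm}}=\frac{\overline{\mathrm{P}_{nm}}}{\mathrm{d}_n-\mathrm{d}_m}=-\frac{\mathrm{P}_{mn}}{\mathrm{d}_m-\mathrm{d}_n}=-\mathrm{W}_{mn},$$
and the diagonal is zero, so $\mathrm{W}^*=-\mathrm{W}$. No step is a serious obstacle; the main task is tracking the constants so that they combine to exactly $\frac{39}{8}$.
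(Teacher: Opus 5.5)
Your proposal is correct and follows the paper's proof essentially line by line. Both arguments solve entrywise with $\mathrm{W}_{mn}=\mathrm{P}_{mn}/(\mathrm{d}_m-\mathrm{d}_n)$, use the entry bounds $\frac32|\mathrm{P}_{mn}|$ and $\frac32|\partial_{v_j}\mathrm{P}_{mn}|+\frac{27}{8}|\mathrm{P}_{mn}|$ combined via $\alpha<1$ into the constant $\frac{39}{8}$, and check skew-adjointness by the same conjugate-symmetry computation; you also make explicit that uniform entrywise bounds pass through $\sup_{m-n=l}$ and the weighted sums, which the paper leaves implicit.
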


 \begin{proof}
 For the linear equation \eqref{hoo}, one gets
 $$\mathrm{W}_{mn}\mathrm{d}_n-\mathrm{d}_m\mathrm{W}_{mn}+\mathrm{P}_{mn}=0,$$
and
 $$ \mathrm{W}_{mn}=\left\{\begin{array}{ll}
 \frac{\mathrm{P}_{mn}}{\mathrm{d}_m-\mathrm{d}_n},\quad &n\neq m,\\
\ \ 0, &n=m.
 \end{array}
 \right.$$
 From the bounds \eqref{cond}, one has
 $$|\mathrm{W}_{mn}|=\left| \frac{\mathrm{P}_{mn}}{\mathrm{d}_m-\mathrm{d}_n}\right|\le \frac{3}{2}|P_{mn}|,$$
 and
 \begin{equation}
 \begin{split}
 \left| \frac{\partial \mathrm{W}_{mn}}{\partial_{v_j}}\right|&=\left| \frac{1}{d_m-d_n}\frac{\partial \mathrm{P}_{mn}}{\partial v_j}+\frac{\partial (\mathrm{d}_{m}-\mathrm{d}_n)}{\partial v_j}\frac{\mathrm{P}_{mn}}{(\mathrm{d}_m-\mathrm{d}_n)^2}\right|\\
 &\le \frac{3}{2}\left| \frac{\partial \mathrm{P}_{mn}}{\partial v_j}\right|+\frac{27}{8}|  \mathrm{P}_{mn}|.
 \end{split}
 \end{equation}
 Since $0<\alpha<1$, one has
 \begin{equation}
 \begin{split}
 \Vert \mathrm{W}\Vert^\alpha_r\le& \left(\frac{3}{2}+\frac{27}{8} \right)\Vert \mathrm{P}\Vert_r+ \frac{3}{2} \alpha \left\Vert \frac{\partial \mathrm{P}}{\partial v}\right\Vert_r\\
   \le& \frac{39}{8}\Vert \mathrm{P}\Vert^\alpha_r.
  \end{split}
 \end{equation}
 Since the operator $\mathrm{P}$ is self adjoint, one has $\mathrm{P}_{nm}=\overline{\mathrm{P}_{mn}}.$ Then, we obtain
 $$\mathrm{W}_{mn}= \frac{\mathrm{P}_{mn}}{\mathrm{d}_m-\mathrm{d}_n}=- \frac{\overline{\mathrm{P}_{nm}}}{\mathrm{d}_n-\mathrm{d}_m}=-\overline{\mathrm{W}_{nm}},$$
 which indicates $\mathrm{W}$ is skew-adjoint.
 \end{proof}

\begin{lem}\label{+}
For the linear operator
$$\mathcal{L}=\mathrm{D}+\mathrm{P},$$
where $\mathrm{D}$ satisfies the assumptions in Lemma \ref{jie} and $\mathrm{P}\in \mathcal{M}^\alpha_r$. 
 Then there is a unitary transformation $e^{\mathrm{W}}\in \mathcal{M}^\alpha_r$ such that
$$e^{\mathrm{W}} (\mathrm{D}+\mathrm{P})e^{-\mathrm{W}}=\mathrm{D}_++\mathrm{P}_+,$$
where $\mathrm{D}_+=\mathrm{D}+{\rm diag} \mathrm{P}$ and $\mathrm{P}_+\in \mathcal{M}^\alpha_r$ with the following estimate
$$\Vert \mathrm{P}_+\Vert^\alpha_r\le  \frac{117}{8}(\Vert \mathrm{P}\Vert^\alpha_r)^2 e^{\frac{78}{8}\Vert \mathrm{P}\Vert^\alpha_r}.$$
\end{lem}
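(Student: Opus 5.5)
We take $\mathrm{W}$ from Lemma \ref{jie}, i.e. the solution of the homological equation \eqref{hoo}, so that $\Vert \mathrm{W}\Vert^\alpha_r\le \frac{39}{8}\Vert \mathrm{P}\Vert^\alpha_r$. Since $\mathcal{L}$ is self-adjoint, $\mathrm{P}$ is self-adjoint, hence $\mathrm{W}$ is skew-adjoint and $e^{\mathrm{W}}$ is unitary. Membership $e^{\mathrm{W}}\in\mathcal{M}^\alpha_r$ follows from Lemma \ref{chen}: each term satisfies $\Vert \mathrm{W}^n\Vert^\alpha_r\le(\Vert \mathrm{W}\Vert^\alpha_r)^n$, so the exponential series converges in $\Vert\cdot\Vert^\alpha_r$.

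Next expand by the Lie series, as in \eqref{1}:
\begin{align*}
e^{\mathrm{W}}(\mathrm{D}+\mathrm{P})e^{-\mathrm{W}}
=\mathrm{D}+\mathrm{P}+[\mathrm{W},\mathrm{D}]+\sum_{n\ge2}\frac{\mathbf{ad}^n_{\mathrm{W}}(\mathrm{D})}{n!}+\sum_{n\ge1}\frac{\mathbf{ad}^n_{\mathrm{W}}(\mathrm{P})}{n!}.
\end{align*}
By \eqref{hoo}, $\mathrm{P}+[\mathrm{W},\mathrm{D}]={\rm diag}\,\mathrm{P}$, which yields $\mathrm{D}_+$. Using $\mathbf{ad}_{\mathrm{W}}(\mathrm{D})=-(\mathrm{P}-{\rm diag}\,\mathrm{P})$, we get $\mathbf{ad}^n_{\mathrm{W}}(\mathrm{D})=-\mathbf{ad}^{n-1}_{\mathrm{W}}(\mathrm{P}-{\rm diag}\,\mathrm{P})$. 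Therefore
\begin{align*}
\mathrm{P}_+=\sum_{n\ge1}\frac{\mathbf{ad}^n_{\mathrm{W}}(\mathrm{P})}{n!}-\sum_{n\ge1}\frac{\mathbf{ad}^{n}_{\mathrm{W}}(\mathrm{P}-{\rm diag}\,\mathrm{P})}{(n+1)!}.
\end{align*}
This is the standard form, and every term contains at least one factor of $\mathrm{W}$ together with one factor of $\mathrm{P}$, so the remainder is quadratic.

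To estimate it, apply Lemma \ref{chen}: $\Vert\mathbf{ad}^n_{\mathrm{W}}(\mathrm{Q})\Vert^\alpha_r\le (2\Vert \mathrm{W}\Vert^\alpha_r)^n\Vert \mathrm{Q}\Vert^\alpha_r$. We also need $\Vert {\rm diag}\,\mathrm{P}\Vert^\alpha_r\le\Vert \mathrm{P}\Vert^\alpha_r$, which holds because the diagonal is the $l=0$ slice of the norm and the same is true for its $v_j$-derivatives. Then $\Vert \mathrm{P}-{\rm diag}\,\mathrm{P}\Vert^\alpha_r\le\Vert \mathrm{P}\Vert^\alpha_r$, since off-diagonal entries are just a subset of the sum. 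Writing $w=2\Vert \mathrm{W}\Vert^\alpha_r\le\frac{78}{8}\Vert \mathrm{P}\Vert^\alpha_r$, we obtain
\begin{align*}
\Vert \mathrm{P}_+\Vert^\alpha_r\le \Vert \mathrm{P}\Vert^\alpha_r\Big(\sum_{n\ge1}\frac{w^n}{n!}+\sum_{n\ge1}\frac{w^n}{(n+1)!}\Big)\le \Vert \mathrm{P}\Vert^\alpha_r\cdot w\,(e^w+e^w)\cdot\tfrac{?}{}\,.
\end{align*}
Here the exponent uses $w^{n-1}/n!\le w^{n-1}/(n-1)!$, giving the factor $e^{w}=e^{\frac{78}{8}\Vert \mathrm{P}\Vert^\alpha_r}$. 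The main obstacle is bookkeeping the constant so that it comes out as exactly $\frac{117}{8}$. Since $\frac{117}{8}=\frac{3}{2}\cdot\frac{78}{8}$, the constant should come from the prefactor $w$ times a combined coefficient $\frac{3}{2}$. That $\frac32$ arises from the sum $\sum_{n\ge1} w^{n-1}\big(\frac1{n!}+\frac1{(n+1)!}\big)$, whose leading term is $1+\frac12$ and whose tail is bounded by $\frac32 e^{w}$, because $\frac1{n!}+\frac1{(n+1)!}\le\frac{3}{2}\frac{1}{(n-1)!}$ for all $n\ge1$. This gives
\begin{align*}
\Vert \mathrm{P}_+\Vert^\alpha_r\le \tfrac32\, w\,\Vert \mathrm{P}\Vert^\alpha_r e^{w}\le\tfrac{117}{8}(\Vert \mathrm{P}\Vert^\alpha_r)^2e^{\frac{78}{8}\Vert \mathrm{P}\Vert^\alpha_r}.
\end{align*}
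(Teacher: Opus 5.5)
Your proposal is correct and is essentially the paper's proof. You use the same $\mathrm{W}$ from Lemma \ref{jie}, the same Lie-series expansion with $\mathbf{ad}^n_{\mathrm{W}}(\mathrm{D})=-\mathbf{ad}^{n-1}_{\mathrm{W}}(\mathrm{P}-{\rm diag}\,\mathrm{P})$, and reach the same constant $\frac{117}{8}=\frac32\cdot\frac{78}{8}$; the paper just bounds the two sums separately by $\frac12 e^{w}$ and $e^{w}$ instead of using your termwise inequality $\frac1{n!}+\frac1{(n+1)!}\le\frac32\frac1{(n-1)!}$, and you should delete your unfinished intermediate display containing the placeholder, since your final line already gives the correct bound.
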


\begin{proof}
From Lemma \ref{jie}, we have $ [\mathrm{W},\mathrm{D}]+\mathrm{P}={\rm diag} \mathrm{P}$. Therefore, we have
\begin{align*}
e^{\mathrm{W}}\mathcal{L}e^{-\mathrm{W}}=&\mathrm{D}+[\mathrm{W},\mathrm{D}]+\mathrm{P}+\sum_{n=2}^\infty \frac{\mathbf{ad}^n_{\mathrm{W}}(\mathrm{D})}{n!}+\sum_{n=1}^\infty \frac{\mathbf{ad}^n_{\mathrm{W}}\mathrm{P})}{n!}\\
=&\mathrm{D}+{\rm diag} \mathrm{P}+\sum_{n=2}^\infty \frac{\mathbf{ad}^{n-1}_\mathrm{W}({\rm diag} \mathrm{P}-\mathrm{P})}{n!}+\sum_{n=1}^\infty \frac{\mathbf{ad}^n_{\mathrm{W}}(\mathrm{P})}{n!}.
\end{align*}
Let $\mathrm{D}_+=\mathrm{D}+{\rm diag} \mathrm{P}$ and
$$\mathrm{P}_+=\sum_{n=2}^\infty \frac{\mathbf{ad}^n_W({\rm diag} \mathrm{P}-\mathrm{P})}{n!}+\sum_{n=1}^\infty \frac{\mathbf{ad}^n_\mathrm{W}(\mathrm{P})}{n!}.$$
From definition \ref{defn22}, one has
$$\Vert {\rm diag} \mathrm{P}- \mathrm{P}\Vert^\alpha_r\le \Vert \mathrm{P}\Vert^\alpha_r.$$
From Lemmata \ref{chen}, \ref{jie}, one sees
\begin{align*}
\Vert \mathrm{P}_+\Vert^\alpha_r\le &\sum_{n=2}^\infty \frac{\Vert \mathbf{ad}^{n-1}_{\mathrm{W}}({\rm diag} \mathrm{P}-\mathrm{P})\Vert^\alpha_r}{n!}+\sum_{n=1}^\infty \frac{\Vert\mathbf{ad}^n_\mathrm{W}(\mathrm{P})\Vert^\alpha_r}{n!}\\
\le& \sum_{n=2}^\infty \frac{(2\cdot \frac{39}{8}\Vert \mathrm{P}\Vert^\alpha_r)^{n-1}\Vert \mathrm{P}\Vert^\alpha_r}{n!}+\sum_{n=1}^\infty \frac{(2\cdot \frac{39}{8}\Vert \mathrm{P}\Vert^\alpha_r)^{n}\Vert \mathrm{P}\Vert^\alpha_r}{n!}\\
\le &2\cdot \frac{39}{8}(\Vert \mathrm{P}\Vert^\alpha_r)^2 \sum_{n=2}^\infty \frac{(2\cdot \frac{39}{8}\Vert \mathrm{P}\Vert^\alpha_r)^{n-2}}{n!}+ 2\cdot \frac{39}{8}(\Vert \mathrm{P}\Vert^\alpha_r )^2\sum_{n=1}^\infty \frac{(2\cdot \frac{39}{8}\Vert \mathrm{P}\Vert^\alpha_r)^{n-1}}{n!}\\
\le &\frac{39}{8}(\Vert \mathrm{P}\Vert^\alpha_r)^2 e^{2\cdot \frac{39}{8}\Vert \mathrm{P}\Vert^\alpha_r}+ 2\cdot \frac{39}{8}(\Vert \mathrm{P}\Vert^\alpha_r)^2 e^{2\cdot \frac{39}{8}\Vert \mathrm{P}\Vert^\alpha_r}\\
\le &\frac{117}{8}(\Vert \mathrm{P}\Vert^\alpha_r)^2 e^{\frac{78}{8}\Vert \mathrm{P}\Vert^\alpha_r}.
\end{align*}
\end{proof}

In what follows, set
$$\mathcal{L}^0=\mathrm{D}^0+\mathrm{P}^0,\quad \mathrm{D}^0=\mathrm{D}_{new},\quad \mathrm{P}^0=\mathrm{P}_{new}.$$
Let $\alpha:= \frac{1}{10}$ and $\delta \leq \frac{1}{55}$,  then, by \eqref{epnew} and \eqref{epnew1}, we obatin
\begin{equation}
\begin{split}
\Vert\mathrm{P}^0\Vert^\alpha_{\frac{1}{8}} &\leq \mathrm{C}(\delta)\frac{1}{10}+\mathrm{C}(\delta)\alpha\\
&\leq 2 \mathrm{C}(\delta)\alpha.
\end{split}
\end{equation}
For any  $n,m\in\mathbb{Z}$ with $n\neq m$,
\begin{equation}\big|\partial_{v_j}\big((\mathrm{D}^0)_{nn}-(\mathrm{D}^0)_{mm}\big)\big|= |\partial_{v_j}(n+v_n-(m+v_m))|\le 1,\quad j\in\mathbb{Z},
\end{equation}
and
\begin{equation}
\begin{split}
 |(\mathrm{D}^0)_{nn}-(\mathrm{D}^0)_{mm}|&= |n+v_n-(m+v_m)|\\ &\geq |n-m|-2\alpha
                        \geq \frac{4}{5}.
\end{split}
\end{equation}

\begin{thm}\label{hou}Let $$\epsilon_0:=2\mathrm{C}(\delta)\alpha \leq \frac{1}{55}.$$
Suppose that $\Vert \mathrm{P}^0\Vert^\alpha_{\frac{1}{8}}\leq \epsilon_0$.Then, there exist sequence of diagonal operators $\{\mathrm{D}^{k}\}_{k=0}^\infty$ and bounded operators $$\{\mathrm{W}^{k+1}\}_{k=0}^\infty, \quad \{\mathrm{P}^{k}\}_{k=0}^\infty,$$ such that for each $k \geq 0$, the operators $\mathrm{W}^{k+1}$ and $\mathrm{P}^k$ belong to the space $\mathcal{M}^\alpha_{\frac{1}{8}}$, and the following relations hold:
$$e^{\mathrm{W}^{k+1}}(\mathrm{D}^k+\mathrm{P}^k)e^{-\mathrm{W}^{k+1}}=\mathrm{D}^{k+1}+\mathrm{P}^{k+1},$$
$$\mathrm{U}^k:=\mathrm{e}^{\mathrm{W}^k}\circ \cdots \circ\mathrm{e}^{\mathrm{W}^1},$$
$$\mathrm{D}^{k+1}=\mathrm{D}^k+{\rm diag} \mathrm{P}^k,\quad \mathrm{P}^{k+1}=(\mathrm{P}^k)_+.$$
Moreover, the following bounds hold
\begin{equation}\label{pp}
\Vert \mathrm{P}^k\Vert^\alpha_{\frac{1}{8}} \le \epsilon_k:=\epsilon_0^{(\frac{5}{4})^k}
\end{equation}
\begin{equation}\label{pp1}
\Vert\mathrm{W}^{k+1}\Vert^\alpha_{\frac{1}{8}} \le \frac{39}{8}\epsilon_k,
\end{equation}
\begin{equation}\label{dd}
\mathrm{D}^k={\rm diag} \big\{\mathrm{d}_n^k :  n\in\mathbb{Z}\big \}, \quad |\mathrm{d}_n^k-\mathrm{d}_m^k|\geq \frac{2}{3}+\frac{1}{3^{k+2}},
\end{equation}

\begin{equation}\label{par}
 \left|\frac{\partial d_m^k}{\partial v_n}-\delta_{mn}\right|\le \frac{26}{15}\mathrm{C}(\delta), \quad \forall  m,n \in \mathbb{Z}.
\end{equation}
\end{thm}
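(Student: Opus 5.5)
Theorem \ref{hou} will be proved by induction on $k$, applying Lemma \ref{+} (and Lemma \ref{jie} inside it) at each step with $\mathrm{D}=\mathrm{D}^k$, $\mathrm{P}=\mathrm{P}^k$, $r=\frac18$, $\alpha=\frac1{10}$. The induction hypothesis at step $k$ is the conjunction of \eqref{pp}, \eqref{dd} and \eqref{par}. \eqref{pp1} then follows at each step directly from Lemma \ref{jie}.

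\textbf{Base case $k=0$.} \eqref{pp} is the assumption $\Vert \mathrm{P}^0\Vert^\alpha_{\frac18}\le\epsilon_0$. The gap bound $|\mathrm{d}^0_n-\mathrm{d}^0_m|\ge\frac45\ge\frac23+\frac19$ was computed just before the statement. \eqref{par} holds trivially, since $\partial_{v_n}\mathrm{d}^0_m=\delta_{mn}$.

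\textbf{Hypotheses of Lemma \ref{jie} at step $k$.} The gap condition in \eqref{cond} follows from \eqref{dd}. For the derivative condition, \eqref{par} gives
\[
|\partial_{v_j}(\mathrm{d}^k_n-\mathrm{d}^k_m)|\le |\delta_{nj}-\delta_{mj}|+\tfrac{52}{15}\mathrm{C}(\delta)\le 1+\tfrac{52}{15}\cdot\tfrac1{11}<\tfrac32,
\]
using $\mathrm{C}(\delta)<\frac1{11}$.

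\textbf{Step $k\to k+1$.} Lemma \ref{jie} yields $\mathrm{W}^{k+1}$ with $\Vert \mathrm{W}^{k+1}\Vert^\alpha_{\frac18}\le\frac{39}{8}\epsilon_k$, which is \eqref{pp1}. Lemma \ref{+} then gives the conjugation identity, with
\[
\Vert \mathrm{P}^{k+1}\Vert^\alpha_{\frac18}\le \tfrac{117}{8}\,\epsilon_k^2\, e^{\frac{78}{8}\epsilon_k}.
\]
To get \eqref{pp} we need $\frac{117}{8}e^{\frac{78}{8}\epsilon_k}\epsilon_k^{3/4}\le 1$, because then the right side is at most $\epsilon_k^{5/4}=\epsilon_{k+1}$. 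Since $\epsilon_k\le\epsilon_0\le\frac1{55}$, we have $e^{\frac{78}{8}\epsilon_k}\le e^{0.18}\approx1.2$ and $\epsilon_k^{3/4}\le 55^{-3/4}\approx0.049$. The product is about $14.6\times1.2\times0.049\approx0.86<1$, so this holds.

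\textbf{Diagonal update.} Write $\mathrm{d}^{k+1}_n=\mathrm{d}^k_n+\mathrm{P}^k_{nn}$, so that $|\mathrm{P}^k_{nn}|\le\Vert \mathrm{P}^k\Vert_{\frac18}\le\epsilon_k$ and $|\partial_{v_j}\mathrm{P}^k_{nn}|\le\alpha^{-1}\epsilon_k=10\epsilon_k$. For the gap,
\[
|\mathrm{d}^{k+1}_n-\mathrm{d}^{k+1}_m|\ge\tfrac23+\tfrac1{3^{k+2}}-2\epsilon_k .
\]
So it suffices that $2\epsilon_k\le\frac1{3^{k+2}}-\frac1{3^{k+3}}=\frac{2}{3^{k+3}}$, i.e.\ $\epsilon_0^{(5/4)^k}\le 3^{-(k+3)}$. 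For $k=0$ this reads $\frac1{55}\le\frac1{27}$. For larger $k$ the left side decays super-exponentially while the right side decays only geometrically; the ratio of consecutive left-hand terms is $\epsilon_k^{1/4}\le 55^{-1/4}<\frac13$, so the inequality propagates inductively.

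\textbf{Derivative bound \eqref{par}.} This is the step I expect to be the main obstacle. Telescoping gives
\[
\Big|\tfrac{\partial \mathrm{d}^{k+1}_m}{\partial v_n}-\delta_{mn}\Big|\le \sum_{i\le k}10\,\epsilon_i .
\]
This sum is of size $10\epsilon_0=20\mathrm{C}(\delta)\alpha=2\mathrm{C}(\delta)$ times a geometric factor, which overshoots $\frac{26}{15}\mathrm{C}(\delta)$. The crude bound therefore has to be sharpened at the first step $k=0$. There I would use the explicit structure of $\mathrm{P}^0$ instead of the norm bound. The $n=1$ term $[\delta\mathrm{W},\mathcal V]$ has zero diagonal, since $\mathrm{W}$ is off-diagonal and $\mathcal V$ is diagonal. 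Hence $\mathrm{diag}\,\mathrm{P}^0$ comes only from the $\mathbf{ad}^{n}$ terms with $n\ge2$, whose derivative is bounded by roughly $\frac12\mathrm{C}(\delta)^2$-type quantities, much smaller than $\mathrm{C}(\delta)$. For the tail $k\ge1$, use $10\sum_{i\ge1}\epsilon_i\le 10\,\epsilon_0^{5/4}(1+\epsilon_0^{1/4}+\cdots)$, which is a small multiple of $\mathrm{C}(\delta)$ because $\epsilon_0^{1/4}\le 55^{-1/4}$. Combining the two contributions should fit under $\frac{26}{15}\mathrm{C}(\delta)$. If the constants do not close exactly, I would instead bound $|\partial_{v_j}\mathrm{P}^k_{nn}|$ by the pointwise entry derivatives, which are controlled better than via $\alpha^{-1}$ times the full norm.
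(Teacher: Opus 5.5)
Your induction follows the paper's: Lemma \ref{jie} and Lemma \ref{+} at each step, the bound $\frac{117}{8}e^{\frac{78}{8}\epsilon_k}\epsilon_k^2\le\epsilon_k^{5/4}$, and the gap update $\frac23+3^{-(k+2)}-2\epsilon_k$. You also check the derivative hypothesis $|\partial_{v_j}(\mathrm{d}^k_n-\mathrm{d}^k_m)|\le\frac32$ of Lemma \ref{jie}, which the paper leaves implicit.

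The one real difference is the $i=0$ term in \eqref{par}. The paper bounds $|\partial_{v_n}(\mathrm{P}^0)_{mm}|$ by $\mathrm{C}(\delta)$ directly from \eqref{epnew1}. You instead use that $[\delta\mathrm{W},\mathcal{V}]$ has zero diagonal. Then only the $\mathbf{ad}^j$ terms with $j\ge2$ contribute, giving $|\partial_{v_n}(\mathrm{P}^0)_{mm}|\le e^x-1-x\le\frac{x}{2}\mathrm{C}(\delta)$ with $x=4\delta e^{1/8}\le 0.083$.

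This refinement matters for the stated constant. With $\epsilon_0=2\mathrm{C}(\delta)\alpha$ one has $\epsilon_1/\alpha=2^{5/4}\alpha^{1/4}\mathrm{C}(\delta)^{5/4}$; the paper's display drops the factor $2^{5/4}$. So for $\delta$ near $\frac1{55}$ the tail $10\sum_{i\ge1}\epsilon_i$ is about $0.99\,\mathrm{C}(\delta)$, and the paper's route gives roughly $2\,\mathrm{C}(\delta)$, which exceeds $\frac{26}{15}\mathrm{C}(\delta)$. Your route gives at most $0.05\,\mathrm{C}(\delta)+2\mathrm{C}(\delta)\frac{\epsilon_0^{1/4}}{1-\epsilon_0^{1/4}}\le 1.21\,\mathrm{C}(\delta)$. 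Downstream only some constant multiple of $\mathrm{C}(\delta)$ is used, so this affects only the constant.

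You should write this arithmetic out rather than saying it ``should fit''. The tail is not a small multiple of $\mathrm{C}(\delta)$: it is about $1.16\,\mathrm{C}(\delta)$.

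There is also one numerical slip in the gap step. Since $55<81$, we have $55^{-1/4}\approx 0.367>\frac13$, so your ratio argument fails at the first step as written. The conclusion still holds. Check $\epsilon_1\le 55^{-5/4}<3^{-4}$ directly, then use $\epsilon_k^{1/4}\le\epsilon_1^{1/4}\le 55^{-5/16}<\frac13$ for $k\ge1$, as the paper does.
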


\begin{proof}
By assumption, the bounds \eqref{pp}-\eqref{par} hold trivially for $k=0$. Proceeding inductively, we assume the KAM iteration holds at the $k-$th step and verify the corresponding results for the $(k+1)$-th step.
 From the bound  \eqref{dd}, one has
 $$|d_{n}^k-d_m^k|\geq \frac{2}{3}.$$
  From Lemma \ref{jie}, we can obtain the transformation $\mathrm{W}^{k+1}\in\mathcal{M}_{\frac{1}{8}}^\alpha$ with
 $$\Vert \mathrm{W}^{k+1}\Vert_{\frac{1}{8}}^\alpha\le \frac{39}{8}\Vert  \mathrm{P}^{k}\Vert^\alpha_{\frac{1}{8}}\le \frac{39}{8}\epsilon_k.$$
 Since $18<55^{\frac{3}{4}}$, from Lemma \ref{+}, one has
 \begin{align*}
 \Vert \mathrm{P}^{k+1}\Vert_{\frac{1}{8}}^\alpha= &\Vert (\mathrm{P}^{k})_+\Vert_{\frac{1}{8}}^\alpha\le  \frac{117}{8}(\Vert \mathrm{P}^k\Vert^\alpha_{\frac{1}{8}})^2 e^{\frac{78}{8}\Vert \mathrm{P}^k\Vert_{\frac{1}{8}}^\alpha}\\
 \le & \frac{117}{8}e^{\frac{39}{220}} \epsilon_k^2
 \le  18 \epsilon_k^2 \\
 \le & \epsilon_k^{\frac{5}{4}} \leq \epsilon_0^{(\frac{5}{4})^{k+1}}.
  \end{align*}

   Since $\epsilon_0< \frac{1}{55}<\frac{1}{3^3}$,  $\epsilon^{\frac{1}{4}}_1 \leq (\frac{1}{55})^{\frac{5}{16}} < \frac{1}{3} $, one can get
   \begin{align*}
   |\mathrm{d}^{k+1}_n-\mathrm{d}^{k+1}_m|= &|\mathrm{d}^{k}_n-\mathrm{d}^{k}_m+(\mathrm{P}^k)_{nn}-(\mathrm{P}^k)_{mm}|\\
   \geq& |\mathrm{d}^{k}_n-\mathrm{d}^{k}_m|-|(\mathrm{P}^k)_{nn}-(\mathrm{P}^k)_{mm}|\\
   \geq&\frac{2}{3}+(\frac{1}{3})^{k+2}-2\epsilon_k\\
   \geq&\frac{2}{3}+(\frac{1}{3})^{k+3}.
   \end{align*}
   Moreover, from \eqref{epnew1} and \eqref{pp}, we have
   for $m,n \in\mathbb{Z}$,
      \begin{align*}
\left|\frac{\partial d_{m}^{k+1}}{\partial v_n}-\delta_{mn}\right|\le & \sum_{i=0}^{k} \left|\frac{\partial (\mathrm{P}^i)_{mm}}{\partial v_n}\right|
\le \mathrm{C}(\delta)+\sum^k_{i=1}\frac{\epsilon^{(\frac{5}{4})^{i}}_0}{\alpha} \\
\le & \mathrm{C}(\delta)+\alpha^{\frac{1}{4}}(\mathrm{C}(\delta))^{\frac{5}{4}}+\sum^{k}_{i=2}\frac{(\alpha \mathrm{C}(\delta))^{(\frac{5}{4})^{i}}}{\alpha}\\
\le &\frac{7}{5}\mathrm{C}(\delta)+\sum^{k}_{i=2}\alpha^{\frac{9}{16}}\mathrm{C}(\delta)^{(\frac{5}{4})^i}\\
\le &\frac{7}{5} \mathrm{C}(\delta)+\frac{1}{3}\sum^{k}_{i=1}(\frac{1}{2})^{i}\mathrm{C}(\delta)\\
\le & \frac{26}{15}\mathrm{C}(\delta),
   \end{align*}
   since $\alpha\leq \frac{1}{10}$ and $\mathrm{C}(\delta)< \frac{1}{11}$.
\end{proof}

{\bf Proof of Theorem \ref{yubei}}:
\begin{proof}
Firstly, we will show the convergence of $\mathrm{U}^k, (\mathrm{U}^{k})^{-1}$. From Theorem \ref{hou}, one has
\begin{align*}
\Vert e^{\mathrm{W}_{k+1}}-\mathbb{I}\Vert_{\frac{1}{8}}^\alpha\le& \sum_{j=1}^\infty\frac{\Vert (\mathrm{W}^{k+1})^j\Vert_{\frac{1}{8}}^\alpha}{j!}\le \sum_{j=1}^\infty\frac{(\Vert \mathrm{W}^{k+1}\Vert^\alpha_1)^j}{j!}\\
\le & \sum_{j=1}^\infty\frac{(\frac{39}{8}\epsilon_k)^j}{j!}\\
\le &\frac{13}{2}\epsilon_k.
\end{align*}
The, for any $k\geq 0$, one gets
$$\mathrm{U}^{k+1}=e^{\mathrm{W}^{k+1}}\mathrm{U}^k=\mathrm{U}^k+(e^{\mathrm{W}_{k+1}}-\mathbb{I})\mathrm{U}^k$$
and
\begin{align}\label{Uinf}
\Vert \mathrm{U}^{k+1}\Vert_{\frac{1}{8}}^\alpha\le (1+\frac{13}{2}\epsilon_k)\Vert\mathrm{U}^{k}\Vert_{\frac{1}{8}}^\alpha\le \prod_{j=0}^k(1+\frac{13}{2}\epsilon_j)\le \frac{4}{3},
\end{align}
Since
\begin{equation}
\begin{split}
\ln{\prod_{j=0}^k(1+\frac{13}{2}\epsilon_j)}&=\sum^{k}_{j=0}\ln{(1+\frac{13}{2}\epsilon_j)} \leq \sum^{k}_{j=0}\frac{13}{2}\epsilon_j\\
&\leq \frac{65}{6}\epsilon_0 \leq \frac{13}{66}.
\end{split}
\end{equation}
Also, one has
\begin{align*}
\Vert \mathrm{U}^{k+j}-\mathrm{U}^{k}\Vert_{\frac{1}{8}}^\alpha\le& \sum_{i=0}^{j-1} \Vert \mathrm{U}^{k+i+1}-\mathrm{U}^{k+i}\Vert_{\frac{1}{8}}^\alpha\le  \sum_{i=0}^{j-1}\Vert e^{\mathrm{W}^{k+i+1}}-\mathbb{I}\Vert_{\frac{1}{8}}^\alpha \Vert \mathrm{U}^{k+i}\Vert_{\frac{1}{8}}^\alpha\\
\le & \frac{4}{3}\sum_{i=0}^{j-1} \frac{13}{2}\epsilon_{k+i}\le \frac{130}{9} \epsilon_{k}\rightarrow 0,
\end{align*}
which implies $\mathrm{U}^k$ converges to an operator $\mathrm{U}^\infty$ with
\begin{equation}\label{Uinf2}\Vert \mathrm{U}^\infty-\mathbb{I}\Vert_{\frac{1}{8}}^\alpha\le \frac{130}{9}\epsilon_0.
\end{equation}
Let $\mathrm{G}=\mathrm{U}\circ \mathrm{U}^\infty$, from Lemma $\ref{Uest}$ and \eqref{Uinf},\eqref{Uinf2},  one has
\begin{equation}
\begin{split}
\|\mathrm{G}-\mathbb{I}\|^{\alpha}_{\frac{1}{8}}&\leq\|\mathrm{U}\circ \mathrm{U}^\infty-\mathbb{I}\|^{\alpha}_{\frac{1}{8}}\\
&\leq  \|\mathrm{U}\circ \mathrm{U}^\infty-\mathrm{U}^{\infty}\|^{\alpha}_{\frac{1}{8}}+\| \mathrm{U}^{\infty}-\mathbb{I}\|^{\alpha}_{\frac{1}{8}}\\
&\leq  \|\mathrm{U}-\mathbb{I}\|^{\alpha}_{\frac{1}{8}}\cdot \|\mathrm{U}^\infty\|^{\alpha}_{\frac{1}{8}}+\| \mathrm{U}^\infty-\mathbb{I}\|^{\alpha}_{\frac{1}{8}}\\
& \leq\frac{\mathrm{C}(\delta)}{2}\cdot \frac{4}{3}+ \frac{130}{9}\epsilon_0\\
&\leq \frac{19}{9}\mathrm{C}(\delta)
\end{split}
\end{equation}
 The same estimation holds for the $(\mathrm{U}^k)^{-1}$, $(\mathrm{U}^\infty)^{-1}$ and $\mathrm{G}^*:=\mathrm{G}^{-1}=\mathrm{U}^{-1}\circ (\mathrm{U}^\infty)^{-1}$. Finally, one has
$$\mathrm{G}^* \mathcal{L} \mathrm{G}=\mathrm{D}^\infty,$$
with $\mathrm{D}^{\infty}={\rm diag} \big\{\mathrm{d}^{\infty}_n:  n\in\mathbb{Z}\big\}$, where
$$\mathrm{d}^{\infty}_n=\lim_{k\rightarrow \infty}\mathrm{d}_n^k.$$
In addition, one gets
\begin{align*}
|d_n^\infty-n|\le \sum_{k=0}^\infty |\mathrm{P}^k_{nn}|\le \sum_{k=0}^\infty \Vert \mathrm{P}^k\Vert_{\frac{1}{8}} \le \sum_{k=0}^\infty \epsilon_k\le \frac{5}{3}\epsilon_0
\end{align*}
and
$$
 \left|\frac{\partial \mathrm{d}_n^
\infty}{\partial v_m}-\delta_{jn}\right|\le \frac{26}{15}\mathrm{C}(\delta),\quad  \forall m, n\in\mathbb{Z}.$$
\end{proof}

\section{The Hamiltonian structure}
The main objective of this section is to prove that after diagonalizing the linear part, the new nonlinear equation remains a Hamiltonian system, and to derive its Hamiltonian formalism. Although this may seem intuitively natural, we provide a rigorous computational procedure in this section as the foundation for the subsequent KAM theorem.

From the proceeding section, one knows that there exists a real and  unitary transformation $\mathrm{G}$, that conjugate $\mathcal{L}$ into
\begin{equation}
\mathrm{G^*\mathcal{L}G}:=\mathrm{D}=\mathrm{diag}\big\{d_n=n+f_n: n\in \mathbb{Z}\big\}.
\end{equation}
Let $u,q\in \ell^2(\mathbb{Z})$, the discrete Schr\"odinger equation \eqref{eq1.1} can be represented as
\begin{equation}\label{ds}
\mathbf{i}\partial_{t}u+\mathcal{L}u+\epsilon\mathcal{N}(u,\bar{u})=0,
\end{equation}
where $\mathcal{N}(u,\bar{u})$ represents the nonlinear term of equation  \eqref{eq1.1}.

By introducing the coordinate transformation $$u=\mathrm{G}q, \quad \bar{u}=\bar{\mathrm{G}}\bar{q},$$
one gets
\begin{equation*}
\mathbf{i}\mathrm{G}\partial_{t}q+\mathcal{L}\mathrm{G}q+\epsilon\mathcal{N}(\mathrm{G}q,\bar{\mathrm{G}}\bar{q})=0
\end{equation*}
and
\begin{equation}
\mathrm{i}\partial_{t}q+\mathrm{D}q+\epsilon\mathrm{G}^*\mathcal{N}(\mathrm{G}q,\bar{\mathrm{G}}\bar{q})=0.
\end{equation}
The equation can ultimately be expressed as a system of nonlinearly coupled differential equations.
\begin{equation*}
\mathbf{i}\dot{q}_n=-d_nq_n-\epsilon\sum_{k,m_1,m_2,m_3}\mathrm{G}^*_{n,k}\mathrm{G}_{k,m_1}\bar{\mathrm{G}}_{k,m_2}\mathrm{G}_{k,m_3}q_{m_1}\bar{q}_{m_2}q_{m_3}.
\end{equation*}
Seeing that $\mathrm{G}$ is a real and unitary operator, one has
\begin{equation}
\bar{\mathrm{G}}=\mathrm{G}, \quad \mathrm{G}^*_{n,k}=\mathrm{G}_{k,n},
\end{equation}
and
\begin{equation}\label{eq2}
\mathbf{i}\dot{q}_n=-d_nq_n-\epsilon\sum_{k,m_1,m_2,m_3}\mathrm{G}_{k,n}\mathrm{G}_{k,m_1}\mathrm{G}_{k,m_2}\mathrm{G}_{k,m_3}q_{m_1}\bar{q}_{m_2}q_{m_3}.
\end{equation}
Take the complex conjugate of equation \eqref{eq2}, one has
\begin{equation}
\mathbf{i}\dot{q}_n=d_nq_n+\epsilon\sum_{k,m_1,m_2,m_3}\mathrm{G}_{k,n}\mathrm{G}_{k,m_1}\mathrm{G}_{k,m_2}\mathrm{G}_{k,m_3}\bar{q}_{m_1}q_{m_2}\bar{q}_{m_3}.
\end{equation}
Below, we demonstrate that equation \eqref{eq2} satisfies the Hamiltonian formalism.
\begin{thm}
Let \begin{equation}\label{hami}\mathcal{H}(q,\bar{q})=\sum_{m}d_m\bar{q}_mq_m+\frac{\epsilon}{2}\sum_{m,k,m_1,m_2,m_3}\mathrm{G}_{k,m}\mathrm{G}_{k,m_1}\mathrm{G}_{k,m_2}
\mathrm{G}_{k,m_3}q_{m_1}\bar{q}_{m_2}q_{m_3}\bar{q}_{m},\end{equation}
one has
\begin{eqnarray}
   \mathbf{i}\partial_{t}q_n&=& -\frac{\partial \mathcal{H}}{\partial \bar{q}_n}, \\
   \mathbf{i}\partial_{t}\bar{q}_n&=& \frac{\partial \mathcal{H}}{\partial q_n}.
 \end{eqnarray}
\end{thm}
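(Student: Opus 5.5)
The plan is a direct computation, treating $q_m$ and $\bar q_m$ as independent variables (Wirtinger derivatives). We differentiate $\mathcal{H}$ in \eqref{hami} with respect to $\bar q_n$ and compare the result with the right-hand side of \eqref{eq2}. The conjugate equation will then follow by the same computation applied to $\partial\mathcal{H}/\partial q_n$, or simply by complex conjugation of the first identity, since $\mathcal{H}$ is real-valued.

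The quadratic part is immediate: $\partial_{\bar q_n}\sum_m d_m\bar q_m q_m = d_n q_n$. For the quartic part, write it as $\frac{\epsilon}{2}\sum_k \big(\sum_{m}\mathrm{G}_{k,m}q_m\big)^2\big(\sum_{m}\mathrm{G}_{k,m}\bar q_m\big)^2$. This regrouping is legitimate because $\mathrm{G}$ is real, so $\bar{\mathrm{G}}=\mathrm{G}$. In the original variables this is just $\frac{\epsilon}{2}\sum_k |u_k|^4$ with $u=\mathrm{G}q$, and the factorized form makes the symmetry in the two $\bar q$ slots ($m_2$ and $m$) evident.

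Differentiating with respect to $\bar q_n$ hits either the $\bar q_{m_2}$ or the $\bar q_m$ factor. Each gives the same contribution $\sum_{k,m_1,m_2,m_3}\mathrm{G}_{k,n}\mathrm{G}_{k,m_1}\mathrm{G}_{k,m_2}\mathrm{G}_{k,m_3}q_{m_1}\bar q_{m_2}q_{m_3}$ after relabeling. The factor $2$ therefore cancels the $\frac12$, giving
\begin{equation*}
\frac{\partial\mathcal{H}}{\partial\bar q_n}=d_nq_n+\epsilon\sum_{k,m_1,m_2,m_3}\mathrm{G}_{k,n}\mathrm{G}_{k,m_1}\mathrm{G}_{k,m_2}\mathrm{G}_{k,m_3}q_{m_1}\bar q_{m_2}q_{m_3}.
\end{equation*}
By \eqref{eq2} this equals $-\mathbf{i}\dot q_n$, which is the first Hamiltonian equation. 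Reality of $\mathcal{H}$ holds because the quartic term is $\frac{\epsilon}{2}\sum_k|(\mathrm{G}q)_k|^4$ and the $d_m$ are real. Hence $\partial_{q_n}\mathcal{H}=\overline{\partial_{\bar q_n}\mathcal{H}}$, and conjugating the first equation gives $\mathbf{i}\partial_t\bar q_n=\partial_{q_n}\mathcal{H}$.

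The only genuine point requiring care is justifying termwise differentiation of the infinite sums; everything else is bookkeeping. I would handle it using Theorem \ref{yubei}: $|\mathrm{G}_{mn}|\le \delta_{mn}+\frac{19}{9}\mathrm{C}(\delta)e^{-|m-n|/8}$, so $\mathrm{G}$ is bounded on $\ell^2$. Then $q\mapsto \mathrm{G}q\mapsto \sum_k|(\mathrm{G}q)_k|^4$ is a smooth (polynomial) functional on $\ell^2$, since $\ell^2\subset\ell^4$. Its Fréchet derivative is computed by the chain rule, and the resulting series converge absolutely. The quadratic term $\sum d_m|q_m|^2$ is unbounded because $d_m\sim m$, so it is understood on the domain of $\mathrm{D}$, where its partial derivatives are computed coordinatewise without issue.
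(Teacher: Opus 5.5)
Your proposal is correct and is essentially the paper's own argument: a direct Wirtinger-derivative computation in which the two $\bar q$ slots contribute equally, so the factor $\frac{\epsilon}{2}$ becomes $\epsilon$ and one recovers exactly the equation of motion for $q_n$. Your monomial $q_{m_1}\bar q_{m_2}q_{m_3}$ is the right one; the paper's displayed computation has a typo there.

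Your derivation of the $\bar q_n$ equation from the reality of $\mathcal{H}=\sum_m d_m|q_m|^2+\frac{\epsilon}{2}\sum_k|(\mathrm{G}q)_k|^4$, and your remark on termwise differentiation, fill in steps the paper leaves implicit. One small correction: the factorization itself is purely algebraic, and reality of $\mathrm{G}$ is needed only to identify it with $|(\mathrm{G}q)_k|^4$.
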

\begin{proof}
The proof is essentially a straightforward calculation. For completeness, we provide some of the key steps.
\begin{equation}
\begin{split}
\frac{\partial \mathcal{H}}{\partial \bar{q}_n}=& d_nq_n+\frac{\epsilon}{2}\sum_{k,m_1,m_2,m_3}\mathrm{G}_{k,n}\mathrm{G}_{k,m_1}\mathrm{G}_{k,m_2}\mathrm{G}_{k,m_3}q_{m_1}\bar{q}_{m_2}q_{m_3}\\
&+\frac{\epsilon}{2}\sum_{m,k,m_1,m_3}\mathrm{G}_{k,m}\mathrm{G}_{k,m_1}\mathrm{G}_{k,n}\mathrm{G}_{k,m_3}q_{m_1}\bar{q}_{m}\bar{q}_{m_3}\\
=&d_nq_n+\epsilon\sum_{k,m_1,m_2,m_3}\mathrm{G}_{k,n}\mathrm{G}_{k,m_1}\mathrm{G}_{k,m_2}\mathrm{G}_{k,m_3}q_{m_1}\bar{q}_{m_2}\bar{q}_{m_3}\\
=&-\mathbf{i}\partial_{t}q_n.
\end{split}
\end{equation}
\end{proof}
Let $$\frac{1}{2}\sum_{k}\mathrm{G}_{k,m}\mathrm{G}_{k,m_1}\mathrm{G}_{k,m_2}
\mathrm{G}_{k,m_3}=\mathcal{G}_{(m,m_1,m_2,m_3)},$$
then the Hamiltonian function \eqref{hami} can be represented as
\begin{equation}\label{Hamilton3.11}
\mathcal{H}(q,\bar{q})=\sum_{m}d_m\bar{q}_mq_m+\epsilon\sum_{m,,m_1,m_2,m_3}\mathcal{G}_{(m,m_1,m_2,m_3)}q_{m_1}\bar{q}_{m_2}q_{m_3}\bar{q}_{m}.
\end{equation}
Set $m=\min\{m,m_1,m_2,m_3\}, \ m_3=\max\{m,m_1,m_2,m_3\},$
one has
\begin{equation}\label{calg}
\begin{split}
|\mathcal{G}_{(m,m_1,m_2,m_3)}|^{\alpha}&\leq 3\sum_{k}e^{-\frac{1}{8}|k-m|}e^{-\frac{1}{8}|k-m_1|}e^{-\frac{1}{8}|k-m_2|}e^{-\frac{1}{8}|k-m_3|}\\
&\leq 3e^{-\frac{1}{8}|m_3-m|}\sum_{k}e^{-\frac{1}{8}|k-m_1|}e^{-\frac{1}{8}|k-m_2|}\\
&\leq 24e^{-\frac{1}{8}|m_3-m|}.
\end{split}
\end{equation}

\section{Localized state for nonlinear disorder stark model}

\subsection{Functional setting}
\begin{defn}
For any $d, \rho>0$ and complex sequences $q=(q_n)_{n\in\mathbb{Z}_1}$ with $\mathbb{Z}_1\subset\mathbb{Z}$, we say $q\in l_{d,\rho}^1$ if and only if
$$\Vert q\Vert_{d,\rho}=\sum_{n\in\mathbb{Z}_1}|q_n|\langle n\rangle^de^{\rho|n|}<\infty,$$
where $\langle n\rangle=\sqrt{1+|n|^2}$.
\end{defn}
\begin{defn}
For any $r,s>0$, denote $D_{d,\rho}(r,s)$ by the complex neighborhood of $\mathbb{T}^b\times \{0\}\times\{0\}\times\{0\}$ in $\mathbb{T}^b\times \mathbb{R}^b\times l_{d,\rho}^1\times l_{d,\rho}^1$, i.e.,
$$D_{d,\rho}(r,s)=\{(\theta,I,q,\bar{q}): |{\rm Im}\theta|<r, |I|<s^2, \Vert q\Vert_{d,\rho}+ \Vert \bar{q}\Vert_{d,\rho}<s\},$$
where $|\cdot|$ denotes the $l^1$ norm of complex vectors.
\end{defn}

Consider a function $F(\theta, I,q,\bar{q};\xi):D_{d,\rho}(r,s)\times \mathcal{O}\rightarrow \mathbb{C}$ real analytic about the variables $(x, y,q,\bar{q})\in D=D_{d,\rho}(r,s)$ and $C^1$-smooth\footnote{In the whole of this paper, the derivatives with respect to the parameter $\xi \in \mathcal{O}$ are understood in the sense of Whitney. } in Whitney's sense about the parameter $\xi\in\mathcal{O}$, where $\mathcal{O}$ is a closed region in $\mathbb{R}^b$. The function $F$ can be expanded into a Taylor-Fourier series
$$F(\theta, I, q,\bar{q};\xi)=\sum_{\alpha,\beta\in \mathbb{N}^{\mathbb{Z}_1}}F_{\alpha\beta}(\theta, I;\xi)q^\alpha\bar{q}^\beta,$$
where 
$$F_{\alpha\beta}(\theta, I;\xi)=\sum_{k\in \mathbb{Z}^b,l\in\mathbb{N}^b}F_{kl\alpha\beta}(\xi)I^le^{{\bf i}\langle k,\theta\rangle}.
$$

\begin{defn}
Given any nonzero multi-index $(\alpha,\beta)=(\alpha_n,\beta_n)_{n\in\mathbb{Z}_1}\in \mathbb{N}^{\mathbb{Z}_1}\times \mathbb{N}^{\mathbb{Z}_1}$ with finitely many nonvanishing components, we define
\begin{align*}
&n_{\alpha\beta}^+=\max\{n\in\mathbb{Z}: (\alpha_n,\beta_n)\neq 0\}, \\
&n_{\alpha\beta}^-=\min\{n\in\mathbb{Z}: (\alpha_n,\beta_n)\neq 0\}, \\
&n_{\alpha\beta}^*=\max\{|n_{\alpha\beta}^+|,|n_{\alpha\beta}^-|\},\\
&|\alpha|=\sum_{n\in\mathbb{Z}_1}\alpha_n, |\beta|=\sum_{n\in\mathbb{Z}_1}\beta_n.
\end{align*}

\footnote{ if $n > n_{\alpha\beta}^+, (\alpha_n,\beta_n)=0$; if $n < n_{\alpha\beta}^+, (\alpha_n,\beta_n)=0$}In particular, for $|\alpha|=|\beta|=0$, we set $n_{\alpha\beta}^+=n_{\alpha\beta}^-=n_{\alpha\beta}^*=0$.
\end{defn}

\begin{defn}
Consider a real analytic function $F(\theta, I,q,\bar{q};\xi)$ defined on $D=D_{d,\rho}(r,s)$, $C^1$ dependent on a parameter $\xi\in\mathcal{O}$. Let
$$
|F_{kl\alpha\beta}|_{\mathcal{O}}=\sup_{\xi\in\mathcal{O}}\left(|F_{kl\alpha\beta}|+\left|\frac{\partial F_{kl\alpha\beta}}{\partial \xi}\right|\right).$$
and
$$\Vert F_{\alpha\beta}\Vert_{\mathcal{O}}=\sum_{k\in\mathbb{Z}^b, l\in\mathbb{N}^b} |F_{kl\alpha\beta}|_{\mathcal{O}}|I|^le^{|k| |{\rm Im}\theta|},$$
$$\Vert F\Vert_{\mathcal{O}}=\sum_{k\in\mathbb{Z}^b, l\in\mathbb{N}^b,\alpha,\beta\in \mathbb{N}^{\mathbb{Z}_1}} |F_{kl\alpha\beta}|_{\mathcal{O}}|I|^le^{|k||{\rm Im}\theta|}|q^\alpha||\bar{q}^\beta|.$$
Then define the weighted norm of $F$ as
$$\Vert F\Vert_{D,\mathcal{O}}=\sup_{D}\Vert F\Vert_{\mathcal{O}}.$$
\end{defn}

\begin{defn}
Consider a real analytic function $F(\theta, I,q,\bar{q};\xi)$ on $D=D_{d,\rho}(r,s)$, $C^1$ dependent on a parameter $\xi\in\mathcal{O}$. For the Hamiltonian vector field $X_F=(\partial_IF,-\partial_\theta F, (-{\bf i}\partial_{q_n}F)_{n\in\mathbb{Z}_1},(-{\bf i}\partial_{\bar{q}_n}F)_{n\in\mathbb{Z}_1})$ associated with $F$ on $D\times \mathcal{O}$, define its norm by
$$\Vert X_F\Vert_{D,\mathcal{O}}=\Vert \partial_IF\Vert_{D,\mathcal{O}}+\frac{1}{s^2}\Vert  \partial_\theta F\Vert_{D,\mathcal{O}}+\sup_D\frac{1}{s}\sum_{n\in\mathbb{Z}_1}(\Vert \partial_{q_n}F\Vert_{\mathcal{O}}+\Vert \partial_{\bar{q}_n}F\Vert_{\mathcal{O}})\langle n\rangle^de^{|n|\rho}.$$
\end{defn}

Given two real analytic functions $F$ and $G$, let $\{\cdot, \cdot\}$ denote Poisson bracket of such functions, i.e.,
$$\{ F,G\}=\langle \partial_IF, \partial_\theta G\rangle-\langle \partial_\theta F, \partial_I G\rangle+{\bf i}\sum_{n\in \mathbb{Z}_1}(\partial_{q_n}F\cdot \partial_{\bar{q}_n}F-\partial_{\bar{q}_n}F\cdot \partial_{q_n}F).$$

For any $d,\rho,r,s>0$, let $F,G$ be two real analytic functions on $D=D_{d,\rho}(r,s)$, $C_W^1$ dependent on a parameter $\xi\in\mathcal{O}$.
\begin{lem}
The norm $\Vert \cdot \Vert_{D,\mathcal{O}}$ has the Banach algebraic property, i.e.,
$$\Vert FG\Vert_{D,\mathcal{O}}\le \Vert F \Vert_{D,\mathcal{O}} \Vert G \Vert_{D,\mathcal{O}}.$$
\end{lem}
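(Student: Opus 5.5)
The plan is to prove the inequality pointwise on $D$ and then take the supremum. Fix $(\theta,I,q,\bar q)\in D$ and write the Taylor--Fourier expansions
$$F=\sum_{k,l,\alpha,\beta}F_{kl\alpha\beta}(\xi)I^le^{{\bf i}\langle k,\theta\rangle}q^\alpha\bar q^\beta,\qquad G=\sum_{k',l',\alpha',\beta'}G_{k'l'\alpha'\beta'}(\xi)I^{l'}e^{{\bf i}\langle k',\theta\rangle}q^{\alpha'}\bar q^{\beta'}.$$
Multiplying the two absolutely convergent series gives the coefficients of the product as a Cauchy-type convolution,
$$(FG)_{KL\mathcal{A}\mathcal{B}}=\sum_{k+k'=K,\ l+l'=L,\ \alpha+\alpha'=\mathcal{A},\ \beta+\beta'=\mathcal{B}}F_{kl\alpha\beta}G_{k'l'\alpha'\beta'}.$$

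Next I establish the parameter-dependent coefficient estimate
$$|F_{kl\alpha\beta}G_{k'l'\alpha'\beta'}|_{\mathcal{O}}\le |F_{kl\alpha\beta}|_{\mathcal{O}}\,|G_{k'l'\alpha'\beta'}|_{\mathcal{O}}.$$
By the Leibniz rule, $|fg|+|\partial_\xi(fg)|\le |f||g|+|\partial_\xi f||g|+|f||\partial_\xi g|\le (|f|+|\partial_\xi f|)(|g|+|\partial_\xi g|)$ for each fixed $\xi$. Taking the supremum over $\xi\in\mathcal{O}$ on the left, and bounding each factor on the right by its own supremum, gives the claim. Since the derivatives are in Whitney's sense, I also need to note that the product of $C^1_W$ functions is $C^1_W$ with the Leibniz formula; this is standard.

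For the summation, the triangle inequality applied to the convolution bounds $|(FG)_{KL\mathcal{A}\mathcal{B}}|_{\mathcal{O}}$ by the sum of the products above. The weights factor multiplicatively:
$$|I|^{l+l'}=|I|^l|I|^{l'},\qquad |q^{\alpha+\alpha'}|=|q^\alpha||q^{\alpha'}|,\qquad |\bar q^{\beta+\beta'}|=|\bar q^{\beta}||\bar q^{\beta'}|.$$
The Fourier weight is only submultiplicative, $e^{|k+k'||{\rm Im}\theta|}\le e^{|k||{\rm Im}\theta|}e^{|k'||{\rm Im}\theta|}$, which follows from $|k+k'|\le|k|+|k'|$. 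Summing over $(K,L,\mathcal{A},\mathcal{B})$ and all decompositions, and using Tonelli since every term is nonnegative, gives $\Vert FG\Vert_{\mathcal{O}}\le\Vert F\Vert_{\mathcal{O}}\Vert G\Vert_{\mathcal{O}}$ at each point of $D$. Taking $\sup_D$ then yields
$$\Vert FG\Vert_{D,\mathcal{O}}\le \sup_D\big(\Vert F\Vert_{\mathcal{O}}\Vert G\Vert_{\mathcal{O}}\big)\le \Vert F\Vert_{D,\mathcal{O}}\Vert G\Vert_{D,\mathcal{O}}.$$

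The only slightly delicate point is the $C^1$-in-$\xi$ part of the norm, which is not submultiplicative term by term. It is handled by the Leibniz bound above, because the norm uses the sum $|f|+|\partial_\xi f|$ rather than a weighted combination. The rest is routine bookkeeping justified by absolute convergence.
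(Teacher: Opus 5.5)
Your proposal is correct and follows the same route as the paper: you expand $FG$ via the convolution formula for the Taylor--Fourier coefficients, apply the triangle inequality, and resum. You also supply the two details that the paper's final one-line inequality leaves implicit. The first is the Leibniz bound $|fg|_{\mathcal{O}}\le|f|_{\mathcal{O}}|g|_{\mathcal{O}}$, which relies on the definition using the sum $|f|+|\partial_\xi f|$. The second is the submultiplicativity $e^{|k+k'||{\rm Im}\theta|}\le e^{|k||{\rm Im}\theta|}e^{|k'||{\rm Im}\theta|}$.
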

\begin{proof}
By simple computation, one gets
$$(FG)(\theta, I,q,\bar{q};\xi)=\sum_{k\in\mathbb{Z}^b, l\in\mathbb{N}^b,\alpha,\beta\in \mathbb{N}^{\mathbb{Z}_1}} (FG)_{kl\alpha\beta}(\xi)e^{{\bf i}\langle k,\theta\rangle}I^lq^\alpha\bar{q}^\beta$$
 with
$$(FG)_{kl\alpha\beta}(\xi)=\sum_{\substack{k^1+k^2=k, l^1+l^2=l\\\alpha^1+\alpha^2=\alpha, \beta^1+\beta^2=\beta}}F_{k^1l^1\alpha^1\beta^1}(\xi)G_{k^2l^2\alpha^2\beta^2}(\xi).$$
It follows that
\begin{align*}
\Vert FG\Vert_{D,\mathcal{O}}=&\sum_{\substack{k\in\mathbb{Z}^b, l\in\mathbb{N}^b\\\alpha,\beta\in \mathbb{N}^{\mathbb{Z}_1}}} |(FG)_{kl\alpha\beta}|_{\mathcal{O}}|I|^le^{|k||{\rm Im}\theta|}|q^\alpha||\bar{q}^\beta|\\
=&\sum_{\substack{k\in\mathbb{Z}^b, l\in\mathbb{N}^b\\\alpha,\beta\in \mathbb{N}^{\mathbb{Z}_1}}} \Bigg|\sum_{\substack{k^1+k^2=k, l^1+l^2=l\\\alpha^1+\alpha^2=\alpha, \beta^1+\beta^2=\beta}}F_{k^1l^1\alpha^1\beta^1}(\xi)G_{k^2l^2\alpha^2\beta^2}(\xi)\Bigg|_{\mathcal{O}}|I|^le^{|k||{\rm Im}\theta|}|q^\alpha||\bar{q}^\beta|\\
\le&  \Vert F \Vert_{D,\mathcal{O}} \Vert G \Vert_{D,\mathcal{O}}.
\end{align*}
\end{proof}

\begin{lem}[Generalized Cauchy Inequalities, \cite{GYZ2014}]\label{Cauchy}
The components of the Hamiltonian vector field $X_F$ satisfy: for any $0<r^\prime <r, 0<\rho^\prime <\rho$,
\begin{align*}
&\Vert \partial_\theta F\Vert_{D_{d,\rho}(r^\prime,s)}\le \frac{c}{r-r^\prime}\Vert F\Vert_{D},\\
& \Vert \partial_I F\Vert_{D_{d,\rho}(r,s/2)}\le \frac{c}{s^2}\Vert F\Vert_{D},\\
&\sup_{D_{d,\rho}(r,s/2)}\sum_{n\in\mathbb{Z}_1}(\Vert \partial_{q_n}F\Vert_{\mathcal{O}}+\Vert \partial_{\bar{q}_n}F\Vert_{\mathcal{O}})\langle n\rangle^de^{|n|\rho^\prime}\le \frac{c}{s(\rho-\rho^\prime)}\Vert F\Vert_{D}.
\end{align*}
\end{lem}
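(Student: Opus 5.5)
The approach is to prove the three estimates in Lemma \ref{Cauchy} one at a time, directly from the Taylor--Fourier expansion and the definition of $\Vert\cdot\Vert_{D,\mathcal{O}}$, which is a weighted $\ell^1$ norm of the coefficients. Because the norm is a sum of absolute values of coefficients times monomial weights, each derivative acts termwise on the coefficients. Each estimate then reduces to a one-variable inequality for the extra factor that differentiation produces in front of the weight. The parameter dependence plays no role: $|\cdot|_{\mathcal{O}}$ is applied to the coefficients, and differentiation in $(\theta,I,q,\bar q)$ commutes with $\partial_\xi$.

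\emph{The $\theta$-derivative.} Differentiating in $\theta$ multiplies $F_{kl\alpha\beta}$ by ${\bf i}k$. On $|{\rm Im}\,\theta|<r'$ we bound
\[
|k|e^{|k|r'}\le \sup_{x\ge0} x e^{-x(r-r')}\, e^{|k|r}=\frac{1}{e(r-r')}\,e^{|k|r}.
\]
Summing over all indices gives the first inequality with $c=1/e$.

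\emph{The $I$-derivative.} Differentiating in $I$ multiplies the coefficient by $l_j$ and lowers $I^l$ by one power. On $|I|<s^2/4$ we use $l_j|I|^{l-e_j}\le \frac{4}{s^2}\,|l|\,4^{-(|l|-1)}(s^2)^{|l|}$ together with $\sup_{n\ge 1} n4^{1-n}\le1$. This gives $c\le 4$ times $\Vert F\Vert_D$ divided by $s^2$. Equivalently, one may compare the weights at radii $s^2/4$ and $s^2$ via the one-variable Cauchy estimate applied to $t\mapsto\sum |F_{kl\alpha\beta}||I|^l\cdots$.

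\emph{The $q,\bar q$ part (the main point).} The key identity is
\[
\sum_n \big|\partial_{q_n}(q^\alpha)\big|\,w_n=\sum_n \alpha_n |q^{\alpha-e_n}|\,w_n,
\]
where $w_n=\langle n\rangle^d e^{|n|\rho'}$. We write $w_n=\langle n\rangle^d e^{|n|\rho}\cdot e^{-|n|(\rho-\rho')}$ and choose the test point $|q_n|$ so that the weighted sum is at most $s/2$. The plan is to pass to the enlarged configuration $\tilde q_n=|q_n|e^{|n|(\rho-\rho')}\cdots$. More simply, we use the standard trick: for fixed $q$ with $\Vert q\Vert_{d,\rho'}<s/2$, the quantity $\sum_n|q_n|\partial_{|q_n|}$ applied to the majorant series $\widehat F(|q|)=\sum|F_{\cdots}||q^\alpha||\bar q^\beta|$ is bounded by $\frac{d}{dt}\widehat F(t|q|)\big|_{t=1}$. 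A one-variable Cauchy estimate on the disc $t\le 2$ bounds this by $\sup_{D}\widehat F=\Vert F\Vert_D$. The weights $\langle n\rangle^de^{|n|\rho'}$ are absorbed by noting that $\sum_n|\partial_{q_n}\widehat F|\,\langle n\rangle^de^{|n|\rho'}$ is the directional derivative of $\widehat F$ in the direction $h_n=\langle n\rangle^d e^{|n|\rho'}$ (normalized). We then use the supremum over directions $h$ with $\Vert h\Vert_{d,\rho}\le s/2$, and here the loss $\rho\to\rho'$ is used to ensure that $h$ lies in the domain.

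The expected obstacle is exactly this last bookkeeping. The natural unit direction $\delta_n$ has $\Vert\delta_n\Vert_{d,\rho}=\langle n\rangle^de^{|n|\rho}$, while the target weight is $\langle n\rangle^de^{|n|\rho'}$. One must check that the ratio $e^{-|n|(\rho-\rho')}$ supplies the factor $(\rho-\rho')^{-1}$, and not something worse, when the sup over directions is converted into the sum over $n$. If a direct argument is awkward, the fallback is simply to cite \cite{GYZ2014}, whose setting of $l^1_{d,\rho}$ norms with the same weighted majorant norm is identical, as the lemma's attribution indicates.
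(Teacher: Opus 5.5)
The first two estimates are fine. The bound $|k|e^{|k|r'}\le \frac{1}{e(r-r')}e^{|k|r}$ gives the $\theta$-inequality. For the $I$-inequality, use the version you call equivalent: convexity of the nonnegative majorant along a ray starting from $|I|<s^2/4$. That is the clean argument, because $\Vert F\Vert_D$ is a supremum of the whole sum, not a sum of termwise suprema.

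The gap is the third estimate, exactly at the step you flagged, and it cannot be closed. The sequence $h_n=\langle n\rangle^d e^{|n|\rho'}$ has $\Vert h\Vert_{d,\rho}=\sum_n\langle n\rangle^{2d}e^{|n|(\rho+\rho')}=\infty$, so no normalization puts it in the domain. Lowering $\rho$ to $\rho'$ does not help, because the weight is on the wrong side of the pairing. The radial-derivative trick only controls $\sum_n|q_n|\,\partial_{q_n}\widehat F$, i.e. the gradient paired with the point itself. What a Cauchy argument on the majorant actually gives is a dual bound. Starting from a point of $D_{d,\rho}(r,s/2)$, a step of size $t=\frac{s}{2}\langle n\rangle^{-d}e^{-|n|\rho}$ in the $n$-th coordinate stays in $D$, and convexity then yields
\[
\sup_{D_{d,\rho}(r,s/2)}\Vert \partial_{q_n}F\Vert_{\mathcal{O}}\le \frac{2}{s}\,\langle n\rangle^{d}e^{|n|\rho}\,\Vert F\Vert_{D}.
\]
This bound grows in $n$, so it cannot be summed against $\langle n\rangle^d e^{|n|\rho'}$.

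This is not a bookkeeping issue: the third inequality is false as written. Take $F=q_N$ with $N\in\mathbb{Z}_1$. Then $\Vert F\Vert_D=s\langle N\rangle^{-d}e^{-\rho|N|}$, while the left-hand side equals $\langle N\rangle^{d}e^{\rho'|N|}$. The inequality would therefore force $\langle N\rangle^{2d}e^{(\rho+\rho')|N|}\le c/(\rho-\rho')$ for every $N\in\mathbb{Z}_1$, which fails for large $|N|$. So neither your argument nor the fallback of citing (which is all the paper does) can establish it in this form. What survives for general $F$ is the pointwise bound above. A summed weighted estimate for the $q$-components of $X_F$ needs extra structure on $F$, such as coefficient decay $e^{-\rho n^*_{\alpha\beta}}$ as in Assumption D, and then costs a $d$-dependent power of $(\rho-\rho')^{-1}$. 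Compare how the $q$-part of $X_F$ is estimated directly from coefficient decay in the proof of Lemma~\ref{gfpl}.
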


\begin{lem}[\cite{GYZ2014}]\label{czlem}
If $\Vert X_F\Vert_{D}<\varepsilon^\prime, \Vert X_G\Vert_{D}<\varepsilon^{\prime\prime}$, then
$$\Vert X_{\{F,G\}}\Vert_{D_{d,\rho}(r-\sigma,s-\delta)}<c\sigma^{-1}\delta^{-2}\varepsilon^\prime\varepsilon^{\prime\prime}$$
for any $0<\sigma<r$ and $0<\delta<s$.
\end{lem}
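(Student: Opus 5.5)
We prove the estimate in two stages, first for $\{F,G\}$ as a scalar function and then for its vector field. We expand the Poisson bracket into its constituent products: the $(\theta,I)$ part $\langle \partial_I F,\partial_\theta G\rangle-\langle\partial_\theta F,\partial_I G\rangle$ and the normal part ${\bf i}\sum_{n}(\partial_{q_n}F\,\partial_{\bar q_n}G-\partial_{\bar q_n}F\,\partial_{q_n}G)$. Each component of $X_{\{F,G\}}$ is a derivative ($\partial_I$, $\partial_\theta$, $\partial_{q_n}$ or $\partial_{\bar q_n}$) of these products. By the Leibniz rule, each component is a sum of terms of the schematic form (second derivative of $F$) $\times$ (first derivative of $G$), plus the symmetric terms with $F$ and $G$ exchanged.

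The first derivatives are controlled directly by the hypotheses. By the definition of $\Vert X_F\Vert_D$ we have
\begin{equation*}
\Vert\partial_I F\Vert_{D}\le\varepsilon',\qquad \Vert\partial_\theta F\Vert_D\le s^2\varepsilon',\qquad \sup_D\sum_n(\Vert\partial_{q_n}F\Vert+\Vert\partial_{\bar q_n}F\Vert)\langle n\rangle^de^{|n|\rho}\le s\,\varepsilon',
\end{equation*}
and the same bounds hold for $G$ with $\varepsilon''$. Combining the Banach algebra property of $\Vert\cdot\Vert_{D,\mathcal O}$ with these bounds, the pieces of $\{F,G\}$ itself are bounded on $D$ by $s^2\varepsilon'\varepsilon''$. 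For the normal part we pair the weighted $\ell^1$ sums and use $\langle n\rangle^{d}e^{|n|\rho}\ge1$, which gives $s\varepsilon'\cdot s\varepsilon''$.

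For the second derivatives we apply the generalized Cauchy inequalities of Lemma~\ref{Cauchy}, not to $F$ but to its first-derivative components. We shrink $r$ to $r-\sigma$, which costs a factor $\sigma^{-1}$ for $\partial_\theta$. We shrink $s$ to $s-\delta$, which costs $\delta^{-1}$ for $\partial_q$ and $\delta^{-2}$ (relative to $s^2$) for $\partial_I$. Since $\delta<s$, the weights $1/s$ and $1/s^2$ in $\Vert X\Vert$ can be absorbed into the factor $\delta^{-2}$. Bookkeeping each of the $I$, $\theta$ and $q$ components of $X_{\{F,G\}}$ then yields a bound of the form $c(\sigma^{-1}+\delta^{-1}+\delta^{-2})\varepsilon'\varepsilon''\le c\sigma^{-1}\delta^{-2}\varepsilon'\varepsilon''$, using $\sigma<r$ and $\delta<s$, which we may assume are bounded by $1$.

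The main obstacle is the normal component with its weighted sum over $n$. A term such as $\sum_n\langle n\rangle^de^{|n|\rho}\big|\partial_{q_n}\sum_m\partial_{q_m}F\,\partial_{\bar q_m}G\big|$ produces a double sum whose weight sits on the index $n$ of the differentiated factor $\partial^2_{q_nq_m}F$. We handle it by applying the Cauchy estimate to the function $\partial_{\bar q_m}G$ viewed as an analytic function of $q$ on the smaller domain. More precisely, we regard $\sum_m \partial_{\bar q_m}G\cdot(\cdot)$ as a directional derivative along the vector $(\partial_{\bar q_m}G)_m$, whose $\ell^1_{d,\rho}$ norm is at most $s\varepsilon''$. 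The weighted $q$-gradient of a directional derivative is then bounded, via the Cauchy formula in one complex variable along that direction, by $\delta^{-1}$ times the weighted gradient norm on the larger domain. No loss in $\rho$ is needed, because the Cauchy step is taken in the radial direction $s$ and not in the weight.

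With these bounds assembled, collecting all components gives $\Vert X_{\{F,G\}}\Vert_{D_{d,\rho}(r-\sigma,s-\delta)}<c\sigma^{-1}\delta^{-2}\varepsilon'\varepsilon''$.
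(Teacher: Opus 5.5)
The paper gives no proof of this lemma; it quotes it from \cite{GYZ2014}, where such estimates follow from $X_{\{F,G\}}=\pm[X_F,X_G]$ together with Cauchy estimates for $DX_F\cdot X_G$. Your Leibniz expansion is that argument written in coordinates, and its key device is right. You treat $\sum_m\partial_{\bar q_m}G\,\partial_{q_m}(\cdot)$ as a derivative of the already weighted vector $(\partial_{q_n}F)_n$ along a direction of $\ell^1_{d,\rho}$-size at most $s\varepsilon''$. This costs only $s\varepsilon''/\delta$, and it avoids the $(\rho-\rho')^{-1}$ loss in the third inequality of Lemma~\ref{Cauchy}, a loss you cannot afford because $\rho$ is not shrunk here. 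One technical adjustment is needed, because $\Vert\cdot\Vert_{\mathcal O}$ is a majorant norm rather than a modulus. Replace the one-variable Cauchy formula by its positive-coefficient analogue: for a majorant series $\tilde f$ and $a\ge 0$, the map $t\mapsto\tilde f(x+ta)$ is convex and increasing, so $\sum_m a_m\partial_{x_m}\tilde f(x)\le t_0^{-1}\tilde f(x+t_0a)$. Applied pointwise, this also justifies using Lemma~\ref{Cauchy} on the vector $(\partial_{q_n}F)_n$ with the weighted sum inside $\sup_D$.

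The step that fails is ``since $\delta<s$, the weights $1/s$ and $1/s^2$ can be absorbed into $\delta^{-2}$''. By definition, $\Vert X_{\{F,G\}}\Vert_{D_{d,\rho}(r-\sigma,s-\delta)}$ carries the weights of the smaller domain: $(s-\delta)^{-2}$ on $\partial_\theta\{F,G\}$ and $(s-\delta)^{-1}$ on the $q$-part. Your own bounds are
\[
\Vert\partial_\theta\{F,G\}\Vert\le c\sigma^{-1}s^2\varepsilon'\varepsilon'',\qquad \Vert\partial_I\{F,G\}\Vert\le c(\sigma^{-1}+s\delta^{-1})\varepsilon'\varepsilon'',\qquad \sup\sum_n(\cdots)\langle n\rangle^de^{|n|\rho}\le c(s\sigma^{-1}+s^2\delta^{-1})\varepsilon'\varepsilon''.
\]
They therefore give only
\[
\Vert X_{\{F,G\}}\Vert_{D_{d,\rho}(r-\sigma,s-\delta)}\le c\Big(\frac{s^2}{(s-\delta)^2}\,\sigma^{-1}+\frac{s}{s-\delta}\Big(\sigma^{-1}+\frac{s}{\delta}\Big)\Big)\varepsilon'\varepsilon''.
\]
These prefactors blow up as $\delta\to s$, while $\sigma^{-1}\delta^{-2}$ stays bounded.

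This is not a weakness of your method; the statement itself fails there. Take $F=I_1$ and $G=g\,e^{{\bf i}\theta_1}$. Then $\Vert X_F\Vert_D=1$ and $\Vert X_G\Vert_D=ge^{r}/s^2$. The only nonzero component of $X_{\{F,G\}}$ comes from $\partial_{\theta_1}\{F,G\}=-g\,e^{{\bf i}\theta_1}$, so $\Vert X_{\{F,G\}}\Vert_{D_{d,\rho}(r-\sigma,s-\delta)}=ge^{r-\sigma}/(s-\delta)^2$. This violates the claimed bound for $\delta$ close to $s$.

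So the lemma, and your proof, need a restriction such as $\delta\le s/2$. They also need either $r,s\le 1$ or a constant depending on $r,s$; your ``we may assume bounded by $1$'' is an extra hypothesis, not a reduction. Under these conditions your argument gives $c(\sigma^{-1}+\delta^{-1})\varepsilon'\varepsilon''\le c\sigma^{-1}\delta^{-2}\varepsilon'\varepsilon''$. This is the only regime in which the paper uses the lemma, with $\delta$ a fraction of $s-s_+\ll s$.
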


\begin{defn}
The function $F(\theta, I,q,\bar{q};\xi)$ is said to have gauge invariance if
$$F_{kl\alpha\beta}\equiv 0,\quad {\rm when}\ k_1+k_2+\cdots+k_b+|\alpha|-|\beta|\neq 0.$$
\end{defn}

\begin{lem}
If both $F$ and $G$ have gauge invariance, then $\{F,G\}$ has gauge invariance.
\end{lem}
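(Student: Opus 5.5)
The plan is to argue monomial by monomial. The Poisson bracket is bilinear, so I will expand $F$ and $G$ in their Taylor–Fourier series,
\[
F=\sum F_{kl\alpha\beta}I^l e^{{\bf i}\langle k,\theta\rangle}q^\alpha\bar q^\beta ,\qquad G=\sum G_{k'l'\alpha'\beta'}I^{l'} e^{{\bf i}\langle k',\theta\rangle}q^{\alpha'}\bar q^{\beta'} .
\]
It then suffices to show that the bracket of two gauge-invariant monomials is a sum of gauge-invariant monomials. To make this precise I attach to each monomial the integer weight
\[
\mathcal{W}(k,\alpha,\beta)=k_1+\cdots+k_b+|\alpha|-|\beta| .
\]
Gauge invariance of a function means exactly that every monomial appearing in it has weight $0$.

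Next I compute the bracket of $M=I^le^{{\bf i}\langle k,\theta\rangle}q^\alpha\bar q^\beta$ and $N=I^{l'}e^{{\bf i}\langle k',\theta\rangle}q^{\alpha'}\bar q^{\beta'}$ term by term, using the definition of $\{\cdot,\cdot\}$ (with the intended second factor $G$ in the $q$-part).

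The action–angle part is $\langle\partial_I M,\partial_\theta N\rangle-\langle\partial_\theta M,\partial_I N\rangle$. Differentiating in $I_j$ only lowers $l_j$ by one. Differentiating in $\theta_j$ only multiplies by ${\bf i}k_j$ or ${\bf i}k'_j$. So every resulting term is a scalar multiple of $I^{l+l'-e_j}e^{{\bf i}\langle k+k',\theta\rangle}q^{\alpha+\alpha'}\bar q^{\beta+\beta'}$. Its weight is $\mathcal{W}(k,\alpha,\beta)+\mathcal{W}(k',\alpha',\beta')=0$.

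For the normal-coordinate part, the term ${\bf i}\,\partial_{q_n}M\,\partial_{\bar q_n}N$ is a multiple of
\[
I^{l+l'}e^{{\bf i}\langle k+k',\theta\rangle}q^{\alpha+\alpha'-e_n}\bar q^{\beta+\beta'-e_n},
\]
and it vanishes unless $\alpha_n\ge1$ and $\beta'_n\ge1$. Its weight is
\[
\mathcal{W}(k,\alpha,\beta)+\mathcal{W}(k',\alpha',\beta')-1+1=0 .
\]
The symmetric term ${\bf i}\,\partial_{\bar q_n}M\,\partial_{q_n}N$ is handled the same way. The only bookkeeping needed is that the weight is additive under multiplication and is preserved when a single $q_n$ and a single $\bar q_n$ are removed together.

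Finally, I sum over all pairs of monomials and over $n\in\mathbb{Z}_1$. The coefficient $\{F,G\}_{kl\alpha\beta}$ is a (convergent, by the analyticity and Banach-algebra estimates) sum of contributions from weight-zero monomials only. Hence $\{F,G\}_{kl\alpha\beta}=0$ whenever $\mathcal{W}(k,\alpha,\beta)\neq0$, which is gauge invariance.

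There is no real obstacle here. The one point needing care is the justification for exchanging the bracket with the infinite sums. This follows from absolute convergence in $\Vert\cdot\Vert_{D,\mathcal{O}}$ on a slightly smaller domain, via the Cauchy estimates of Lemma \ref{Cauchy}.
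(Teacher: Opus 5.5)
Your proposal is correct and follows essentially the same route as the paper. The paper also expands $F$ and $G$ in Fourier--Taylor series, keeping the $I$-dependence inside the coefficients $F_{k\alpha\beta}(I;\xi)$, and writes the coefficient $\{F,G\}_{k\alpha\beta}$ as a sum over splittings $k^1+k^2=k$, $\alpha^1+\alpha^2=\alpha$, $\beta^1+\beta^2=\beta$, with the shifts $\pm e_n$ coming from the $q_n,\bar q_n$ contraction. It then observes that when $\sum_i k_i+|\alpha|-|\beta|\neq 0$ the two factors cannot both satisfy the gauge condition, which is exactly your additivity of the weight $\mathcal{W}$. Your correction of the typo in the bracket (the second factor should be $G$) and your justification of the termwise computation are details the paper leaves implicit.
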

\begin{proof}
Consider functions
$$F(\theta, I,q,\bar{q};\xi)=\sum_{k\in\mathbb{Z}^b,\alpha,\beta\in \mathbb{N}^{\mathbb{Z}_1}} F_{k\alpha\beta}(I;\xi)e^{{\bf i}\langle k,\theta\rangle}q^\alpha\bar{q}^\beta,$$
$$ G(\theta, I,q,\bar{q};\xi)=\sum_{k\in\mathbb{Z}^b, \alpha,\beta\in \mathbb{N}^{\mathbb{Z}_1}} G_{k\alpha\beta}(I;\xi)e^{{\bf i}\langle k,\theta\rangle}q^\alpha\bar{q}^\beta.$$
By gauge invariance, one gets
$$F_{k\alpha\beta}(I;\xi)=G_{k\alpha\beta}(I;\xi)=0,\quad {\rm when}\ k_1+k_2+\cdots+k_b+|\alpha|-|\beta|\neq 0.$$
By computation, we have
$$\{ F,G\}(\theta, I,q,\bar{q};\xi)=\sum_{k\in\mathbb{Z}^b,\alpha,\beta\in \mathbb{N}^{\mathbb{Z}_1}} \{F,G\}_{k\alpha\beta}(I;\xi)e^{{\bf i}\langle k,\theta\rangle}q^\alpha\bar{q}^\beta$$
with
\begin{align*}
 \{F,G\}_{k\alpha\beta}(I;\xi)=\sum_{\substack{k^1+k^2=k\\\alpha^1+\alpha^2=\alpha, \beta^1+\beta^2=\beta}}&\Big({\bf i} k^1 F_{k^1\alpha^1\beta^1}(I;\xi)\cdot \partial_I G_{k^2\alpha^2\beta^2}(I;\xi)\\
 &-\partial_I F_{k^1\alpha^1\beta^1}(I;\xi)\cdot {\bf i}k^2 G_{k^2\alpha^2\beta^2}(I;\xi)\Big)\\
 + \sum_{\substack{k^1+k^2=k\\\alpha^1+\alpha^2=\alpha, \beta^1+\beta^2=\beta}}\sum_n&\Big(F_{k^1(\alpha^1+e_n)\beta^1}(I;\xi) G_{k^2\alpha^2(\beta^2+e_n)}(I;\xi)\\
 &-F_{k^1\alpha^1(\beta^1+e_n)}(I;\xi) G_{k^2(\alpha^2+e_n)\beta^2}(I;\xi)\Big).
\end{align*}
In the following, we show $\{F,G\}$ has gauge invariance. Actually, if $\sum_{i=1}^b k_i+|\alpha|-|\beta|\neq 0$, then the followings can not happen
$$\sum_{i=1}^b k_i^1+|\alpha^1|-|\beta^1|=\sum_{i=1}^b k_i^2+|\alpha^2|-|\beta^2|=0,$$
$$\sum_{i=1}^b k_i^1+|\alpha^1+e_n|-|\beta^1|=\sum_{i=1}^b k_i^2+|\alpha^2|-|\beta^2+e_n|=0$$
or
$$\sum_{i=1}^b k_i^1+|\alpha^1|-|\beta^1+e_n|=\sum_{i=1}^b k_i^2+|\alpha^2+e_n|-|\beta^2|=0.$$
It implies that $ \{F,G\}_{k\alpha\beta}(I;\xi)=0$.
\end{proof}





\subsection{The abstract KAM theorem}\

Consider the Hamiltonian functions
\begin{equation}\label{2.1}
H=N+P=e(\xi)+\langle \omega(\xi),I\rangle+\sum_{n\in\mathbb{Z}_1}\Omega_n(\xi)q_n\bar{q}_n+P(\theta, I, q,\bar{q};\xi)
\end{equation}
defined on the domain $D=D_{d,\rho}(r,s)$, parametrized by $\xi \in \mathcal{O}=\left[-\frac{1}{10},\frac{1}{10}\right]^b$. The Hamiltonian is associated with the symplectic structure $\sum_{n\in \mathcal{J}}d I_n\wedge d\theta_n+{\bf i}\sum_{n\in \mathbb{Z}_1}dq_n\wedge d \bar{q}_n$. We want to show if the perturbation is small enough, the Hamiltonian $H$ possesses invariant tori for most $\xi\in \mathcal{O}$.

In the following, we introduce some conditions:

\noindent {\bf Assumption A} (Asymptotics of tangential frequencies): For each $n\in\mathcal{J}$, $\omega_n(\xi)=n+\omega^*_n(\xi)$ with $\left|\frac{\partial \omega^*_n}{\partial \xi_j}-\delta_{jn}\right|\leq  \frac{1}{4b}$.

\noindent {\bf Assumption B} (Asymptotics of normal frequencies): For each $n\in \mathbb{Z}_1$, $\Omega_n(\xi)=n+\Omega^*_n(\xi)$ with $|\Omega_n-\Omega_m|\geq \frac{2}{3} (m\neq n)$ and $\left|\frac{\partial \Omega^*_n}{\partial \xi_j}\right|\leq  \frac{1}{4b}$.

\noindent {\bf Assumption C} (Regularity of the perturbation): The perturbation $P$ is real analytic in $(\theta, I,q,\bar{q})\in D$ and $C^1$-smooth about the parameter $\xi\in\mathcal{O}$.

\noindent {\bf Assumption D} (Decay property of the perturbation): The perturbation $P$ can be written as follows
$$P=P^{low}+P^{high},$$
with
$$P^{low}:=\sum_{\substack{\alpha,\beta\in \mathbb{N}^{\mathbb{Z}_1}\\|\alpha+\beta|\le 2}}P^{low}_{\alpha\beta}(\theta,I;\xi)q^\alpha \bar{q}^\beta=\sum_{\substack{k\in \mathbb{Z}^b,l\in \mathbb{N}^b,\alpha,\beta\in \mathbb{N}^{\mathbb{Z}_1}\\2|l|+|\alpha+\beta|\le 2}}P_{kl\alpha\beta}I^l e^{{\bf i}\langle k,\theta\rangle }q^\alpha \bar{q}^\beta,$$
and
$$P^{high}=\sum_{\substack{k\in \mathbb{Z}^b,l\in \mathbb{N}^b,\alpha,\beta\in \mathbb{N}^{\mathbb{Z}_1}\\2|l|+|\alpha+\beta|\geq 3}}P_{kl\alpha\beta}I^l e^{{\bf i}\langle k,\theta\rangle }q^\alpha \bar{q}^\beta.$$
In addition, $P^{high}$ has the decomposition
$$P^{high}=\dot{P}+\ddot{P},$$
with
$$\dot{P}(q,\bar{q};\xi)=\sum_{\substack{\alpha,\beta\in \mathbb{N}^{\mathbb{Z}_1} \\|\alpha|=|\beta|=2}}\dot{P}_{\alpha\beta}(\xi)q^\alpha\bar{q}^\beta,$$
$$\ddot{P}(\theta, I, q,\bar{q};\xi)=\sum_{\alpha,\beta\in \mathbb{N}^{\mathbb{Z}_1}}\ddot{P}_{\alpha\beta}(\theta, I;\xi)q^\alpha\bar{q}^\beta.$$
The coefficients of $P$ satisfies
$$\Vert P^{low}_{\alpha\beta}\Vert_{D,\mathcal{O}}\le \varepsilon e^{-\rho n^*_{\alpha\beta}},$$
$$
\Vert \dot{P}_{\alpha\beta}\Vert_{D,\mathcal{O}}\le
e^{-\rho(n_{\alpha\beta}^+-n_{\alpha\beta}^-)},\quad\Vert \ddot{P}_{\alpha\beta}\Vert_{D,\mathcal{O}}\le
e^{-\rho n_{\alpha\beta}^*}.
$$

 \noindent {\bf Assumption E} (Gauge invariance of $P$): For
$$P=\sum_{\substack{k\in\mathbb{Z}^b, l\in\mathbb{N}^b,\alpha,\beta\in \mathbb{N}^{\mathbb{Z}_1}}}P_{kl\alpha\beta}I^le^{{\bf i}\langle k,\theta\rangle}q^\alpha\bar{q}^\beta,$$
 we have
 $$P_{kl\alpha\beta}\equiv 0, \quad {\rm if}\quad \sum_{j=1}^bk_j+|\alpha|-|\beta|\neq 0.$$

 \begin{thm}\label{main theorem}
 Suppose the Hamiltonian $H$ in \eqref{2.1} satisfies the assumptions {\bf Assumption A-E}. There exists a positive constant $\varepsilon_0=\varepsilon_0(r,s,d,\rho,\mathcal{J})$ such that if
 $$\Vert X_{P^{low}}\Vert_{D,\mathcal{O}}=\varepsilon<\varepsilon_0,\quad \Vert X_{P^{high}}\Vert_{D,\mathcal{O}}<1.$$
 then there exists a Cantor set $\mathcal{O}_\varepsilon\subset \mathcal{O}$ with $$ \mathrm{meas}(\mathcal{O}\setminus \mathcal{O}_\varepsilon)\leq c\varepsilon^{\frac{1}{16}}, $$  such that the following holds:

 (1) There is a $C^1$ map $\omega^*:\mathcal{O}_\varepsilon\rightarrow \mathbb{R}^b$ such that
 $$|\omega^*-\omega|_{\mathcal{O}_\varepsilon}\le c\varepsilon^{\frac{5}{6}}.$$

 (2)  There exists a map $\Phi: \mathbb{T}^b\times\mathcal{O}_\varepsilon\rightarrow D_{d,0}(\frac{r}{2},\frac{s}{2})$, real analytic in $\theta$ and $C^1$ parametrized by $\xi\in \mathcal{O}_\varepsilon$,
 such that
 $$\Vert\Phi-\Phi_0\Vert_{ D_{d,0}(\frac{r}{2},\frac{s}{2}),\mathcal{O}_\varepsilon}\le c\varepsilon^{\frac{5}{6}},$$
  where $\Phi^0$ is the
 trivial embedding $ \mathbb{T}^b\times\mathcal{O}_\varepsilon\rightarrow \mathbb{T}^b\times \{0\}\times\{0\}\times\{0\}$.

 (3) For any $\theta\in \mathbb{T}^b$ and $\xi\in \mathcal{O}_\varepsilon$, $\Phi(\theta+\omega^* t,\xi)=(\theta+\omega^* t,,I(t),q(t),\bar{q}(t))$ is a $b$-frequency quasi-periodic solution of equations of motion associated with \eqref{2.1}.

 (4) For each $t$, $q(t)=(q_n(t))_{n\in\mathbb{Z}}\in l_{d,0}^1(\mathbb{Z}).$
 \end{thm}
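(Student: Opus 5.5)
I will prove Theorem \ref{main theorem} by a Newton-type KAM iteration in the spirit of Kolmogorov, following the scheme of \cite{GYZ2014} but keeping a normal form that retains the quartic part $\dot P$. At the $\nu$-th step the Hamiltonian is $H_\nu=N_\nu+P_\nu$ with $N_\nu=e_\nu+\langle\omega_\nu,I\rangle+\sum\Omega_n^\nu q_n\bar q_n$ and $\|X_{P_\nu^{low}}\|_{D_\nu,\mathcal O_\nu}\le\varepsilon_\nu$, with $\varepsilon_{\nu+1}=\varepsilon_\nu^{4/3}$, and with $r_\nu,s_\nu,\rho_\nu$ shrinking geometrically ($\rho_\nu\downarrow\rho/2$ say, and $\rho$ eventually traded for $0$ via the polynomial weight $\langle n\rangle^d$). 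I look for a Hamiltonian $F_\nu$ of the same form as $P^{low}_\nu$ (degree $\le2$ in $(q,\bar q)$, degree $\le1$ in $I$) solving the homological equation
\[
\{N_\nu,F_\nu\}+P^{low}_\nu-[P^{low}_\nu]=0,
\]
where $[\,\cdot\,]$ denotes the terms $P_{0l00}I^l$ and $P_{0,0,e_n,e_n}q_n\bar q_n$. By gauge invariance (Assumption E, preserved under brackets by the lemma above), the only coefficients to solve are those with divisors $\langle k,\omega\rangle$, $\langle k,\omega\rangle\pm\Omega_n$, $\langle k,\omega\rangle\pm(\Omega_n-\Omega_m)$ and $\langle k,\omega\rangle\pm(\Omega_n+\Omega_m)$, the last forbidden when $\sum k_j=\pm2$ fails. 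The time-one map $X_{F_\nu}^1$ then yields $P_{\nu+1}=$ remainder terms, $\{P^{high}_\nu,F_\nu\}$, and higher brackets. The point of Kolmogorov's scheme is that $\{\dot P,F_\nu\}$ is not small relative to $\varepsilon_\nu$ in the $q$-quadratic part, so I will instead absorb the quadratic contribution of $\{P^{high},F\}$ into the next $P^{low}$ and bound it using $s_\nu$ small. Concretely, I choose $s_{\nu+1}^2\sim\varepsilon_\nu^{1/3}s_\nu^2$, so that terms of order $\ge3$ evaluated on $D(r_{\nu+1},s_{\nu+1})$ gain a power of $s$. Lemma \ref{czlem} gives $\|X_{\{P^{high},F\}}\|\lesssim\sigma^{-1}\delta^{-2}\varepsilon_\nu$ with a small prefactor, and Lemma \ref{Cauchy} converts sup-norms to vector-field norms. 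The coefficient decay $e^{-\rho n^*}$ and $e^{-\rho(n^+-n^-)}$ is propagated by direct estimation on the coefficient formula for brackets, losing a little in $\rho$ each step.

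The small-divisor step is carried out as follows. For $|k|\le K_\nu$ I impose
\[
|\langle k,\omega_\nu\rangle+l\cdot\Omega_\nu|\ge\frac{\gamma_\nu}{(1+|k|)^{\tau}}
\]
for $|l|\le2$. Measure estimates rely on Assumptions A–B: since $\partial_\xi\omega=\mathbb I+O(1/(4b))$ and $|\partial_\xi\Omega_n|\le 1/(4b)$, the directional derivative of the divisor along $k/|k|$ is at least $|k|/2$ in a suitable norm. Each resonant set then has measure $\lesssim\gamma_\nu|k|^{-\tau-1}$. The infinitely many $n,m$ are handled by truncation. For $\langle k,\omega\rangle\pm\Omega_n$, the asymptotics $\Omega_n\approx n$ and $|\omega|\lesssim|k|\max|\mathcal J|$ force $|n|\lesssim|k|$ unless the divisor is trivially $\ge1$. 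For the difference $\Omega_n-\Omega_m=n-m+O(1)$ one similarly gets $|n-m|\lesssim|k|$, but $n$ itself is unbounded, and this is the main obstacle. Here I will use the coefficient decay: terms with $n^*_{\alpha\beta}>N_\nu$ (with $N_\nu\sim|\ln\varepsilon_\nu|/\rho$) are already of size $\varepsilon_\nu e^{-\rho N_\nu}\le\varepsilon_{\nu+1}$ and are simply left in the perturbation, so only $|n|,|m|\le N_\nu$ need nonresonance. The total excluded measure is then
\[
\sum_\nu\gamma_\nu K_\nu^{b}N_\nu^2,
\]
and choosing $\gamma_\nu\sim\varepsilon_\nu^{1/8}$ (or so) gives the stated $\varepsilon^{1/16}$ after summation.

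The new frequencies are $\omega_{\nu+1}=\omega_\nu+\partial_I[P]$ and $\Omega^{\nu+1}_n=\Omega^\nu_n+P_{0,0,e_n,e_n}$, changing by $O(\varepsilon_\nu)$ in $C^1_W$. Hence Assumptions A–B persist with slightly degraded constants, e.g. $\frac{2}{3}$ replaced by $\frac23-\sum\varepsilon_\nu$, as in Theorem \ref{hou}. Finally, I verify convergence. The compositions $\Phi_\nu=X^1_{F_0}\circ\cdots\circ X^1_{F_{\nu-1}}$ converge on $D_{d,0}(r/2,s/2)\times\mathcal O_\varepsilon$ with $\|\Phi-\Phi_0\|\lesssim\sum\varepsilon_\nu^{5/6}$, and the frequencies satisfy $|\omega^*-\omega|\le c\varepsilon^{5/6}$. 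The limit Hamiltonian has $P^{low}_\infty=0$, so $\{I=0,q=0\}$ is invariant and carries the flow $\theta\mapsto\theta+\omega^*t$. Its image under $\Phi$ gives the quasi-periodic solution, and $q(t)\in l^1_{d,0}$ follows from the norm of $\Phi$ in $D_{d,0}$, giving items (1)–(4).
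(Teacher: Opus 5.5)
There is a genuine gap in how you treat the term $\{P^{high}_\nu,F_\nu\}^{low}$.

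\textbf{Why shrinking $s$ does not control it.} Your homological equation $\{N_\nu,F_\nu\}+P^{low}_\nu-[P^{low}_\nu]=0$ leaves this term in $P^{low}_{\nu+1}$, and shrinking $s$ cannot make it small.

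\begin{itemize}
\item Brackets of the degree-$\geq 3$ part with $F^\theta$ and $F^1$ produce terms of weighted degree $1$ and $2$ (counting $I$ twice). Examples are $\langle\partial_IP^{(1)},\partial_\theta F^\theta\rangle$, $\langle\partial_IP^{(2)},\partial_\theta F^\theta\rangle$ and $\mathbf{i}(\langle\partial_{\bar q}P^{(1)},\partial_qF^1\rangle-\langle\partial_qP^{(1)},\partial_{\bar q}F^1\rangle)$.
\item In the weighted norm, a homogeneous term of weighted degree $j$ satisfies $\Vert X\Vert_{D(r,\eta s)}\sim\eta^{j-2}\Vert X\Vert_{D(r,s)}$. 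So the degree-2 part gains nothing, and the degree-1 part gets worse by a factor $\eta^{-1}$.
\item The hypothesis only gives $\Vert X_{P^{high}}\Vert<1$, so there is no small prefactor in Lemma \ref{czlem}. Hence $\Vert X_{\{P^{high},F\}^{low}}\Vert$ is of the order of $\Vert X_F\Vert\gtrsim\varepsilon_\nu\gamma_\nu^{-c}$.
\item Concrete example: $P^{high}\ni I_1^2$ and $P^{low}\ni\varepsilon\cos(\theta_1-\theta_2)$ generate, up to sign, the gauge-invariant, non-averaged term $2\varepsilon I_1\cos(\theta_1-\theta_2)/(\omega_1-\omega_2)$.
\end{itemize}

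Consequently $\Vert X_{P^{low}_1}\Vert\le\varepsilon_0^{4/3}$ already fails at $\nu=0$. Shrinking $s$ only makes the old $P^{high}$ small on the next domain, after the offending bracket has been created. For the same reason, your frequency update $\omega_{\nu+1}=\omega_\nu+\partial_I[P]$ misses the $O(\Vert F\Vert)$ averages of $\{P^{high},F\}$.

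\textbf{How the paper handles it.} The paper uses exactly the Kolmogorov-type equation \eqref{huhomo}: $\{N,F\}+P^{low}+\{P^{high},F\}^{low}=\hat N+\hat P$. This equation is triangular in degree, so it is solved successively.
\begin{itemize}
\item First $F^\theta$, from \eqref{1ho}.
\item Then $F^1$, with right-hand side $P^1+\{P^{high},F^\theta\}^1$.
\item Then $F^I$ and $F^2$, with right-hand sides involving $\{P^{high},F^\theta+F^1\}$.
\end{itemize}
The averages $\{P^{high},F^\theta+F^1\}_0^{I_j}$ and $\{P^{high},F^\theta+F^1\}_0^{q_n\bar q_n}$ go into $\hat N$. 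Gauge invariance kills the zero modes of the $q$, $qq$ and $\bar q\bar q$ components.

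This keeps $\Vert X_{P^{high}_\nu}\Vert\le 2$, leaves $\dot P$ genuinely untouched, and lets $s_\nu\downarrow s/2$. That last point is what your final claim of convergence on $D_{d,0}(r/2,s/2)$ actually needs. With your choice $s_{\nu+1}\sim\varepsilon_\nu^{1/6}s_\nu\to0$, the maps converge only on the torus.

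\textbf{A possible repair along your lines.} If you want a Pöschel-type scheme instead, rescale once, $s\mapsto\varepsilon^{1/3}s$, \emph{before} the first step. This makes both $X_{P^{low}}$ and $X_{P^{high}}$ of size $O(\varepsilon^{1/3})$. The standard quadratic scheme then goes through. Your truncation-by-decay device for the divisors $\langle k,\omega\rangle+\Omega_m-\Omega_n$ is essentially the paper's $K_+$/$\hat P$ mechanism and still works there. The cost is that you give up retaining $\dot P$, and the exponents in (1)--(2) and in the measure bound must be re-tuned.
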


\begin{rem}\

$\bullet$ In this paper, gauge invariance in Assumption E serves to eliminate the presence of certain non-resonance conditions  (see some of the same-sign second Melnikov conditions) during the nonlinear KAM iteration. These non-resonance conditions fail to satisfy the twist property with respect to parameters, thereby obstructing the exclusion of the resonant set. In essence, this difficulty stems from the fact that only finitely many random variables are utilized as parameters; if all random variables could be employed as parameters, gauge invariance would no longer be necessary.

$\bullet$ Let ${\psi_n(x)}_{n \in \mathbb{Z}}$ be the a complete orthogonal basis of the linear operator $\mathcal{L}$(see equation \eqref{s}),
According to the fourth conclusion of Theorem \ref{main theorem}, the equation \ref{eq1.1} admits a solution of the form
 \begin{equation}
 u(t,x)=\sum_{n \in \mathbb{Z}}q_n(t)\psi_n(x)
 \end{equation}
 where
 $$\sup_{t}\sum_{n\in\mathbb{Z}}n^{2d}|q_n(t)|^2 <c\left(\sup_{t}\sum_{n\in\mathbb{Z}}n^{d}|q_n(t)|\right)^2<\infty.$$
 Applying the unitary transformation $\mathrm{G}$ from theorem \ref{yubei}, one can obtain the solution \eqref{solution} present in theorem \ref{thm1.1}.
\end{rem}

\subsection{The proof of Theorem \ref{thm1.1}}\

 Fix $\mathcal{J}=\{n_1,\cdots,n_b\}\subset \mathbb{Z}$ and $\mathbb{Z}_1=\mathbb{Z}\setminus \mathcal{J}$. Furthermore, we choose $n_i$ satisfying
$$|n_i|\le\frac{1}{6}| \ln \epsilon|, \quad i=1,\cdots,b.$$
 Given $\epsilon$ small enough, we introduce action-angular variables $(\theta, I)$ and initial data $y$ to Hamiltonian \eqref{Hamilton3.11} and let
 \begin{equation}\label{acangu}
q_n=\sqrt{I_n+y_n}e^{{\bf i}\theta_n},\quad \bar{q}_n=\sqrt{I_n+y_n}e^{-{\bf i}\theta_n},\quad n\in\mathcal{J}.
\end{equation}
Here $(I,\theta)=(I_{n_1},\cdots,I_{n_b},\theta_{n_1},\cdots,\theta_{n_b})$  are the standard action-angular variables in the $(q_n,\bar{q}_n)_{n\in\mathcal{J}}$-space and $(q,\bar{q})=(q_n,\bar{q}_n)_{n\in\mathbb{Z}_1}.$  Moreover, we take the parameters
$$\xi_{n_i}=v_{n_i},\quad {\rm for}\ 1\le i\le b$$
and $\mathcal{O}=\left[-\frac{1}{10},\frac{1}{10}\right]^b$.
Then the Hamiltonian \eqref{Hamilton3.11} becomes
\begin{equation}\label{huxia}
H(\theta, I, q,\bar{q};\xi)=N(\theta, I, q,\bar{q};\xi)+P(\theta, I, q,\bar{q};\xi)
\end{equation}
with
\begin{align*}
N(\theta, I, q,\bar{q};\xi)=&e(\xi)+\langle \omega(\xi), I\rangle+\sum_{j\in \mathbb{Z}_1} \Omega_j(\xi) |q_j|^2,
\end{align*}
where
$$e(\xi)=\sum_{n\in \mathcal{J}}d_n(\xi) y_n,\quad \langle \omega(\xi),I\rangle=\sum_{n\in \mathcal{J}} d_n(\xi) I_n,\quad  \Omega_j(\xi) =d_j(\xi),$$
\begin{align*}
P(\theta, I, q,\bar{q};\xi)=&P^{low}(\theta, I, q,\bar{q};\xi)+P^{high}(\theta, I, q,\bar{q};\xi),
\end{align*}
where
$$P^{low}(\theta, I, q,\bar{q};\xi)=\epsilon\sum_{\substack{{\rm at}\ {\rm least}\ {\rm two}\\ i,j,n,m\in\mathcal{J}}}\mathcal{G}_{(i,j,n,m)}(\xi)q_iq_j\bar{q}_n\bar{q}_m:=\sum_{\substack{\alpha,\beta\in \mathbb{N}^{\mathbb{Z}_1} \\|\alpha|+|\beta|\le 2}}P^{low}_{\alpha\beta}(\theta,I;\xi)q^\alpha\bar{q}^\beta$$
and
$$P^{high}(\theta, I, q,\bar{q};\xi)=\dot{P}(q,\bar{q};\xi)+\ddot{P}(\theta, I, q,\bar{q};\xi),$$
with
$$\dot{P}(q,\bar{q};\xi)=\epsilon\sum_{i,j,n,m\in\mathbb{Z}_1}\mathcal{G}_{(i,j,n,m)}(\xi)q_iq_j\bar{q}_n\bar{q}_m:=\sum_{\substack{\alpha,\beta\in \mathbb{N}^{\mathbb{Z}_1} \\|\alpha|=|\beta|=2}}\dot{P}_{\alpha\beta}(\xi)q^\alpha\bar{q}^\beta,$$
$$\ddot{P}(\theta, I, q,\bar{q};\xi)=\epsilon\sum_{\substack{{\rm only}\ {\rm one}\ i,j,n,m\in\mathcal{J}}}\mathcal{G}_{(i,j,n,m)}(\xi)q_iq_j\bar{q}_n\bar{q}_m:=\sum_{\substack{\alpha,\beta\in \mathbb{N}^{\mathbb{Z}_1} \\|\alpha|+|\beta|=3}}\ddot{P}_{\alpha\beta}(\theta,I;\xi)q^\alpha\bar{q}^\beta.$$

\begin{lem}\label{ver}
The function $P$ has gauge invariance.
\end{lem}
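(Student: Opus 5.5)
The plan is to verify Assumption E monomial by monomial, using the structure of the Hamiltonian \eqref{Hamilton3.11}. The quartic part is
\[
\epsilon\sum\mathcal{G}_{(m,m_1,m_2,m_3)}\,q_{m_1}\bar{q}_{m_2}q_{m_3}\bar{q}_{m}.
\]
Every monomial in it contains exactly two factors $q$ and two factors $\bar q$. The quadratic part $\sum_m d_m \bar q_m q_m$ is likewise balanced, although it goes into $N$ rather than $P$. So the starting observation is that, before any change of variables, each monomial in $P$ has total $q$-degree equal to total $\bar q$-degree.

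Next we track what the substitution \eqref{acangu} does to a single factor. For $n\in\mathcal{J}$ we write
\[
q_n=\sqrt{I_n+y_n}\,e^{{\bf i}\theta_n},\qquad \bar q_n=\sqrt{I_n+y_n}\,e^{-{\bf i}\theta_n}.
\]
The square root is analytic in $I$ near $0$ since $y_n>0$, and it contributes only powers $I^l$, which play no role in the gauge condition. The angle factor shows that a tangential $q_n$ carries Fourier index $+e_n$ in $k$, while a tangential $\bar q_n$ carries $-e_n$. Normal-site variables $q_j,\bar q_j$ with $j\in\mathbb{Z}_1$ stay as they are and contribute to $\alpha$ and $\beta$ respectively.

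Now take a monomial of the form $q_{i}q_{j}\bar q_{n}\bar q_{m}$ and let $a$ be the number of unbarred factors with index in $\mathcal{J}$, and $a'$ the number of barred factors with index in $\mathcal{J}$. After substitution this monomial becomes $I$-dependent coefficients times $e^{{\bf i}\langle k,\theta\rangle}q^\alpha\bar q^\beta$, where
\[
\sum_i k_i = a-a',\qquad |\alpha| = 2-a,\qquad |\beta| = 2-a'.
\]
Adding these gives
\[
\sum_i k_i+|\alpha|-|\beta| = (a-a')+(2-a)-(2-a') = 0.
\]
This holds for every monomial, so it holds separately for the pieces $P^{low}$, $\dot P$ and $\ddot P$, and hence for their sum $P$.

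The only point needing care is bookkeeping. Expanding $\sqrt{I_n+y_n}$ in powers of $I_n$ and multiplying the resulting series produces many terms $P_{kl\alpha\beta}I^l$, but none of them changes $k$, $\alpha$ or $\beta$. Likewise, the constant and linear-in-$I$ pieces $e(\xi)$ and $\langle\omega,I\rangle$ are removed into $N$ and have $k=0$, $\alpha=\beta=0$. So every coefficient $P_{kl\alpha\beta}$ with $\sum_j k_j+|\alpha|-|\beta|\ne 0$ vanishes identically in $\xi$, which is exactly Assumption E.
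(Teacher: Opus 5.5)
Your proposal is correct and follows the paper's argument: substitute the action--angle variables into the balanced quartic monomials, note that $\sum_i k_i$ equals the number of tangential $q$'s minus the number of tangential $\bar q$'s, and conclude $\sum_i k_i+|\alpha|-|\beta|=0$. The paper instead treats $\dot P$ trivially ($k=0$, $|\alpha|=|\beta|=2$), writes out $P^{low}$, and handles $\ddot P$ ``similarly''; your single count with $a,a'$ covers all three pieces at once, and your exponent $\tfrac12$ on $I_n+y_n$ is the right one (the paper's displayed power $\alpha_n+\beta_n$ is a slip that does not affect the gauge count).
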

\begin{proof}
 For the function $\dot{P}$, since $k=0, |\alpha|=|\beta|=2$, it immediately follows $\sum_{i=1}^bk_i+|\alpha|-|\beta|=0$ holds, otherwise $\dot{P}_{\alpha\beta}(\xi)=0$.

For $P^{low}$, we rewrite
$$\sum_{\substack{{\rm at}\ {\rm least}\ {\rm two}\\ i,j,n,m\in\mathcal{J}}}\mathcal{G}_{(i,j,n,m)}(\xi)q_iq_j\bar{q}_n\bar{q}_m=\sum_{\substack{\alpha_{\mathcal{J}},\beta_{\mathcal{J}},\alpha,\beta\\ |\alpha_{\mathcal{J}}|+|\alpha|=|\beta_{\mathcal{J}}|+|\beta|}}R_{\alpha_{\mathcal{J}}\beta_{\mathcal{J}}\alpha\beta}(\xi)q_{\mathcal{J}}^{\alpha_{\mathcal{J}}}\bar{q}_{\mathcal{J}}^{\beta_{\mathcal{J}}}q^{\alpha}\bar{q}^{\beta},$$
where $\alpha_{\mathcal{J}}=(\alpha_n)_{n\in\mathcal{J}}, \beta_{\mathcal{J}}=(\beta_n)_{n\in\mathcal{J}}$, $q_{\mathcal{J}}=(q_n)_{n\in\mathcal{J}}$ and $\bar{q}_{\mathcal{J}}=(\bar{q}_n)_{n\in\mathcal{J}}$.  Substituting \eqref{acangu} into this function, one gets
\begin{align*}
&P^{low}_{\alpha\beta}(\theta,I;\xi)=\sum_{k}P^{low}_{k\alpha\beta}(I;\xi)e^{{\bf i}\langle k,\theta\rangle}\\
=&\sum_{\alpha_{\mathcal{J}},\beta_{\mathcal{J}}}R_{\alpha_{\mathcal{J}}\beta_{\mathcal{J}}\alpha\beta}(\xi)\prod_{n\in \mathcal{J}}(I_n+y_n)^{\alpha_n+\beta_n}e^{{\bf i}(\alpha_n-\beta_n) \theta_n}.
\end{align*}
It means that $\sum_{i=1}^b k_i=\sum_{n\in \mathcal{J}}(\alpha_n-\beta_n)=|\alpha_{\mathcal{J}}|-|\beta_{\mathcal{J}}|$. Then, we have
$$\sum_{i=1}^b k_i+|\alpha|-|\beta|=|\alpha_{\mathcal{J}}|-|\beta_{\mathcal{J}}|+|\alpha|-|\beta|=0.$$
It implies  gauge invariance for the function $P^{low}$. Similarly, we can show the function $\ddot{P}$ has gauge invariance.
\end{proof}

Then, we will deduce the estimates
\begin{align}\label{pb11}
\Vert P^{low}_{\alpha\beta}\Vert_{D,\mathcal{O}},\quad \Vert \ddot{P}_{\alpha\beta}\Vert_{D,\mathcal{O}}\le
\epsilon^{\frac{1}{2}}e^{-\frac{1}{8}n^*_{\alpha\beta}},
\end{align}
and
\begin{align}\label{pa11}
\Vert \dot{P}_{\alpha\beta}\Vert_{D,\mathcal{O}}\le
\epsilon^{\frac{1}{2}} e^{-\frac{1}{8}(n_{\alpha\beta}^+-n_{\alpha\beta}^-)}.
\end{align}
Actually, the bounds \eqref{pb11},\eqref{pa11} can be easily obtained from the decay property \eqref{calg}. If $i,j,n,m\in\mathbb{Z}_1$, then
$$\epsilon |\mathcal{G}_{(i,j,n,m)}(\xi)|_{\mathcal{O}}\le 24\epsilon e^{-\frac{1}{8}(\max\{i,j,n,m\}-\min\{i,j,n,m\})}.$$
It concludes the estimate for $\dot{P}_{\alpha\beta}$ with $|\alpha|=|\beta|=2$.
If there is at least one $i,j,n,m\in\mathcal{J}$, then
\begin{align*}
\epsilon |\mathcal{G}_{(i,j,n,m)}(\xi)|_{\mathcal{O}}\le& 24\epsilon e^{-\frac{1}{8}(\max\{i,j,n,m\}-\min\{i,j,n,m\})}\\
\le& 24\epsilon e^{-\frac{1}{8}\max\{i,j,n,m\}}e^{\frac{1}{48}|\ln\epsilon|}\\
\le & \epsilon^{\frac{2}{3}} e^{-\frac{1}{8}\max\{i,j,n,m\}}.
\end{align*}
It implies the estimate for $(P^{low}+\ddot{P})_{\alpha\beta}$.

We need to verify the Hamiltonian $H=N+P$ satisfies the assumptions {\bf Assumption A-E} of the KAM theorem, in which  {\bf Assumption C} is obviously satisfied. {\bf  Assumption E} has been verified by Lemma \ref{ver}. The assumption {\bf Assumption D} can be obtained from  \eqref{pb11} and \eqref{pa11}  with $\rho=\frac{1}{8}$.

Since $\omega_n(\xi)=d_n(\xi) (n\in\mathcal{J})$ and $\Omega_j(\xi)=d_j(\xi) (j\in \mathbb{Z}_1)$, then by Theorem \ref{yubei}, the conditions {\bf Assumption A} and {\bf Assumption B} are valid, by takeing $\delta \leq c(\frac{1}{b})$ for some small constant $c$. From estimates \eqref{pb11} and \eqref{pa11}, one can immediately obtains
$$\Vert X_P\Vert_{D_{d,\rho}(r,s),\mathcal{O}}\le \epsilon^{\frac{1}{3}}:=\varepsilon.$$

\section{Proof of Theorem \ref{main theorem}}

Set
$$\varepsilon_0=\varepsilon,\quad s_0=s,\quad r_0=r,$$
$$\rho_0=\rho,\quad  K_{0}=2|\ln \varepsilon_{0}|.$$
For $\nu=1,2,\cdots$, define the following sequences
\begin{align}
&\varepsilon_\nu=c\varepsilon_{\nu-1}^{\frac{5}{4}},
& \text{the size of perturbation}\\ &K_{\nu}=2|\ln \varepsilon_{\nu-1}|K_{\nu-1}, &\text{the length of truncation of lattice base}\\& \rho_\nu=K_\nu^{-1}, &\text{the exponential weighting of Hilbert space }\label{haoyong}\\
&s_{\nu}=s_0\left(1-\sum_{i=2}^{\nu+1}2^{-i}\right), &\text{the width of angle variable}\\& r_\nu=r_0\left(1-\sum_{i=2}^{\nu+1}2^{-i}\right),&\text{the width of action variable}\\
& \gamma_{\nu}=\varepsilon_\nu^{\frac{1}{16}},& \text{the measure of removed parameter}\\
& D_\nu=D_{\rho_\nu}(r_\nu,s_\nu).\label{haode}& \text{the domain of Hamilton function}
\end{align}

\begin{lem}[Iterative lemma]\label{itlem}
There exists $\varepsilon_0$ sufficiently small such that the following holds:

$\mathbf{(I1)_\nu}:$ For any $\nu\geq 0$, there exist Hamiltonians
$$H_{\nu}(\theta, I, q,\bar{q};\xi)=N_\nu(I, q,\bar{q};\xi)+P_\nu(\theta, I, q,\bar{q};\xi),$$
with
$$N_\nu(I, q,\bar{q};\xi)=e_\nu(\xi)+\langle \omega^\nu(\xi), I\rangle+\sum_{n\in\mathbb{Z}_1}\Omega^\nu_n(\xi) q_n\bar{q}_n,$$
$$P_\nu=P_\nu^{low}+P_\nu^{high},$$
with
$$P_\nu^{low}=\sum_{\substack{k\in \mathbb{Z}^b,l\in \mathbb{N}^b,\alpha,\beta\in \mathbb{N}^{\mathbb{Z}_1}\\2|l|+|\alpha+\beta|\le 2}}P^\nu_{kl\alpha\beta}I^l e^{{\bf i}\langle k,\theta\rangle }q^\alpha \bar{q}^\beta:=\sum_{\substack{\alpha,\beta\in \mathbb{N}^{\mathbb{Z}_1}\\|\alpha+\beta|\le 2}}P^{low}_{\nu,\alpha\beta}(\theta,I;\xi)q^\alpha \bar{q}^\beta,$$
and
$$P_\nu^{high}=\sum_{\substack{k\in \mathbb{Z}^b,l\in \mathbb{N}^b,\alpha,\beta\in \mathbb{N}^{\mathbb{Z}_1}\\2|l|+|\alpha+\beta|\geq 3}}P^\nu_{kl\alpha\beta}I^l e^{{\bf i}\langle k,\theta\rangle }q^\alpha \bar{q}^\beta.$$
In addition, $P_\nu^{high}$ has the decomposition
$$P_\nu^{high}=\dot{P}+\ddot{P}_\nu,$$
with
$$\dot{P}(q,\bar{q};\xi)=\sum_{\substack{\alpha,\beta\in \mathbb{N}^{\mathbb{Z}_1} \\|\alpha|=|\beta|=2}}\dot{P}_{\alpha\beta}(\xi)q^\alpha\bar{q}^\beta,$$
$$\ddot{P}_\nu(\theta, I, q,\bar{q};\xi)=\sum_{\alpha,\beta\in \mathbb{N}^{\mathbb{Z}_1}}\ddot{P}^\nu_{\alpha\beta}(\theta, I;\xi)q^\alpha\bar{q}^\beta.$$

The Hamiltonian $H_\nu$ has the properties:

(1) $\omega^\nu$ and $\Omega^\nu$ satisfies
\begin{align}
&\omega_n^0=n+\omega^*_n(\xi),\quad |\omega_n^{\nu+1}-\omega_n^\nu|_{\mathcal{O}_{\nu+1}}\le \varepsilon_\nu^{\frac{5}{6}},  &(n\in \mathcal{J}),\\
&\Omega_n^0=n+\Omega^*_n(\xi), \quad |\Omega_n^{\nu+1}-\Omega^\nu_n|_{\mathcal{O}_{\nu+1}}\le \varepsilon_\nu^{\frac{5}{6}} e^{-\rho_{\nu}|n|},&(n\in \mathbb{Z}_1). \label{5.9}
\end{align}

(2) The Hamiltonian $H_{\nu}$ is real analytic on $D_\nu$ and $C^1$ parameterized by $\xi\in \mathcal{O}_\nu$, where
$$\mathcal{O}_0=\mathcal{O},$$
and for $\nu\geq 1$
$$\mathcal{O}_\nu=\left\{
\xi\in \mathcal{O}_{\nu-1}:
\begin{array}{l}
|\langle k,\omega_{\nu-1}\rangle|>\frac{\gamma_{\nu-1}}{|k|},\\
|\langle k,\omega_{\nu-1}\rangle+\Omega^{\nu-1}_n|>\frac{\gamma_{\nu-1}}{|k|^\tau K_\nu},\\
|\langle k,\omega_{\nu-1}\rangle+\Omega^{\nu-1}_n\pm \Omega^{\nu-1}_m|>\frac{\gamma_{\nu-1}}{|k|^\tau K_\nu^{2}},
\end{array} k\neq 0,|m|,|n|\le K_\nu
\right\}.$$

(3) The perturbation $P_\nu$ has gauge invariance and satisfies the estimate
$$\Vert X_{P^{low}_\nu}\Vert_{D_\nu,\mathcal{O}_{\nu}}\le  \varepsilon_\nu,\quad \Vert X_{P^{high}_\nu}\Vert_{D_\nu,\mathcal{O}_{\nu}}\le 1+\sum_{i=1}^\nu c\varepsilon^{\frac{5}{6}}_i\le 2$$
and for the coefficients of the perturbation $P_\nu$, the following holds:
\begin{align}\label{pb}
\Vert P^{low}_{\nu,\alpha\beta}\Vert_{D_\nu,\mathcal{O}_\nu}\le
\varepsilon_\nu e^{-\rho_\nu n^*_{\alpha\beta}} ,\quad \Vert \dot{P}_{\alpha\beta}\Vert_{D_\nu,\mathcal{O}_\nu}\le
e^{-\rho_\nu(n_{\alpha\beta}^+-n_{\alpha\beta}^-)},\quad
\end{align}
\begin{align}\label{pa}
 \Vert \ddot{P}^\nu_{\alpha\beta}\Vert_{D_\nu,\mathcal{O}_\nu}\le (1+\sum_{i=1}^\nu c\varepsilon_i^{\frac{5}{6}})
e^{-\rho_\nu n^*_{\alpha\beta}}\le 2e^{-\rho_\nu n^*_{\alpha\beta}}.
\end{align}

$\mathbf{(I2)_\nu}:$ For any $\nu\geq 0$, there exists symplectic transformation $\Phi_{\nu+1}: D_{\nu+1}\times \mathcal{O}_{\nu+1}\rightarrow D_{\nu}\times \mathcal{O}_{\nu}$ such that
$$H_{\nu}\circ\Phi_{\nu+1}=H_{\nu+1}$$
and $\Phi_\nu$ satisfies the estimate
$$\Vert \Phi_{\nu+1}-id\Vert_{D_{\nu+1},\mathcal{O}_{\nu+1}}\le c\varepsilon_\nu^{\frac{5}{6}},\quad \Vert D\Phi_{\nu+1}-Id\Vert_{D_{\nu+1},\mathcal{O}_{\nu+1}}\le c\varepsilon_\nu^{\frac{4}{5}}.$$
\end{lem}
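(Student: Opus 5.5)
We argue by induction on $\nu$. The case $\nu=0$ is the setting of Theorem \ref{main theorem}: Assumptions A--E together with $\Vert X_{P^{low}}\Vert_{D,\mathcal{O}}=\varepsilon_0$ and $\Vert X_{P^{high}}\Vert<1$ give $(\mathbf{I1})_0$ directly. We then assume $(\mathbf{I1})_\nu$ and construct $\Phi_{\nu+1}$ as the time-one map $X^1_{F_\nu}$ of a Hamiltonian $F_\nu$. The generator $F_\nu$ is chosen to have the same low-order structure as $P^{low}_\nu$, namely
\[
F_\nu=\sum_{2|l|+|\alpha+\beta|\le 2}F_{kl\alpha\beta}I^le^{{\bf i}\langle k,\theta\rangle}q^\alpha\bar q^\beta .
\]
It solves the truncated homological equation
\[
\{N_\nu,F_\nu\}+\Gamma_{K_{\nu+1}}P^{low}_\nu-[P^{low}_\nu]=0 .
\]
Here $\Gamma_K$ truncates to $|k|\le K$ and $|n|,|m|\le K$. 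The term $[P^{low}_\nu]$ collects the average in $\theta$ of the $I$-linear part and of the $q_n\bar q_n$ part. These averages update $e_\nu$, $\omega^\nu$ and $\Omega^\nu_n$.

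The homological equation is solved coefficient by coefficient. The divisors are $\langle k,\omega^\nu\rangle$, $\langle k,\omega^\nu\rangle\pm\Omega^\nu_n$, and $\langle k,\omega^\nu\rangle+\Omega^\nu_n-\Omega^\nu_m$, the last coming from $q_n\bar q_m$. Gauge invariance (Assumption E, preserved by brackets) kills the $q_nq_m$ and $\bar q_n\bar q_m$ terms. It also forces $\sum k_j=0$ when $n=m$, so no same-sign divisor ever appears.

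On $\mathcal{O}_{\nu+1}$ the divisors are bounded below by $\gamma_\nu/(|k|^\tau K^2_{\nu+1})$. Derivatives in $\xi$ are controlled using Assumptions A and B. This yields
\[
\Vert X_{F_\nu}\Vert\le c\,\gamma_\nu^{-2}K_{\nu+1}^{c}\,\varepsilon_\nu\le \varepsilon_\nu^{5/6}.
\]
This bound uses $\gamma_\nu=\varepsilon_\nu^{1/16}$ and $K_{\nu+1}\sim|\ln\varepsilon_\nu|$ powers, after losing a Cauchy strip in $r$ and $s$. The truncation tails are controlled by the coefficient decay in \eqref{pb}. For $|k|>K$ we use the factor $e^{-K\sigma}$. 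For $n^*_{\alpha\beta}>K_{\nu+1}$ we use $e^{-\rho_\nu K_{\nu+1}}=e^{-2|\ln\varepsilon_\nu|}=\varepsilon_\nu^2$. Together they give $\Vert X_{(1-\Gamma)P^{low}_\nu}\Vert\le\varepsilon_\nu^{2}$.

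The coefficient decay of $F_\nu$ is inherited from that of $P^{low}_\nu$, since the divisors do not depend on spatial location beyond truncation. The bound \eqref{5.9} follows because $[P^{low}_\nu]_{nn}\le\varepsilon_\nu e^{-\rho_\nu|n|}$.

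Next we write
\[
H_{\nu+1}=N_{\nu+1}+P_{\nu+1},
\]
with
\[
P_{\nu+1}=(1-\Gamma)P^{low}_\nu+\int_0^1\{(1-t)\{N_\nu,F\}+P^{low}_\nu,F\}\circ X^t_F\,dt+P^{high}_\nu\circ X^1_F .
\]
All brackets involving only $P^{low}$ and $F$ are $O(\varepsilon_\nu^{2-})$ by Lemma \ref{czlem}, hence below $\varepsilon_{\nu+1}=c\varepsilon_\nu^{5/4}$.

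The key step, and the main obstacle, is $P^{high}_\nu\circ X^1_F-P^{high}_\nu$. Its first term $\{P^{high}_\nu,F\}$ is only $O(\varepsilon_\nu^{5/6})$ and contains new terms of degree $\le 2$: a bracket of a degree-4 term with a degree-2 term produces degree 2. These would break the superquadratic scheme. We plan to resolve this by exploiting the structure of $F$. Brackets of degree-$(\ge3)$ terms with $F$ produce low-order parts only through contractions that leave $2|l|+|\alpha+\beta|\le2$. Such a part arises only when $F$ has constant or linear terms in $(I,q)$, and those terms are themselves small, of size $O(\varepsilon_\nu^{5/6})\cdot O(\varepsilon_\nu)$. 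The price is a careful Kolmogorov-type bookkeeping that separates $F$ by degree; we expect the product structure to give $O(\varepsilon_\nu^{5/3})\le\varepsilon_{\nu+1}$.

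The remaining high-degree part is absorbed into $\ddot P_{\nu+1}$ with increment $c\varepsilon_\nu^{5/6}$. This yields \eqref{pa} and $\Vert X_{P^{high}_{\nu+1}}\Vert\le 1+\sum c\varepsilon_i^{5/6}$. The term $\dot P$ is unchanged at leading order, since the correction enters $\ddot P$, and its decay in $n^+-n^-$ survives as $\rho_{\nu+1}<\rho_\nu$.

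Finally, decay of the new coefficients under brackets is checked via the Banach-algebra and Cauchy estimates (Lemma \ref{Cauchy}), shrinking $\rho_\nu$ to $\rho_{\nu+1}$. The estimates on $\Phi_{\nu+1}-id$ and $D\Phi_{\nu+1}-Id$ follow from $\Vert X_F\Vert\le\varepsilon_\nu^{5/6}$ and Cauchy estimates.
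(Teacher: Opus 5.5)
There is a genuine gap at exactly the step you single out as the main obstacle. You solve the unmodified equation $\{N_\nu,F_\nu\}+\Gamma_{K_{\nu+1}}P^{low}_\nu-[P^{low}_\nu]=0$. You then assert that the part of $\{P^{high}_\nu,F_\nu\}$ with $2|l|+|\alpha+\beta|\le 2$ is $O(\varepsilon_\nu^{5/3})$, and that is false.

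For monomials, the weighted degree of a bracket is $w(A)+w(B)-2$. So, contrary to your example, a degree-$4$ term bracketed with a degree-$2$ term has degree $4$. Hence the low part of $\{P^{high}_\nu,F_\nu\}$ comes only from $F^\theta_\nu$ and $F^1_\nu$. Examples are $\langle\partial_I P^{(1)},\partial_\theta F^\theta\rangle$, or the $q$-bracket of the cubic part of $P^{high}_\nu$ with $F^1_\nu$.

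These components of $F_\nu$ are genuinely first order, up to $\gamma_\nu^{-c}K_{\nu+1}^{c}$ losses. They solve homological equations whose right-hand sides $P^\theta_\nu$, $P^1_\nu$ are controlled by $(\mathbf{I1})_\nu$ only through $\varepsilon_\nu$. Meanwhile $P^{high}_\nu$ is not small at all: the lemma only gives $\Vert X_{P^{high}_\nu}\Vert\le 2$. So $\{P^{high}_\nu,F^\theta_\nu+F^1_\nu\}^{low}$ is essentially of first order in $\varepsilon_\nu$, far above $\varepsilon_{\nu+1}=c\varepsilon_\nu^{5/4}$. Putting it into $P^{low}_{\nu+1}$ breaks $(\mathbf{I1})_{\nu+1}$, already for $\nu=0$ in general. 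Your ``product'' heuristic would need $P^{high}_\nu=O(\varepsilon_\nu)$, which is exactly what is not available.

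The missing idea is the Kolmogorov-type modification the paper uses: put this term into the homological equation itself, i.e.\ solve \eqref{huhomo}, $\{N,F\}+P^{low}+\{P^{high},F\}^{low}=\hat N+\hat P$. Your degree count is precisely what makes this solvable, because the system is triangular. $\{P^{high},F\}^{low}$ has no $\theta$-only part, its degree-one part involves only $F^\theta$, and its degree-two parts involve only $F^\theta$ and $F^1$.

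Hence \eqref{1ho}--\eqref{7ho} are solved in order:
\begin{itemize}
\item first $F^\theta$;
\item then $F^1$, with right-hand side $P^1+\{P^{high},F^\theta\}^1$;
\item then $F^I,F^2$, with right-hand sides $P^I,P^2$ plus the corresponding parts of $\{P^{high},F^\theta+F^1\}$.
\end{itemize}
The averages $\{P^{high},F^\theta+F^1\}^{I_j}_0$ and $\{P^{high},F^\theta+F^1\}^{q_n\bar q_n}_0$ enter $\hat\omega_j$ and $\hat\Omega_n$. Gauge invariance of these brackets kills the $k=0$ modes in the $q_n$, $\bar q_n$, $q_mq_n$, $\bar q_m\bar q_n$ equations. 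The price is the cascade of small-divisor losses in \eqref{1est}--\eqref{4est}. After this, only $\{P^{high},F\}^{high}$, of size $c\varepsilon_\nu^{5/6}$, goes into $\ddot P_{\nu+1}$; this is the origin of $1+\sum_i c\varepsilon_i^{5/6}$. Meanwhile $P^{low}_{\nu+1}$ contains only $\hat P$ and genuinely quadratic terms.

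A secondary error: gauge invariance does not kill the $q_mq_n$, $\bar q_m\bar q_n$ terms. It only forces $\sum_jk_j=\mp2$, i.e.\ it removes their $k=0$ modes. In the application, $q_iq_j\bar q_n\bar q_m$ with $n,m\in\mathcal{J}$ gives $q_iq_je^{-\mathbf{i}(\theta_n+\theta_m)}$. So the same-sign divisors $\langle k,\omega\rangle\pm(\Omega_m+\Omega_n)$ with $k\neq0$ do occur, and must be handled with the $\pm$ condition in $\mathcal{O}_{\nu+1}$. Otherwise those $O(\varepsilon_\nu)$ terms also stay in $P^{low}_{\nu+1}$.
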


In the following, we will describe one step of KAM iteration in more details. By defining $N_0=N$ and $P_0=P$, the corresponding properties for $H_0=N_0+P_0$ in Lemma \ref{itlem} holds naturally. Now, we suppose the iteration holds at the  $\nu$-th step and will show the iteration holds at  the $(\nu+1)$-th step.

To simplify the notations, we use the notations without subscripts (or superscripts) ``$\nu$" to represent the corresponding quantities at the $\nu$-th step;  the notations with subscripts (or superscripts) ``$+$" to represent the corresponding quantities at the $(\nu+1)$-th step. 

Thus, we consider the Hamiltonian
$$H(\theta, I, q,\bar{q};\xi)=N(I, q,\bar{q};\xi)+P(\theta, I, q,\bar{q};\xi),$$
with
\begin{align*}
N(I, q,\bar{q};\xi)=&e(\xi)+\langle \omega(\xi), I\rangle+\sum_{n\in\mathbb{Z}_1}\Omega_n(\xi) q_n\bar{q}_n,\\
P(\theta, I, q,\bar{q};\xi)=&P^{low}(\theta, I, q,\bar{q};\xi)+P^{high}(\theta, I, q,\bar{q};\xi).
\end{align*}

\subsection{The derivation of homological equation}\label{sub5.1}
By the definition of $P^{low}$ and $P^{high}$, we have
$$P^{low}=\sum_{\substack{k\in \mathbb{Z}^b,l\in \mathbb{N}^b,\alpha,\beta\in \mathbb{N}^{\mathbb{Z}_1}\\2|l|+|\alpha+\beta|\le 2}}P_{kl\alpha\beta}I^l e^{{\bf i}\langle k,\theta\rangle }q^\alpha \bar{q}^\beta,$$
and
$$P^{high}=\sum_{\substack{k\in \mathbb{Z}^b,l\in \mathbb{N}^b,\alpha,\beta\in \mathbb{N}^{\mathbb{Z}_1}\\2|l|+|\alpha+\beta|\geq 3}}P_{kl\alpha\beta}I^l e^{{\bf i}\langle k,\theta\rangle }q^\alpha \bar{q}^\beta.$$
We will do a symplectic coordinate change $\Phi$ which can be produced by the time-$1$ map $X_F^t|_{t=1}$ of the Hamiltonian vector field, where $F$ is of the form
$$F=\sum_{\substack{k\in \mathbb{Z}^b,l\in \mathbb{N}^b,\alpha,\beta\in \mathbb{N}^{\mathbb{Z}_1}\\2|l|+|\alpha|+|\beta|\le 2}}F_{kl\alpha\beta}I^lq^\alpha\bar{q}^\beta e^{{\bf i}\langle k,\theta\rangle}.$$
Under the transformation $\Phi=X_F^t|_{t=1}$, by Taylor's formula, we have
\begin{align*}
H_+=&H\circ \Phi=H\circ X_F^1\\
=&H+\{H,F\}+\int_0^1(1-t)\{\{H,F\},F\}\circ X_F^tdt\\
=& N+\{N,F\}+\int_0^1(1-t)\{\{N,F\},F\}\circ X_F^t dt\\
&\quad +P^{low}+\int_0^1 \{P^{low},F\}\circ X_F^tdt\\
&\quad +P^{high}+\{P^{high},F\}+\int_0^1 (1-t)\{\{P^{high},F\},F\}\circ X_F^tdt.
\end{align*}
The following is the homological equation
\begin{equation}\label{huhomo}
\{N,F\}+P^{low}+\{P^{high},F\}^{low}=\hat{N}+\hat{P},
\end{equation}
where $\hat{N}$ and $\hat{P}$ will be determined later.
If the above equation is solved, then the new normal form $N_+$ and new perturbation $P_+$ can be written as
\begin{align*}
N_+=&N+\hat{N},\\
P_+=&\hat{P}+P^{high}+\{P^{high},F\}^{high}\\
&\quad +\int_0^1 (1-t)\{\{N+P^{high},F\},F\}\circ X_F^tdt\\
&\quad +\int_0^1 \{P^{low},F\}\circ X_F^tdt.
\end{align*}

\subsection{The solvability of homological equation \eqref{huhomo}}\label{gugu}
Write $P^{low}$ as follows
$$P^{low}=P^\theta+P^I+P^1+P^2,$$
where
\begin{align}
P^\theta=&P^\theta(\theta;\xi)=\sum_{\substack{k\in \mathbb{Z}^b,l\in \mathbb{N}^b,\alpha,\beta\in \mathbb{N}^{\mathbb{Z}_1}\\2|l|+|\alpha+\beta|=0}}P_{kl\alpha\beta}I^l e^{{\bf i}\langle k,\theta\rangle }q^\alpha \bar{q}^\beta,\label{1pp}\\
P^I=&\langle P^I(\theta;\xi),I\rangle=\sum_{\substack{k\in \mathbb{Z}^b,l\in \mathbb{N}^b,\alpha,\beta\in \mathbb{N}^{\mathbb{Z}_1}\\|l|=1,|\alpha+\beta|=0}}P_{kl\alpha\beta}I^l e^{{\bf i}\langle k,\theta\rangle }q^\alpha \bar{q}^\beta,\nonumber\\
P^1=&\langle P^q(\theta;\xi),q\rangle+\langle P^{\bar{q}}(\theta;\xi),\bar{q}\rangle=\sum_{\substack{k\in \mathbb{Z}^b,l\in \mathbb{N}^b,\alpha,\beta\in \mathbb{N}^{\mathbb{Z}_1}\\|l|=0,|\alpha+\beta|=1}}P_{kl\alpha\beta}I^l e^{{\bf i}\langle k,\theta\rangle }q^\alpha \bar{q}^\beta,\nonumber\\
P^2=&\langle P^{qq}(\theta;\xi)q,q\rangle+\langle P^{q\bar{q}}(\theta;\xi)q,\bar{q}\rangle+\langle P^{\bar{q}\bar{q}}(\theta;\xi)\bar{q},\bar{q}\rangle\nonumber\\
=&\sum_{\substack{k\in \mathbb{Z}^b,l\in \mathbb{N}^b,\alpha,\beta\in \mathbb{N}^{\mathbb{Z}_1}\\|l|=0,|\alpha+\beta|=2}}P_{kl\alpha\beta}I^l e^{{\bf i}\langle k,\theta\rangle }q^\alpha \bar{q}^\beta,\label{2pp}
\end{align}
with $P^I=(P^{I_j})_{j\in \mathcal{J}}$, $P^q=(P^{q_n})_{n\in\mathbb{Z}_1}$ and   $P^{\bar{q}}=(P^{\bar{q}_n})_{n\in\mathbb{Z}_1}$ being vectors and $P^{qq}=(P^{q_mq_n})_{m,n\in\mathbb{Z}_1}$, $P^{q\bar{q}}=(P^{q_m\bar{q}_n})_{m,n\in\mathbb{Z}_1}$ and $P^{\bar{q}\bar{q}}=(P^{\bar{q}_m\bar{q}_n})_{m,n\in\mathbb{Z}_1}$ being matrices.

In addition, write $F=F^\theta+F^I+F^1+F^2$ as the same form as $P^{low}$ and $P^{high}=\sum_{j=0}^4 P^{(j)}$, where
\begin{align*}
P^{(0)}=&\sum_{\substack{k\in \mathbb{Z}^b,l\in \mathbb{N}^b,\alpha,\beta\in \mathbb{N}^{\mathbb{Z}_1}\\|l|=2, |\alpha+\beta|=0}}P_{kl\alpha\beta}I^l e^{{\bf i}\langle k,\theta\rangle }q^\alpha \bar{q}^\beta,\\
P^{(1)}=&\sum_{\substack{k\in \mathbb{Z}^b,l\in \mathbb{N}^b,\alpha,\beta\in \mathbb{N}^{\mathbb{Z}_1}\\|l|=1, |\alpha+\beta|=1}}P_{kl\alpha\beta}I^l e^{{\bf i}\langle k,\theta\rangle }q^\alpha \bar{q}^\beta,\\
P^{(2)}=&\sum_{\substack{k\in \mathbb{Z}^b,l\in \mathbb{N}^b,\alpha,\beta\in \mathbb{N}^{\mathbb{Z}_1}\\|l|=1, |\alpha+\beta|=2}}P_{kl\alpha\beta}I^l e^{{\bf i}\langle k,\theta\rangle }q^\alpha \bar{q}^\beta,\\
P^{(3)}=&\sum_{\substack{k\in \mathbb{Z}^b,l\in \mathbb{N}^b,\alpha,\beta\in \mathbb{N}^{\mathbb{Z}_1}\\|l|=0, |\alpha+\beta|=3}}P_{kl\alpha\beta}I^l e^{{\bf i}\langle k,\theta\rangle }q^\alpha \bar{q}^\beta,\\
P^{(4)}=&\sum_{\substack{k\in \mathbb{Z}^b,l\in \mathbb{N}^b,\alpha,\beta\in \mathbb{N}^{\mathbb{Z}_1}\\2|l|+|\alpha+\beta|\geq 5\ {\rm or}\ |\alpha+\beta|\geq 4}}P_{kl\alpha\beta}I^l e^{{\bf i}\langle k,\theta\rangle }q^\alpha \bar{q}^\beta.
\end{align*}
By simple computation, we obtain
$$\{P^{high}, F\}^{low}=\{P^{high}, F\}^I+\{P^{high}, F\}^1+\{P^{high}, F\}^2,$$
where
\begin{align*}
\{P^{high}, F\}^I=&\langle \partial_{I}P^{(0)}, \partial_{\theta} F^\theta\rangle+{\bf i}(\langle \partial_{\bar{q}}P^{(1)}, \partial_{q} F^1\rangle-\langle \partial_{q}P^{(1)}, \partial_{\bar{q}} F^1\rangle),\\
\{P^{high}, F\}^1=&\langle \partial_{I}P^{(1)}, \partial_{\theta} F^\theta\rangle,\\
\{P^{high}, F\}^2=&\langle \partial_{I}P^{(1)}, \partial_{\theta} F^1\rangle+ \langle \partial_{I}P^{(2)}, \partial_{\theta} F^\theta\rangle\\
&+{\bf i}(\langle \partial_{\bar{q}}P^{(1)}, \partial_{q} F^1\rangle-\langle \partial_{q}P^{(1)}, \partial_{\bar{q}} F^1\rangle).
\end{align*}

Set
\begin{equation}\label{ohat}
\hat{N}^\theta=P_0^\theta,\quad \hat{N}^{I_j}=P^{I_j}_0+\{P^{high}, F^{\theta}+F^1\}^{I_j}_0,
\end{equation}
\begin{equation}\label{Ohat}
\hat{N}^{q_m\bar{q}_n}=P^{q_m\bar{q}_n}_0+\{P^{high}, F^{\theta}+F^1\}^{q_m\bar{q}_n}_0,
\end{equation}
where  for a function $W(\theta)$, $W_0$ denotes its $0$-th Fourier coefficient. In addition, we define
\begin{align}\label{nN}
\hat{N}=&\hat{N}^\theta+\sum_{j\in\mathcal{J}}\hat{N}^{I_j} I_j+\sum_{n\in\mathbb{Z}_1}\hat{N}^{q_n\bar{q}_n}q_n\bar{q}_n\nonumber\\
:=&\hat{e}+\sum_{j\in\mathcal{J}}\hat{\omega}_j I_j+\sum_{n\in\mathbb{Z}_1}\hat{\Omega}_nq_n\bar{q}_n,
\end{align}
and
\begin{align}\label{oOme}
\hat{P}=&\sum_{|n|>K_+}[(P^{q_n}+\{P^{high}, F^{\theta}\}^{q_n})q_n+(P^{\bar{q}_n}+\{P^{high}, F^{\theta}\}^{\bar{q}_n})\bar{q}_n]\nonumber\\
&+\sum_{|m| {\rm or} |n|>K_+}[(P^{q_mq_n}+\{P^{high}, F^{\theta}+F^1\}^{q_mq_n})q_mq_n\nonumber\\
&\quad\quad\quad\quad\quad\quad+(P^{\bar{q}_m\bar{q}_n}+\{P^{high}, F^{\theta}+F^1\}^{\bar{q}_m\bar{q}_n})\bar{q}_m\bar{q}_n\nonumber\\
&\quad\quad\quad\quad\quad\quad+(P^{q_m\bar{q}_n}+\{P^{high}, F^{\theta}+F^1\}^{q_m\bar{q}_n})q_m\bar{q}_n].
\end{align}

Denote $\partial_\omega=\omega\cdot \partial_\theta$. Then homological equation \eqref{huhomo} becomes
\begin{align}
&\partial_\omega F^\theta+\hat{N}^\theta=P^\theta,\label{1ho}\\
&(\partial_\omega +{\bf i} \Omega_n)F^{q_n}=P^{q_n}+\{P^{high}, F^{\theta}\}^{q_n},\quad |n|\le K_+,\label{2ho}\\
&(\partial_\omega -{\bf i} \Omega_n)F^{\bar{q}_n}=P^{\bar{q}_n}+\{P^{high}, F^{\theta}\}^{\bar{q}_n},\quad |n|\le K_+,\label{3ho}\\
&\partial_\omega F^{I_j}+\hat{N}^{I_j}=P^{I_j}+\{P^{high}, F^{\theta}+F^1\}^{I_j}, \quad j\in\mathcal{J},\label{4ho}\\
&(\partial_\omega +{\bf i} \Omega_m+{\bf i} \Omega_n)F^{q_mq_n}=P^{q_mq_n}+\{P^{high}, F^{\theta}+F^1\}^{q_mq_n},\quad |m|,|n|\le K_+,\label{5ho}\\
&(\partial_\omega -{\bf i} \Omega_m-{\bf i} \Omega_n)F^{\bar{q}_m\bar{q}_n}=P^{\bar{q}_m\bar{q}_n}+\{P^{high}, F^{\theta}+F^1\}^{\bar{q}_m\bar{q}_n},\quad |m|,|n|\le K_+,\label{6ho}\\
&(\partial_\omega +{\bf i} \Omega_m-{\bf i} \Omega_n)F^{q_m\bar{q}_n}+\delta_{mn}\hat{N}^{q_m\bar{q}_n}\nonumber\\
&\quad\quad\quad\quad\quad\quad\quad\quad\quad\quad\ =P^{q_m\bar{q}_n}+\{P^{high}, F^{\theta}+F^1\}^{q_m\bar{q}_n},\quad |m|,|n|\le K_+.\label{7ho}
\end{align}

\subsection{The solution of homological equation}
 In this subsection, we will focus on the solution of homological equation. Define the domain
$$D_i=D_{d, \rho}\left(r_++\frac{i}{5}(r-r_+),s_++ \frac{i}{5}(s-s_+)\right),\quad 1\le i\le 5.$$
It immediately follows that
$$D_1\subset D_2\subset D_3\subset D_4\subset D_5.$$
Then, we have the following lemma.
\begin{lem}\label{holemma}
There exists a real analytic Hamiltonian
\begin{align*}
F=&\sum_{\substack{k\in \mathbb{Z}^b,l\in \mathbb{N}^b,\alpha,\beta\in \mathbb{N}^{\mathbb{Z}_1}\\2|l|+|\alpha|+|\beta|\le 2}}F_{kl\alpha\beta}I^lq^\alpha\bar{q}^\beta e^{{\bf i}\langle k,\theta\rangle}\\
=&F^\theta+F^I+F^1+F^2,
\end{align*}
where $F^\theta,F^I,F^1,F^2$ is defined similar as $P^\theta,P^I,P^1,P^2$ in \eqref{1pp}-\eqref{2pp}. In addition, $F$ is $C^1$ parametrized by $\xi\in \mathcal{O}_+$ with
$$\mathcal{O}_+=\left\{
\xi\in \mathcal{O}:
\begin{array}{l}
|\langle k,\omega\rangle|>\frac{\gamma}{|k|^\tau},\\
|\langle k,\omega\rangle+\Omega_n|>\frac{\gamma}{|k|^\tau K_+},\\
|\langle k,\omega\rangle+\Omega_n\pm \Omega_m|>\frac{\gamma}{|k|^\tau K_+^{2}},
\end{array} \quad k\neq 0,|m|,|n|\le K_+
\right\}.$$
 such that $F$ satisfies the following equation
$$\{N,F\}+P^{low}+\{P^{high},F\}^{low}=\hat{N}+\hat{P}.$$
where $\hat{N}$ and $\hat{P}$ are defined by \eqref{nN} and \eqref{oOme}, respectively. Moreover, the function $F$ has gauge invariance, and for $\varepsilon$ sufficiently small, the followings hold
\begin{align}
&\Vert F^\theta\Vert_{D_4,\mathcal{O}_+} \le c\gamma^{-2} \varepsilon (r-r_+)^{-(2\tau+b+1)},\label{1est}\\
&\Vert F^{q_n}\Vert_{D_3,\mathcal{O}_+},\Vert F^{\bar{q}_n}\Vert_{D_3,\mathcal{O}_+} \le c\gamma^{-4} K_+^2  \varepsilon (r-r_+)^{-(4\tau+2b+2)}e^{-\rho |n|}, \quad |n|\le K_+,\label{2est}\\
&\Vert F^{I_j}\Vert_{D_2,\mathcal{O}_+} \le c\gamma^{-6} K_+^2  \varepsilon (r-r_+)^{-(6\tau+3b+3)},\quad j\in\mathcal{J},\label{3est}\\
&\Vert F^{q_mq_n}\Vert_{D_2,\mathcal{O}_+},\Vert F^{q_m\bar{q}_n}\Vert_{D_2,\mathcal{O}_+} ,\Vert F^{\bar{q}_m\bar{q}_n}\Vert_{D_2,\mathcal{O}_+}\nonumber \\
&\quad\quad\le c\gamma^{-6} K_+^{6}  \varepsilon (r-r_+)^{-(6\tau+3b+3)}e^{-\rho \max\{|m|,|n|\}}, \quad |m|,|n|\le K_+.\label{4est}
\end{align}
\end{lem}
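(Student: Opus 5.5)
The plan is to solve the homological equations \eqref{1ho}--\eqref{7ho} one after another, in the triangular order they are written in. Each right-hand side involves only pieces of $F$ already constructed on a larger domain. Every equation is solved coefficientwise in Fourier modes $e^{{\bf i}\langle k,\theta\rangle}$, and the small divisors are bounded from below on $\mathcal{O}_+$.

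\textbf{Step 1: $F^\theta$.} We set $\hat N^\theta=P^\theta_0$. For $k\neq0$ we take $F^\theta_k=P^\theta_k/({\bf i}\langle k,\omega\rangle)$. On $\mathcal{O}_+$ we have $|\langle k,\omega\rangle|>\gamma|k|^{-\tau}$. Differentiating once in $\xi$ gives an extra factor $|k|^{\tau}\gamma^{-1}\,|\partial_\xi\omega|\,|k|$, where $|\partial_\xi\omega|$ is bounded by Assumption A and the inductive bound \eqref{5.9}. Hence $|F^\theta_k|_{\mathcal{O}_+}\le c\gamma^{-2}|k|^{2\tau+1}|P^\theta_k|_{\mathcal O}$. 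Summing against $e^{|k|(r_4)}$ with $P$ analytic on width $r$ gives the factor $\sum_k|k|^{2\tau+1}e^{-|k|(r-r_+)/5}\le c(r-r_+)^{-(2\tau+b+1)}$, which is \eqref{1est}. Since $\Vert X_{P^{low}}\Vert\le\varepsilon$, the size of $P^\theta$ is at most $\varepsilon s^2$, which is absorbed into the norm convention.

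\textbf{Step 2: $F^{q_n}$ and $F^{\bar q_n}$ for $|n|\le K_+$.} The right-hand side is $P^{q_n}+\{P^{high},F^\theta\}^{q_n}=P^{q_n}+\langle\partial_I P^{(1)},\partial_\theta F^\theta\rangle^{q_n}$. Using \eqref{pb} and \eqref{pa}, the coefficient of $q_n$ in $P^{(1)}$ decays like $e^{-\rho|n|}$, and $\partial_\theta F^\theta$ is controlled by Lemma \ref{Cauchy} on $D_4\to D_3$. So the right-hand side is $\le c\gamma^{-2}\varepsilon(r-r_+)^{-(2\tau+b+2)}e^{-\rho|n|}$. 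The divisor $\langle k,\omega\rangle+\Omega_n$ is bounded below by $\gamma|k|^{-\tau}K_+^{-1}$ for $k\ne0$. For $k=0$, gauge invariance forces $|\alpha|-|\beta|=\pm1$ with $\sum k_i=\mp1$, so $k=0$ never occurs, and this is exactly where Assumption E enters. One division plus one $\xi$-derivative produces $\gamma^{-2}K_+^2|k|^{2\tau+1}$, which gives \eqref{2est}.

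\textbf{Step 3: $F^{I_j}$ and $F^2$.} Equation \eqref{4ho} is handled like Step 1, but now the right-hand side contains $\{P^{high},F^\theta+F^1\}^{I_j}$. That term carries the bounds of Steps 1--2, so the exponents accumulate to $\gamma^{-6}K_+^2(r-r_+)^{-(6\tau+3b+3)}$. For \eqref{5ho}--\eqref{7ho}, the right-hand side decays like $e^{-\rho\max\{|m|,|n|\}}$. This decay comes from \eqref{pb} for $P^{q_mq_n}$, and for the bracket terms from combining the $e^{-\rho|n|}$ decay of $F^1$ with the decay of the $P^{(1)},P^{(2)}$ coefficients. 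The divisors $\langle k,\omega\rangle+\Omega_m\pm\Omega_n$ are bounded below by $\gamma|k|^{-\tau}K_+^{-2}$ for $k\ne0$. In \eqref{7ho} with $k=0$, we define $\hat N^{q_m\bar q_n}$ on the diagonal $m=n$ to be the $k=0$ average. For $m\neq n$ we use $|\Omega_m-\Omega_n|\ge \frac23-$(small drift), valid by Assumption B and \eqref{5.9}.

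The remaining $k=0$ sign cases are the main obstacle. These are $q_mq_n$ or $\bar q_m\bar q_n$ with $k=0$, and $q_m\bar q_n$ with $\sum k_i=0$, $k\ne0$. The first kind are killed by gauge invariance, since $|\alpha|-|\beta|=\pm2$ forces $\sum k_i=\mp2$. For the second kind the Diophantine condition in $\mathcal O_+$ applies. The $\xi$-derivative costs $\gamma^{-2}K_+^4$, and combining with the inherited factors yields the $K_+^{6}$ in \eqref{4est}.

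\textbf{Remaining checks.} Three properties are left to verify.
\begin{itemize}
\item Gauge invariance of $F$ holds because each $F_{kl\alpha\beta}$ is a multiple of the corresponding right-hand-side coefficient, and the bracket preserves gauge invariance by the lemma above.
\item Real analyticity holds because $\overline{F_{kl\alpha\beta}}$ matches $F_{-k,l,\beta,\alpha}$.
\item Nonresonance for $|m|,|n|\le K_+$ is the only place $\mathcal O_+$ is used, and the tail $|n|>K_+$ is placed in $\hat P$ by \eqref{oOme}.
\end{itemize}
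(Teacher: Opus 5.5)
Your proposal is correct and follows essentially the same route as the paper. You solve \eqref{1ho}--\eqref{7ho} in triangular order mode by mode, use gauge invariance (as in \eqref{gau1}, \eqref{gau2}) to kill the $k=0$ modes of the $q_n,\bar q_n,q_mq_n,\bar q_m\bar q_n$ equations, apply the Diophantine conditions of $\mathcal{O}_+$ for $k\neq 0$ and the gap $|\Omega_m-\Omega_n|\ge\frac12$ for $k=0$, $m\neq n$, and sum $\sum_k|k|^{2\tau+1}e^{-|k|(r-r_+)/5}$. The only discrepancy is harmless bookkeeping: the Cauchy estimate on $\partial_\theta F^\theta$ costs an extra factor $(r-r_+)^{-1}$, so your exponents come out one larger than those stated in \eqref{2est}--\eqref{4est}, and this polynomial loss is absorbed by taking $\varepsilon_0$ small.
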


\begin{proof}
From the analysis of subsection \ref{gugu}, we know homological equation are reduced to equations \eqref{1ho}-\eqref{7ho}.  Note that these equations can be solved one by one. The estimate \eqref{1est}-\eqref{4est} can be obtained following the order
$$\eqref{1est}\Rightarrow \eqref{2est}\Rightarrow \eqref{3est},\eqref{4est}.$$

Since $P$ has gauge invariance, by the definition of $P^{low}$ and $P^{high}$, we know both the functions $P^{low}$ and $P^{high}$ have gauge invariance. By the gauge invariance of $P^{low}$, we can obtain the gauge invariance of $P^\theta$. Then, $F^\theta$ has gauge invariance according to equation \eqref{1ho}. Since gauge invariance are maintained under Poisson bracket, we obtain $\{P^{high}, F^{\theta}\}$ has gauge invariance. Therefore, one has
\begin{equation}\label{gau1}
P_0^{q_n}+\{P^{high}, F^{\theta}\}_0^{q_n}=0,\quad P^{\bar{q}_n}_0+\{P^{high}, F^{\theta}\}^{\bar{q}_n}_0=0.
\end{equation}
Then, $F^1$ has gauge invariance according to  \eqref{2ho} and \eqref{3ho}. As a consequence, we can deduce the gauge invariance for $\{P^{high}, F^{\theta}+F^1\}$. It follows that
\begin{equation}\label{gau2}
P^{q_mq_n}_0+\{P^{high}, F^{\theta}+F^1\}^{q_mq_n}_0=0,\quad P^{\bar{q}_m\bar{q}_n}_0+\{P^{high}, F^{\theta}+F^1\}^{\bar{q}_m\bar{q}_n}_0=0.
\end{equation}

Expanding the functions in equations \eqref{1ho}-\eqref{7ho} into Fourier series, from \eqref{gau1} and \eqref{gau2}, equations \eqref{1ho}-\eqref{7ho} are reduced into the following equations:
for $k\neq 0$ and $|m|,|n|\le K_+$,
\begin{align*}
&\langle k,\omega\rangle F^\theta_{k}={\bf i}P^\theta_{k},\\
&(\langle k,\omega \rangle +\Omega_n)F^{q_n}_{k}={\bf i} P^{q_n}_{k}+{\bf i}\{P^{high}, F^{\theta}\}_k^{q_n},\\
&(\langle k,\omega \rangle -\Omega_n)F^{\bar{q}_n}_{k}={\bf i} P^{\bar{q}_n}_{k}+{\bf i}\{P^{high}, F^{\theta}\}_k^{\bar{q}_n},\\
&\langle k,\omega\rangle F_k^{I_j}={\bf i}P_k^{I_j}+{\bf i}\{P^{high}, F^{\theta}+F^1\}_k^{I_j}, \\
&(\langle k,\omega\rangle+\Omega_m+\Omega_n)F_k^{q_mq_n}={\bf i}P_k^{q_mq_n}+{\bf i}\{P^{high}, F^{\theta}+F^1\}_k^{q_mq_n},\\
&(\langle k,\omega\rangle-\Omega_m-\Omega_n)F_k^{\bar{q}_m\bar{q}_n}={\bf i}P_k^{\bar{q}_m\bar{q}_n}+{\bf i}\{P^{high}, F^{\theta}+F^1\}_k^{\bar{q}_m\bar{q}_n},
\end{align*}
and for all $k\in\mathbb{Z}^b$, $|m|,|n|\le K_+$ and $|k|+|m-n|\neq 0$,
\begin{equation}\label{huaxue}
(\langle k,\omega\rangle+\Omega_m-\Omega_n)F_k^{q_m\bar{q}_n}={\bf i} P_k^{q_m\bar{q}_n}+{\bf i} \{P^{high}, F^{\theta}+F^1\}_k^{q_m\bar{q}_n}.
\end{equation}

We will consider equation \eqref{huaxue}, the other equations can be solved analogous. Since $\xi\in \mathcal{O}_+$, we have for $k\neq 0$,
\begin{equation}\label{fmn}
F^{q_m\bar{q}_n}_k=\frac{{\bf i} P_k^{q_m\bar{q}_n}+{\bf i} \{P^{high}, F^{\theta}+F^1\}_k^{q_m\bar{q}_n}}{\langle k,\omega \rangle +\Omega_m-\Omega_n}.
\end{equation}
By \eqref{pb}, \eqref{pa}, \eqref{1est} and \eqref{2est}, we have
\begin{equation}\label{haoy1}
|P_k^{q_m\bar{q}_n}|_{\mathcal{O}}\le  \varepsilon e^{-\rho \max\{|m|,|n|\}}e^{-|k|r_3},
\end{equation}
\begin{equation}\label{haoy2}
|\{P^{high}, F^{\theta}+F^1\}_k^{q_m\bar{q}_n}|_{\mathcal{O}_+}\le  c\gamma^{-4} K_+^2  \varepsilon (r-r_+)^{-(4\tau+2b+2)} e^{-\rho\max\{|m|, |n|\}}e^{-|k|r_3},
\end{equation}
where $r_i=r_++\frac{i}{5}(r-r_+)$ for $1\le i\le 5$.
Therefore, we obtain
\begin{equation}\label{1ff}
\sup_{\xi\in\mathcal{O}_+}|F^{q_m\bar{q}_n}_k|\le c \gamma^{-5}|k|^\tau K_+^4 \varepsilon  (r-r_+)^{-(4\tau+2b+2)} e^{-\rho \max\{|m|,|n|\}}e^{-|k|r_3}.
\end{equation}
To estimate $\partial_\xi F^{q_m\bar{q}_n}_k$, we differentiate both sides of \eqref{fmn} with respect to $\xi_j (j=1,2,\cdots,b)$ and obtain
\begin{align*}
\partial_{\xi_j} F^{q_m\bar{q}_n}_k=&\frac{{\bf i} \partial_{\xi_j}( P_k^{q_m\bar{q}_n}+\{P^{high}, F^{\theta}+F^1\}_k^{q_m\bar{q}_n})}{\langle k,\omega \rangle +\Omega_m-\Omega_n}\\
&-{\bf i}(P_k^{q_m\bar{q}_n}+ \{P^{high}, F^{\theta}+F^1\}_k^{q_m\bar{q}_n})\cdot \frac{\partial_{\xi_j} (\langle k,\omega \rangle +\Omega_m-\Omega_n)}{(\langle k,\omega \rangle +\Omega_m-\Omega_n)^2}.
\end{align*}
 implies
\begin{align}
\sup_{\xi\in\mathcal{O}_+}|\partial_{\xi_j} F^{q_m\bar{q}_n}_k|\le& \gamma^{-1}|k|^\tau K_+^2 \sup_{\xi\in\mathcal{O}_+}|\partial_\xi   (P_k^{q_m\bar{q}_n}+\{P^{high}, F^{\theta}+F^1\}_k^{q_m\bar{q}_n})|\nonumber\\
&+c\gamma^{-2}|k|^{2\tau+1} K_+^4 \sup_{\xi\in\mathcal{O}_+}|  P_k^{q_m\bar{q}_n}+\{P^{high}, F^{\theta}+F^1\}_k^{q_m\bar{q}_n}|\nonumber\\
\le & c\gamma^{-2}|k|^{2\tau+1} K_+^4 |P_k^{q_m\bar{q}_n}+\{P^{high}, F^{\theta}+F^1\}_k^{q_m\bar{q}_n}|_{\mathcal{O}_+}\nonumber\\
\le &c\gamma^{-6}|k|^{2\tau+1} K_+^{6}  \varepsilon (r-r_+)^{-(4\tau+2b+2)} e^{-\rho \max\{|m|,|n|\}}e^{-|k|r_3}.\label{2ff}
\end{align}
Combining \eqref{1ff} and \eqref{2ff}, one gets for $k\neq 0$,
\begin{equation}\label{Fk11}
|F^{q_m\bar{q}_n}_k|_{\mathcal{O}_+}\le c\gamma^{-6}|k|^{2\tau+1} K_+^{6}  \varepsilon (r-r_+)^{-(4\tau+2b+2)} e^{-\rho \max\{|m|,|n|\}}e^{-|k|r_3}.
\end{equation}
For $k=0$ and $m\neq n$, we have
$$F^{q_m\bar{q}_n}_0=\frac{{\bf i} P_0^{q_m\bar{q}_n}+{\bf i} \{P^{high}, F^{\theta}+F^1\}_0^{q_m\bar{q}_n}}{ \Omega_m-\Omega_n}.$$
Since $\Omega_n=n+\Omega^*_n+O(\varepsilon_0^{\frac{5}{6}}), (|n|\le K_+)$, from \eqref{5.9}, we have
\begin{align*}
|\Omega_m-\Omega_n|\geq \frac{2}{3}-2\varepsilon_0^{\frac{5}{6}}
\geq \frac{1}{2},
\end{align*}
if 
$\varepsilon_0$ is small enough.

Then by similar method of estimating $F^{q_m\bar{q}_n}_k$ for $k\neq 0$, one can easily obtained that
\begin{equation}\label{F011}
|F^{q_m\bar{q}_n}_0|_{\mathcal{O}_+}\le  c\gamma^{-4} K_+^2  \varepsilon (r-r_+)^{-(4\tau+2b+2)} e^{-\rho\max\{|m|, |n|\}}.
\end{equation}
The estimates \eqref{Fk11} and \eqref{F011} implies for all $k\in\mathbb{Z}^b$, $|m|,|n|\le K_+$ and $|k|+|m-n|\neq 0$,
$$|F^{q_m\bar{q}_n}_k|_{\mathcal{O}_+}\le c\gamma^{-6}|k|^{2\tau+1} K_+^{6}  \varepsilon (r-r_+)^{-(4\tau+2b+2)} e^{-\rho \max\{|m|,|n|\}}e^{-|k|r_3}.$$
It follows that
\begin{align*}
\Vert F^{q_m\bar{q}_n}\Vert_{D_2,\mathcal{O}_+} \le& \sum_{k\in \mathbb{Z}^b} |F^{q_m\bar{q}_n}_k|_{\mathcal{O}_+} e^{|k|r_3}\\
\le&  \sum_{k\in \mathbb{Z}^b} c\gamma^{-6}|k|^{2\tau+1} K_+^{6}  \varepsilon (r-r_+)^{-(4\tau+2b+2)} e^{-\rho \max\{|m|,|n|\}}\\
&\cdot e^{-|k|r_3}e^{|k|r_2}\\
\le &c\gamma^{-6}K_+^{6}  \varepsilon (r-r_+)^{-(4\tau+2b+2)} e^{-\rho \max\{|m|,|n|\}}\\
&\cdot \sum_{k\in \mathbb{Z}^b} |k|^{2\tau+1} e^{-|k|\frac{r-r_+}{5}}\\
\le & c\gamma^{-6} K_+^{6}  \varepsilon (r-r_+)^{-(6\tau+3b+3)}e^{-\rho \max\{|m|,|n|\}}.
\end{align*}
\end{proof}
Denote
$$\tilde{D}_{i}=D_{d, \rho_+}\left(r_++\frac{i}{5}(r-r_+), s_++\frac{i}{5}(s-s_+)\right),\quad 1\le i\le 5.$$
The following lemma holds for $X_F$.

\begin{lem}\label{gfpl}
The following estimates hold
\begin{equation}\label{fpfp}
\Vert X_F\Vert_{\tilde{D}_2,\mathcal{O}_+}\le \varepsilon^{\frac{5}{6}}.
\end{equation}
\end{lem}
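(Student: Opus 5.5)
The plan is to derive \eqref{fpfp} directly from the coefficient bounds \eqref{1est}--\eqref{4est} of Lemma \ref{holemma}, by estimating the three pieces of $\Vert X_F\Vert$ separately on $\tilde{D}_2$. Only $2\gamma^{-6}$-type losses, polynomial factors in $K_+$ and $(r-r_+)^{-1}$, and the passage from weight $\rho$ to $\rho_+$ appear. Since $\gamma=\varepsilon^{1/16}$, $K_+=2|\ln\varepsilon|K$ and $r-r_+=r_02^{-(\nu+2)}$, all of these are bounded by $\varepsilon^{-\kappa}$ with $\kappa<\frac{1}{6}$ once $\varepsilon_0$ is small. Indeed $\gamma^{-6}=\varepsilon^{-3/8}$, which is too large at face value. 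So I will first re-examine the bookkeeping: the coefficient bounds carry a factor $\varepsilon$ of $P^{low}$, and the $\gamma$-powers really come with $\gamma^{-2}$ per small divisor and its derivative. I will aim to show that the total loss is at most $\gamma^{-2}$ from each of the three layers, i.e.\ $\varepsilon^{-1/8}$ plus logarithmic factors, which leaves $\varepsilon^{5/6}$ after absorbing the logs. If this fails, I would shrink the exponent $1/16$ in $\gamma$ (the measure estimate only needs a positive power).

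\emph{Step 1 (action component).} $\partial_I F=\partial_I F^I$ is the vector $(F^{I_j})_j$, so $\Vert\partial_I F\Vert_{\tilde D_2}\le b\max_j\Vert F^{I_j}\Vert_{D_2}$ by \eqref{3est}.

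\emph{Step 2 (angle component).} $\frac{1}{s_2^2}\Vert\partial_\theta F\Vert$: apply the Cauchy inequality of Lemma \ref{Cauchy} in $\theta$ from $D_2$ to $\tilde D_2$ after shrinking $r$ slightly, losing $(r-r_+)^{-1}$. The terms $F^\theta$, $F^I I$, $F^1$ and $F^2$ are of size at most $s^2$ on the domain, since $|I|<s^2$ and quadratic terms in $q$ are $O(s^2)$. For $F^1=\sum_n F^{q_n}q_n$, I will use the decay $e^{-\rho|n|}$ from \eqref{2est}, summed against $|q_n|$. This gives at most $\Vert F^{q_n}\Vert\cdot s$, which is fine because $F^\theta$ and $F^1$ come with extra $s$ factors absorbed into the constant depending on $s$.

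\emph{Step 3 (normal components, the main obstacle).} I need
\[
\frac1s\sum_n\big(\Vert\partial_{q_n}F\Vert+\Vert\partial_{\bar q_n}F\Vert\big)\langle n\rangle^d e^{\rho_+|n|}.
\]
Here $\partial_{q_n}F=F^{q_n}+\sum_m(F^{q_mq_n}q_m+F^{q_n\bar q_m}\bar q_m)$ with $|n|\le K_+$ only, since $F$ is truncated. The weight $\langle n\rangle^d e^{\rho_+|n|}$ is controlled by the decay $e^{-\rho|n|}$ from \eqref{2est} and \eqref{4est}. Because $\rho_+<\rho$ and $|n|\le K_+$, we get $\langle n\rangle^d\le\langle K_+\rangle^d$, a further polynomial loss in $K_+$ and hence logarithmic in $\varepsilon$. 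For the quadratic terms, I bound $\sum_m |F^{q_mq_n}||q_m|\le \max_m|F^{q_mq_n}|\,\Vert q\Vert_{d,\rho_+}$ using $e^{-\rho\max\{|m|,|n|\}}\le e^{-\rho|n|}$, then sum over $|n|\le K_+$, producing a factor $K_+$.

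Collecting the three steps gives $\Vert X_F\Vert_{\tilde D_2,\mathcal O_+}\le c(s,r,b,d)\,\gamma^{-6}K_+^{6+d+1}(r-r_+)^{-(6\tau+3b+4)}\varepsilon$. I then check that this is $\le\varepsilon^{5/6}$: with $\gamma=\varepsilon^{1/16}$, $K_+$ polylogarithmic in $\varepsilon$ (for bounded iteration depth, and inductively comparable with $\varepsilon^{-o(1)}$), and $(r-r_+)^{-1}=2^{\nu+2}/r_0$, which is also subpolynomial relative to $\varepsilon_\nu$ because of the superexponential decay $\varepsilon_\nu=c\varepsilon_{\nu-1}^{5/4}$. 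The delicate point is precisely this exponent count, $\frac{6}{16}+o(1)\le\frac16$, which fails as stated. So the proof must either refine \eqref{1est}--\eqref{4est} to use only $\gamma^{-2}$ (each divisor and its derivative being estimated by $\gamma^{-2}$ directly, rather than cascading through \eqref{2est}), or adjust $\gamma_\nu$ to $\varepsilon_\nu^{1/40}$. I would attempt the former first.
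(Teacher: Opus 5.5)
You reproduce the paper's argument in Steps 1--3. It bounds the $I$- and $\theta$-components by \eqref{1est}, \eqref{3est}, sums \eqref{2est}, \eqref{4est} over $|n|\le K_+$ for the normal components, and reaches $c\gamma^{-6}K_+^{6}\varepsilon(r-r_+)^{-(6\tau+3b+3)}(\rho-\rho_+)^{-d}$. This matches your bound, with your $\langle K_+\rangle^{d}$ playing the role of $(\rho-\rho_+)^{-d}$, since $\rho-\rho_+\approx K^{-1}$. Your exponent count is correct, and it pinpoints a real error in the paper's last line. What \eqref{fpfp} needs is $c\gamma^{-6}K_+^{6}(r-r_+)^{-(6\tau+3b+3)}(\rho-\rho_+)^{-d}\varepsilon^{1/6}\le 1$. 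With $\gamma=\varepsilon^{1/16}$ the left side contains $\gamma^{-6}\varepsilon^{1/6}=\varepsilon^{-5/24}$, so it is false. The paper's displayed inequality has a stray extra factor $\varepsilon$, which makes it true but irrelevant to \eqref{fpfp}.

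The gap in your proposal is that it ends at an open alternative, and the branch you would try first cannot work. Note first that ``$\gamma^{-2}$ from each of the three layers'' is exactly the paper's $\gamma^{-6}$. You need a \emph{total} loss of $\gamma^{-j}$ with $j<8/3$, since $\varepsilon\gamma^{-j}=\varepsilon^{1-j/16}$.

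The loss is not an artifact of how \eqref{1est}--\eqref{4est} were derived; it is built into the triangular homological equations. The right-hand side of \eqref{2ho}--\eqref{3ho} contains $\{P^{high},F^\theta\}$, and that of \eqref{4ho}--\eqref{7ho} contains $\{P^{high},F^\theta+F^1\}$. Because $\Vert X_{P^{high}}\Vert$ is only $O(1)$, these brackets are as large as $F^\theta$ and $F^1$ themselves. So they cannot be pushed into $P^{low}_+$, which must be $O(\varepsilon^{5/4})$, and each layer then divides by a fresh divisor $\gamma|k|^{-\tau}K_+^{-j}$. Tracking sup and $\xi$-derivative parts separately, the best these equations yield under worst-case divisors is $\sup|F^\theta|\lesssim\gamma^{-1}\varepsilon$, $\sup|F^1|\lesssim\gamma^{-2}\varepsilon$, and $\sup|F^I|,\sup|F^2|\lesssim\gamma^{-3}\varepsilon$. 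The Whitney derivative costs one more $\gamma^{-1}$. Already $\varepsilon\gamma^{-3}=\varepsilon^{13/16}$ exceeds $\varepsilon^{5/6}$, so with $\gamma_\nu=\varepsilon_\nu^{1/16}$ the lemma is not reachable by this scheme.

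Your fallback is the correct repair. Take $\gamma_\nu=\varepsilon_\nu^{\kappa}$ with $6\kappa<\frac16$ (e.g.\ $\kappa=\frac1{40}$), or $4\kappa<\frac16$ with the sharper bookkeeping. Then the $K_+$, $(r-r_+)^{-1}$ and $(\rho-\rho_+)^{-1}$ factors are $\varepsilon^{-o(1)}$ uniformly in $\nu$, as you argue, and the bound closes. The later uses of $\gamma$ only need some positive power: in \eqref{phat} one has $e^{-(\rho-\rho_+)K_+}\approx\varepsilon^{2}$, and the measure sum is similar. But this changes the iteration, and with it the measure bound of Theorem \ref{main theorem}, from $c\varepsilon^{1/16}$ to $c\varepsilon^{\kappa}$. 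You should commit to this route and state the lemma for the modified $\gamma_\nu$, rather than present the argument as a proof of \eqref{fpfp} as posed.
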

\begin{proof}
From \eqref{1est} and \eqref{3est}, one deduces that
\begin{equation}\label{xf1}
\frac{1}{s^2}\Vert \partial_\theta F\Vert_{\tilde{D}_2,\mathcal{O}_+}, \quad \Vert \partial_I F\Vert_{\tilde{D}_2,\mathcal{O}_+}\le c\gamma^{-6} K_+^2  \varepsilon (r-r_+)^{-(6\tau+3b+3)},
\end{equation}
and
\begin{align}
&\sup_{\tilde{D}_2}\frac{1}{s}\sum_{n\in\mathbb{Z}}(\Vert \partial_{q_n} F\Vert_{\mathcal{O}_+}+\Vert \partial_{\bar{q}_n} F\Vert_{\mathcal{O}_+})\langle n\rangle^d e^{\rho_+|n|}\nonumber\\
\le &\sup_{\tilde{D}_2}\frac{1}{s}\sum_{|n|\le K_+}(\Vert F^{q_n}\Vert_{\mathcal{O}_+}+\Vert F^{\bar{q}_n}\Vert_{\mathcal{O}_+})\langle n\rangle^d e^{\rho_+|n|}\nonumber\\
&+\sup_{\tilde{D}_2}\frac{1}{s}\sum_{\substack{|m|,|n|\le K_+}}(\Vert F^{q_mq_n}\Vert_{\mathcal{O}_+}+\Vert F^{q_m\bar{q}_n}\Vert_{\mathcal{O}_+}+\Vert F^{\bar{q}_m\bar{q}_n}\Vert_{\mathcal{O}_+})|q_m|\langle n\rangle^d e^{\rho_+|n|}\nonumber\\
\le & c\gamma^{-6} K_+^{6}  \varepsilon (r-r_+)^{-(6\tau+3b+3)}(\rho-\rho_+)^{-d}.\label{xf2}
\end{align}
Combining \eqref{xf1} and \eqref{xf2}, we have there is a constant $c>0$ such that
$$\Vert X_F\Vert_{\tilde{D}_2,\mathcal{O}_+}\le  c\gamma^{-6} K_+^{6}  \varepsilon (r-r_+)^{-(6\tau+3b+3)}(\rho-\rho_+)^{-d}.$$
Then, by \eqref{haoyong} and \eqref{haode}, one gets
$$c\gamma^{-6} K_+^{6}  \varepsilon (r-r_+)^{-(6\tau+3b+3)}(\rho-\rho_+)^{-d}\varepsilon^{\frac{1}{6}}\le 1,
$$
by choosing $\varepsilon_0$ small enough.
 Then estimates \eqref{fpfp} follows immediately.
\end{proof}

\begin{lem}\label{reg}
The flow map $X_F^t$ statisfies  $X_F^t: \tilde{D}_{1}\rightarrow \tilde{D}_{2}, -1\le t\le 1$. In addition, the following estimate holds
$$\Vert X^t_F-id\Vert_{D_+}\le \varepsilon^{\frac{5}{6}},\quad \Vert \mathcal{D}X^t_F-Id\Vert_{D_+}\le 2\varepsilon^{\frac{4}{5}},$$\
where $D_+=D_{d,\rho_+}(r_+,s_+)$.
\end{lem}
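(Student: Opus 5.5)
The estimate will follow from Lemma \ref{gfpl} by a standard continuation (bootstrap) argument for the flow, in the style of Pöschel's KAM scheme. Write $X_F^t - id = \int_0^t X_F\circ X_F^{\tau}\,d\tau$, with the components measured in the weighted norm of $D_{d,\rho_+}$.

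\textbf{Step 1: the flow stays in $\tilde{D}_2$.} The first task is to show that for $-1\le t\le 1$ the flow starting in $\tilde{D}_1$ does not leave $\tilde{D}_2$. Let $T^*$ be the supremal time for which $X_F^\tau(z)\in\tilde{D}_2$ for all $|\tau|\le T^*$. On this interval, Lemma \ref{gfpl} gives $\Vert X_F\Vert_{\tilde{D}_2,\mathcal{O}_+}\le\varepsilon^{5/6}$. By the definition of the vector-field norm, the displacement after time $|t|\le T^*$ is bounded as follows:
\begin{itemize}
\item the $\theta$-displacement is at most $\varepsilon^{5/6}$;
\item the $I$-displacement is at most $s^2\varepsilon^{5/6}$;
\item the $(q,\bar q)$-displacement in $\Vert\cdot\Vert_{d,\rho_+}$ is at most $s\varepsilon^{5/6}$.
\end{itemize}
The gaps between $\tilde D_1$ and $\tilde D_2$ are $\frac15(r-r_+)$ in the angle and of order $s(s-s_+)$ and $(s-s_+)$ in the action and normal directions. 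By the choice of sequences, $r-r_+ = r_0 2^{-\nu-2}$, and $\varepsilon_\nu$ decays superexponentially. Hence $\varepsilon^{5/6}\ll \frac15(r-r_+)$ and $s\varepsilon^{5/6}\ll s-s_+$ once $\varepsilon_0$ is small. So the orbit remains strictly inside $\tilde D_2$, the continuation argument forces $T^*\ge 1$, and therefore $X_F^t:\tilde D_1\to\tilde D_2$. The same integral bound, restricted to $D_+\subset\tilde D_1$, gives $\Vert X_F^t-id\Vert_{D_+}\le\varepsilon^{5/6}$ in the weighted norm.

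\textbf{Step 2: the derivative estimate.} The derivative of the flow satisfies the variational equation $\mathcal{D}X_F^t - Id = \int_0^t \mathcal{D}X_F(X_F^\tau)\,\mathcal{D}X_F^\tau\,d\tau$. To bound $\mathcal{D}X_F$ on $\tilde D_1$, I would apply the generalized Cauchy estimates (Lemma \ref{Cauchy}) to $X_F$, shrinking the domain from $\tilde D_2$ to $\tilde D_1$. This costs a factor of order $(r-r_+)^{-1}$ or $(s-s_+)^{-1}$. In the $q$-direction, since $F$ is at most quadratic in $(q,\bar q)$, $\partial_q X_F$ can instead be read off directly from the coefficients (\ref{4est}), so no loss in $\rho$ occurs. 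This yields
$$\Vert\mathcal{D}X_F\Vert_{\tilde D_1}\le c(r-r_+)^{-1}(s-s_+)^{-1}\varepsilon^{5/6}\le \varepsilon^{4/5},$$
where the last inequality absorbs the polynomial loss $2^{c\nu}$ into $\varepsilon^{1/30}$ for $\varepsilon_0$ small. Gronwall's inequality then gives
$$\Vert \mathcal{D}X_F^t - Id\Vert\le e^{\varepsilon^{4/5}}-1\le 2\varepsilon^{4/5}.$$

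\textbf{Main obstacle.} The delicate point is the operator norm of $\mathcal{D}X_F$ on the weighted space $\ell^1_{d,\rho_+}$ with the $\langle n\rangle^d e^{\rho_+|n|}$ weights. One must check that the matrix blocks $F^{q_m\bar q_n}$, with decay $e^{-\rho\max\{|m|,|n|\}}$ and $\rho>\rho_+$, act boundedly with norm of order $\varepsilon$ times the factors already present in Lemma \ref{gfpl}. Here I would reuse the computation (\ref{xf2}), where the factor $(\rho-\rho_+)^{-d}$ was already absorbed. I would also track the $C^1$ dependence on $\xi$ in the same way, since the norms include $\partial_\xi$.
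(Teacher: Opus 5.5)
Your proposal is correct and follows essentially the same route as the paper. Like the paper, you use the integral equation $X_F^t=id+\int_0^t X_F\circ X_F^s\,ds$ with Lemma \ref{gfpl} to get $X_F^t:\tilde D_1\to\tilde D_2$ and the displacement bound. You then bound $\mathcal{D}X_F$ (equivalently $\mathcal{D}^2F$) by the Cauchy estimates (Lemma \ref{Cauchy}) from $\tilde D_2$ to $\tilde D_1$, and close with the variational equation for $\mathcal{D}X_F^t$.

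Your version makes explicit what the paper leaves implicit: the continuation argument, the comparison of $\varepsilon^{5/6}$ with the gaps $r_0 2^{-\nu-2}$ and $s_0 2^{-\nu-2}$, and the Gronwall step. The same continuation argument, applied between $D_+$ and $\tilde D_1$, is also what guarantees that orbits starting in $D_+$ stay in $\tilde D_1$, which is where your bound on $\mathcal{D}X_F$ is used.
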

\begin{proof}
Let
$$\Vert \mathcal{D}^m F\Vert_{D,\mathcal{O}} =\max\left\{\left\Vert \frac{\partial^{|i|+|l|+|\alpha|+|\beta|}F}{\partial\theta^i\partial I^l\partial q^\alpha\partial \bar{q}^\beta}\right\Vert_{D,\mathcal{O}_+}, \ |i|+|l|+|\alpha|+|\beta|=m\geq 2\right\}.$$
Since $F$ is a polynomial of order $1$ in $I$ and of order $2$ in $q$ and $\bar{q}$, by Lemma \ref{gfpl} and Lemma \ref{Cauchy}, we obtain
$$\Vert \mathcal{D}^m F\Vert_{\tilde{D}_{1},\mathcal{O}_+}\le \varepsilon^{\frac{4}{5}},\quad \forall m\geq 2.$$
Noting the equation
$$X_F^t=id+\int_0^t X_F\circ X_F^s ds$$
and Lemma \ref{gfpl}, one derives that $X_F^t:\tilde{D}_{1}\rightarrow \tilde{D}_{2}, -1\le t\le 1$ with
$$\Vert X^t_F-id\Vert_{D_+}\le \Vert X_F\Vert_{D_+}\le  \varepsilon^{\frac{5}{6}}.$$
In addition, we have the integral equation
\begin{align*}
\mathcal{D}X_F^t=&Id+\int_0^t \mathcal{D}X_F \cdot\mathcal{D} X_F^s ds\\
=&Id+\int_0^t J(\mathcal{D}^2F )\mathcal{D} X_F^s ds,
\end{align*}
where the notation $J$ is the standard symplectic matrix. It implies that
$$\Vert \mathcal{D}X_F^t-Id\Vert_{D_{+}}\le 2\Vert \mathcal{D}^2F\Vert_{\tilde{D}_{1}}\le 2\varepsilon^{\frac{4}{5}}.$$
\end{proof}

\subsection{Estimate for the new Hamiltonian}
Let $\Phi=X_F^{1}$ and
\begin{equation}\label{nnn}
N_+=e_++\langle \omega_+,I\rangle+\sum_{n\in\mathbb{Z}_1}\Omega^+_nq_n\bar{q}_n,
\end{equation}
where
\begin{align*}
e_+=e+\hat{e},\quad  \omega_+=\omega+\hat{\omega},\quad \Omega^+_n=\Omega_n+\hat{\Omega}_n,
\end{align*}
where $\hat{e},\ \hat{\omega}$ and $\hat{\Omega}_n$ is defined by \eqref{nN}. Combining Lemma \ref{holemma} and \eqref{ohat}, \eqref{Ohat} and \eqref{nN}, we obtain
$$|\omega_+-\omega|\le \varepsilon^{\frac{5}{6}},\quad |\Omega^+_n-\Omega_n|\le \varepsilon^{\frac{5}{6}}e^{-\rho |n|}.$$

In addition, by subsection \ref{sub5.1}, we have
\begin{align}
P_+=&\hat{P}+P^{high}+\{P^{high},F\}^{high}\nonumber\\
&+\int_0^1 (1-t)\{\{N+P^{high},F\},F\}\circ X_F^tdt\nonumber\\
&+\int_0^1 \{P^{low},F\}\circ X_F^tdt.\label{tusi}
\end{align}
where $\hat{P}$ is defined by \eqref{oOme}.

By the definition of $\hat{P}$ and \eqref{haoy1}, \eqref{haoy2}, one gets
\begin{equation}\label{phat}
\Vert X_{\hat{P}}\Vert_{\tilde{D}_{2},\mathcal{O}_+} \le c\gamma^{-4} K_+^2  \varepsilon (r-r_+)^{-(4\tau+2b+2)} \sum_{|n|> K_+} e^{-(\rho-\rho_+)|n|}\le \varepsilon^{\frac{5}{4}}.
\end{equation}

Note
\begin{align}\label{NF}
\{N,F\}=\hat{N}+\hat{P}-P^{low}-\{P^{high},F\}^{low},
\end{align}
which satisfies that $\Vert X_{\{N,F\}}\Vert_{\tilde{D}_1,\mathcal{O}_+}\le \varepsilon^{\frac{5}{6}}.$ Then, by Lemma \ref{gfpl}, we have
\begin{equation}\label{low1}
\Vert X_{\{\{N,F\},F\}}\Vert_{\tilde{D}_1,\mathcal{O}_+} \le c\varepsilon^{\frac{5}{3}}.
\end{equation}
Similarly, one can obtain
\begin{equation}\label{low2}
\Vert X_{\{P^{low},F\}}\Vert_{\tilde{D}_1,\mathcal{O}_+} \le c\varepsilon^{\frac{11}{6}},\quad\Vert X_{\{\{P^{high},F\},F\}}\Vert_{\tilde{D}_1,\mathcal{O}_+} \le c\varepsilon^{\frac{5}{3}}.
\end{equation}
In view of formula \eqref{tusi}, we get
\begin{align*}
P_+^{low}=&\hat{P}+\left(\int_0^1 (1-t)\{\{N+P^{high},F\},F\}\circ X_F^tdt\right)^{low}\\
&\quad +\left(\int_0^1 \{P^{low},F\}\circ X_F^tdt\right)^{low},\\
P_+^{high}=&P^{high}+\{P^{high},F\}^{high}\\
&\quad  \quad  \ +\left(\int_0^1 (1-t)\{\{N+P^{high},F\},F\}\circ X_F^tdt\right)^{high}\\
&\quad \quad \  +\left(\int_0^1 \{P^{low},F\}\circ X_F^tdt\right)^{high}.
\end{align*}

From Lemma \ref{reg} and estimates \eqref{phat}, \eqref{low1} and \eqref{low2}, we get
$$\Vert X_{P^{low}_+}\Vert_{D_+,\mathcal{O}_+}\le c\varepsilon^{\frac{5}{4}},$$
and
$$\Vert X_{P^{high}_+}\Vert_{D_+,\mathcal{O}_+}\le \Vert X_{P^{high}}\Vert_{D_+,\mathcal{O}_+}+c \varepsilon^{\frac{5}{6}}\le 2.$$

It is easy to verify the gauge invariance are maintained in the iterative process, since we have prove the Poisson bracket keep the gauge invariance.

Next, we will verify $P_+$ satisfies decay property. Let
$$P_+^{low}=\sum_{\substack{\alpha,\beta\in \mathbb{N}^{\mathbb{Z}_1}}}P^{low}_{+,\alpha\beta}(\theta,I;\xi)q^\alpha \bar{q}^\beta,$$
and
$$\ddot{P}_+=P_+^{high}-\dot{P}=\sum_{\alpha,\beta\in \mathbb{N}^{\mathbb{Z}_1}}\ddot{P}^+_{\alpha\beta}(\theta, I;\xi)q^\alpha\bar{q}^\beta.$$
where the term $\dot{P}$ remains unchanged during the whole KAM iterative and it has the same decay property as it in the beginning. The term $\dot{P}\circ \Phi-\dot{P}$ will be put into $\ddot{P}_+$ and $P_+^{low}$.
Thus, for $\dot{P}=\sum_{\alpha,\beta\in \mathbb{N}^{\mathbb{Z}_1}}\dot{P}_{\alpha\beta}(\xi)q^\alpha\bar{q}^\beta$, from
$$\Vert \dot{P}_{\alpha\beta}\Vert_{D,\mathcal{O}}\le
e^{-\rho(n_{\alpha\beta}^+-n_{\alpha\beta}^-)},$$
one sees
$$\Vert \dot{P}_{\alpha\beta}\Vert_{D_+,\mathcal{O}_+}\le
e^{-\rho_+(n_{\alpha\beta}^+-n_{\alpha\beta}^-)}.$$
In what follows, we will show
$$
\Vert P^{low}_{+,\alpha\beta}\Vert_{D_+,\mathcal{O}_+}\le
\varepsilon_+ e^{-\rho_+ n^*_{\alpha\beta}},\quad
\Vert \ddot{P}^+_{\alpha\beta}\Vert_{D_+,\mathcal{O}_+}\le
e^{-\rho_+ n^*_{\alpha\beta}}.
$$

Firstly, we have
\begin{align*}
P_+^{low}+\ddot{P}_+=&\hat{P}+(P^{high}-\ddot{P})+\{P^{high},F\}^{high}+\{P^{low},F\}\\
&\quad +\frac{1}{2!}\{\{N,F\},F\}+\frac{1}{2!}\{\{P,F\},F\}+\cdots\\
&\quad +\frac{1}{n!}\{\cdots\{N,F\},\cdots F\}+\frac{1}{n!}\{\cdots\{P,F\},\cdots F\}\\
&\quad +\cdots.
\end{align*}
For the term $\hat{P}$, from \eqref{pb}, \eqref{pa}, \eqref{1est}, \eqref{2est} and the definition of $\hat{P}$ in \eqref{oOme}, for $n^*_{\alpha\beta}>K_+$, one gets
\begin{align*}
\Vert \hat{P}_{+,\alpha\beta}\Vert_{D_+,\mathcal{O}_+}\le& c\gamma^{-5} K_+^8 \varepsilon  (r-r_+)^{-(4\tau+2b+2)} e^{-\rho n^*_{\alpha\beta}}\\
\le& c\gamma^{-5} K_+^8 \varepsilon  (r-r_+)^{-(4\tau+2b+2)} e^{-(\rho-\rho_+)n^*_{\alpha\beta}}\cdot e^{-\rho_+ n^*_{\alpha\beta}} \\
\le &\varepsilon_+ e^{-\rho_+ n^*_{\alpha\beta}}
\end{align*}
if $\varepsilon_0$ is sufficiently small.

For the term $P^{high}-\dot{P}$, we have
$$\Vert P^{high}-\dot{P}\Vert_{D_+,\mathcal{O}_+}=\Vert \ddot{P}_{\alpha\beta}\Vert_{D_+,\mathcal{O}_+}\le
2e^{-\rho_+ n^*_{\alpha\beta}}.$$

The decay property of the remainer in $P_+$, which are made up of several Poisson brackets, can be obtained by the following lemmata.
\begin{lem}\label{5.5}
The following estimate hold
\begin{align}\label{lem5.5}
\Vert \{P^{high},F\}_{\alpha \beta}\Vert_{\tilde{D}_{1},\mathcal{O}_+}\le c\varepsilon^{\frac{4}{5}}e^{-\rho_+  n^*_{\alpha\beta}}.
\end{align}
\end{lem}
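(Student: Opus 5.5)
We would prove \eqref{lem5.5} by expanding the Poisson bracket coefficientwise and tracking how the indices of the monomials combine. Write $P^{high}=\dot P+\ddot P$ and $F=F^\theta+F^I+F^1+F^2$. The bracket is
\[
\{P^{high},F\}=\langle\partial_I P^{high},\partial_\theta F\rangle-\langle\partial_\theta P^{high},\partial_I F\rangle+{\bf i}\sum_{n\in\mathbb{Z}_1}\big(\partial_{q_n}P^{high}\,\partial_{\bar q_n}F-\partial_{\bar q_n}P^{high}\,\partial_{q_n}F\big).
\]
For a fixed multi-index $(\alpha,\beta)$, the coefficient $\{P^{high},F\}_{\alpha\beta}$ is therefore a finite-type sum of products $P^{high}_{\alpha^1\beta^1}F_{\alpha^2\beta^2}$. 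In the action-angle part we have $\alpha^1+\alpha^2=\alpha$ and $\beta^1+\beta^2=\beta$. In the normal part a contracted index $n$ is removed, so $\alpha=\alpha^1+\alpha^2-e_n$, $\beta=\beta^1+\beta^2-e_n$ (or the same with the roles of $q$ and $\bar q$ swapped).

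\textbf{Action-angle terms.} We estimate these first, since they are the easy part. Here $\operatorname{supp}(\alpha,\beta)$ is contained in the union of the supports of $(\alpha^1,\beta^1)$ and $(\alpha^2,\beta^2)$. Hence either $n^*_{\alpha^1\beta^1}\ge n^*_{\alpha\beta}$ or $n^*_{\alpha^2\beta^2}\ge n^*_{\alpha\beta}$. The factor $F$ carries decay $e^{-\rho\max\{|m|,|n|\}}$ by Lemma \ref{holemma}, and $\ddot P$ carries $e^{-\rho n^*}$ by \eqref{pa}. For $\dot P$, whose bound \eqref{pb} only controls the diameter $n^+-n^-$, we use that any index belonging to $F$'s part is itself controlled by the decay of $F$. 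The $\partial_\theta$ and $\partial_I$ derivatives cost Cauchy factors (Lemma \ref{Cauchy}) in passing from $D_4$ to $\tilde D_1$. These factors are absorbed, together with the bound $\Vert X_F\Vert\le\varepsilon^{5/6}$ of Lemma \ref{gfpl}, into $\varepsilon^{4/5}$.

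\textbf{Contracted index and the triangle inequality.} The key step is the sum over the contracted index $n$. We need
\[
\sum_n e^{-\rho\, n^*_{\alpha^1\beta^1}}\,e^{-\rho\max\{|m|,|n|\}}\lesssim e^{-\rho_+ n^*_{\alpha\beta}},
\]
where $m$ denotes $F$'s remaining index. For $\ddot P$ and $F$, both factors carry weight $e^{-\rho|\cdot|}$ on their largest index, and the largest index of $(\alpha,\beta)$ appears in one of them. Then we use the gap $\rho-\rho_+$ to sum over $n$: $\sum_n e^{-(\rho-\rho_+)|n|}\le c(\rho-\rho_+)^{-1}$.

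For $\dot P$ we only have diameter decay. Suppose the contraction index $n$ belongs to both $\dot P$ and $F$. Then every index $j$ of $\dot P$ satisfies $|j|\le|n|+(n^+-n^-)$. Combining $e^{-\rho(n^+-n^-)}$ with $F$'s factor $e^{-\rho|n|}$ gives $e^{-\rho|j|}$ for all of $\dot P$'s indices, which yields $e^{-\rho n^*_{\alpha\beta}}$.

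\textbf{Closing the estimate and the main obstacle.} Finally, we collect the counting factors ($K_+$ powers, $\gamma^{-6}$, the $(r-r_+)$ and $(\rho-\rho_+)$ losses) and use the choice of sequences \eqref{haoyong}, \eqref{haode} to bound them by $\varepsilon^{-1/30}$ for $\varepsilon_0$ small. Together with $\varepsilon^{5/6}$ this gives $c\varepsilon^{4/5}e^{-\rho_+n^*_{\alpha\beta}}$.

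We expect the main obstacle to be the $\dot P$ contraction just described. The loss of the weaker diameter-type decay must be compensated by the decay of $F$, so we must be careful that $F$ always carries at least one index adjacent to the contraction. This holds because $F$ has degree at most $2$ in $(q,\bar q)$ and decays in its maximal index by \eqref{2est}, \eqref{4est}.
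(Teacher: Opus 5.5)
Your proposal follows the paper's proof. You split $P^{high}=\dot P+\ddot P$, control the contracted terms of $\{\dot P,F\}$ through $n^*_{\alpha\beta}\le (n^+-n^-)+n^*_{F}$, control $\{\ddot P,F\}$ through $n^*_{\alpha\beta}\le\max\{n^*_{1},n^*_{2}\}$, and absorb the Cauchy losses into $\varepsilon^{4/5}$. Your explicit summation over the contracted index using the gap $\rho-\rho_+$ is a detail the paper simply hides in the constant $c$.

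One correction concerns the action--angle terms of $\{\dot P,F\}$. They vanish identically, because $\dot P=\dot P(q,\bar q;\xi)$ has no $(\theta,I)$-dependence, and that is the justification you need. The reason you give would fail in that case. In a non-contracted product nothing ties the indices of $\dot P$ to those of $F$; for example, $|q_N|^4$ paired with $F^\theta$ yields $n^*_{\alpha\beta}=|N|$. Diameter decay alone would then not produce the factor $e^{-\rho_+ n^*_{\alpha\beta}}$.
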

\begin{proof}
By the definition of $P^{high}$, we have
$$P^{high}=\dot{P}+\ddot{P},$$
with
$$\Vert \dot{P}_{\alpha\beta}\Vert_{D,\mathcal{O}}\le
e^{-\rho(n_{\alpha\beta}^+-n_{\alpha\beta}^-)},\quad \Vert \ddot{P}_{\alpha\beta}\Vert_{D,\mathcal{O}}\le
e^{-\rho n^*_{\alpha\beta}}.$$
By simple computation, one has
\begin{align*}
\{\dot{P},F\}_{\alpha\beta}=&{\bf i} \sum_{\substack{n\in \mathbb{Z}\\ (l,k)+(L,K)=(\alpha,\beta)}}(\dot{P}_{l+e_n,k}F_{L,K+e_n}-\dot{P}_{l,k+e_n}F_{L+e_n,K}).
\end{align*}
It implies that
\begin{align}\label{ruiq1}
\Vert \dot{P}_{l+e_n,k}F_{L,K+e_n}\Vert_{\tilde{D}_1,\mathcal{O}_+}\le& e^{-\rho(n^+_{l+e_n,k}-n^-_{l+e_n,k})}\cdot \varepsilon^{\frac{5}{6}} e^{-\rho_+ n^*_{L,K+e_n}}\nonumber\\
\le & \varepsilon^{\frac{5}{6}} e^{-\rho_+ n_{\alpha\beta}^*},
\end{align}
by noticing that $n_{\alpha\beta}^*\le n^+_{l+e_n,k}-n^-_{l+e_n,k}+n^*_{L,K+e_n}$. Similarly, one can prove
\begin{align}\label{ruiq2}
\Vert \dot{P}_{l,k+e_n}F_{L+e_n,K})\Vert_{\tilde{D}_1,\mathcal{O}_+}\le & \varepsilon^{\frac{5}{6}} e^{-\rho_+n_{\alpha\beta}^*},
\end{align}
From \eqref{ruiq1} and \eqref{ruiq2}, we have
\begin{align}\label{zbu1}
\Vert \{\dot{P},F\}_{\alpha\beta}\Vert_{\tilde{D}_1,\mathcal{O}_+}\le  c\varepsilon^{\frac{5}{6}} e^{-\rho_+n_{\alpha\beta}^*}.
\end{align}
For the term $\{\ddot{P},F\}$, we have
\begin{align*}
\{\ddot{P},F\}_{\alpha\beta}=&{\bf i} \sum_{\substack{n\in \mathbb{Z}\\ (l,k)+(L,K)=(\alpha,\beta)}}(\ddot{P}_{l+e_n,k}F_{L,K+e_n}-\ddot{P}_{l,k+e_n}F_{L+e_n,K})\\
&+\sum_{(l,k)+(L,K)=(\alpha,\beta)}\{\ddot{P}_{lk},F_{LK}\}.
\end{align*}
By the fact $n_{\alpha\beta}^*\le \max\{ n^*_{l+e_n,k}, n^*_{L,K+e_n}\}$, we have
\begin{align}\label{hd1}
\Vert \ddot{P}_{l+e_n,k}F_{L,K+e_n}\Vert_{\tilde{D}_1,\mathcal{O}_+}\le &e^{-\rho n^*_{l+e_n,k}}\cdot \varepsilon^{\frac{5}{6}} e^{-\rho_+n^*_{L,K+e_n}}\nonumber\\
\le  &\varepsilon^{\frac{5}{6}} e^{-\rho_+n_{\alpha\beta}^*}.
\end{align}
Similarly, one has
\begin{align}\label{hd2}
\Vert \ddot{P}_{l,k+e_n}F_{L+e_n,K}\Vert_{\tilde{D}_1,\mathcal{O}_+}
\le  &\varepsilon^{\frac{5}{6}} e^{-\rho_+n_{\alpha\beta}^*}.
\end{align}
From Lemma \ref{czlem} and the inequality $n_{\alpha\beta}^*\le \max\{ n^*_{l,k}, n^*_{L,K}\}$, we have
\begin{equation}\label{hd3}
\Vert \{\ddot{P}_{lk},F_{LK}\}_{\alpha\beta}\Vert_{\tilde{D}_{1},\mathcal{O}_+}\le c(r-r_+)^{-1}(s-s_+)^{-2}\varepsilon^{\frac{5}{6}}e^{-\rho_+ n^*_{\alpha\beta}}\le\varepsilon^{\frac{4}{5}} e^{-\rho_+n_{\alpha\beta}^*}.
\end{equation}
By the previous estimates \eqref{hd1}, \eqref{hd2} and \eqref{hd3}, one obtains
\begin{align}\label{zbu2}
\Vert \{\ddot{P},F\}_{\alpha\beta}\Vert_{\tilde{D}_1,\mathcal{O}_+}\le  c\varepsilon^{\frac{4}{5}} e^{-\rho_+n_{\alpha\beta}^*}.
\end{align}
Putting estimates \eqref{zbu1} and \eqref{zbu2} together, we show estimate \eqref{lem5.5}.
\end{proof}

\begin{lem}\label{5.6}
The following estimate hold
\begin{align*}
\Vert \{P^{low},F\}_{\alpha \beta}\Vert_{\tilde{D}_{1},\mathcal{O}_+}\le \varepsilon^{\frac{9}{5}}e^{-\rho  n^*_{\alpha\beta}}.
\end{align*}
\end{lem}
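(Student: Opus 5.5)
I will follow the scheme of the proof of Lemma \ref{5.5}, with $P^{low}$ playing the role of $P^{high}$. The gain is that $P^{low}$ now carries the factor $\varepsilon$ instead of $1$. Recall the two inputs. By \eqref{pb}, the coefficients of $P^{low}$ satisfy
$$\Vert P^{low}_{\alpha\beta}\Vert_{D,\mathcal{O}}\le \varepsilon e^{-\rho n^*_{\alpha\beta}}.$$
By Lemma \ref{holemma} and Lemma \ref{gfpl}, the coefficients of $F$ satisfy
$$\Vert F_{\alpha\beta}\Vert_{\tilde D_2,\mathcal{O}_+}\le \varepsilon^{\frac{5}{6}}e^{-\rho n^*_{\alpha\beta}},$$
at least for $|m|,|n|\le K_+$; outside this range $F$ vanishes. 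Both $P^{low}$ and $F$ are polynomials of degree at most $2$ in $(q,\bar q)$ and of degree at most $1$ in $I$. Hence $\{P^{low},F\}$ has only finitely many monomial types, and each coefficient $\{P^{low},F\}_{\alpha\beta}$ is a finite combination of products.

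\textbf{Step 1: expand the bracket.} Exactly as in Lemma \ref{5.5}, I write
$$\{P^{low},F\}_{\alpha\beta}={\bf i}\sum_{\substack{n\\ (l,k)+(L,K)=(\alpha,\beta)}}\big(P^{low}_{l+e_n,k}F_{L,K+e_n}-P^{low}_{l,k+e_n}F_{L+e_n,K}\big)+\sum_{(l,k)+(L,K)=(\alpha,\beta)}\{P^{low}_{lk},F_{LK}\}_{(\theta,I)}.$$
The last sum is the $(\theta,I)$-part of the Poisson bracket.

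\textbf{Step 2: the $q$-contraction terms.} Each product is bounded by
$$\varepsilon e^{-\rho n^*_{l+e_n,k}}\cdot\varepsilon^{\frac56}e^{-\rho n^*_{L,K+e_n}}.$$
I use the elementary inequality
$$n^*_{\alpha\beta}\le\max\{n^*_{l+e_n,k},n^*_{L,K+e_n}\},$$
which holds because the support of $(\alpha,\beta)$ is contained in the union of the two supports. This gives the bound $\varepsilon^{\frac{11}{6}}e^{-\rho n^*_{\alpha\beta}}$ for each term. The remaining issue is summability over the contracted index $n$. Since $F$ is supported on $|n|\le K_+$, I will bound the unused exponential factor by $e^{-\rho|n|}$ and sum it, which gives at most $c\rho^{-1}$. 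Alternatively, I can crudely bound the sum by $cK_+$. In either case the loss is a power of $K_+=\rho_+^{-1}$, and $K_+\le |\ln\varepsilon|^{C}$ is absorbed by $\varepsilon^{\frac{1}{30}}$, since $\frac{11}{6}-\frac{9}{5}=\frac1{30}$.

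\textbf{Step 3: the $(\theta,I)$-bracket terms.} Here I apply Lemma \ref{czlem} with $\sigma=\frac{r-r_+}{5}$ and $\delta\sim s-s_+$ to each pair $(P^{low}_{lk},F_{LK})$. This gives the bound
$$c(r-r_+)^{-1}(s-s_+)^{-2}\,\varepsilon\cdot\varepsilon^{\frac56}\,e^{-\rho\max\{n^*_{lk},n^*_{LK}\}}.$$
The loss $(r-r_+)^{-1}(s-s_+)^{-2}$ grows only like $2^{3\nu}$. Along the sequence $\varepsilon_\nu=c\varepsilon_{\nu-1}^{5/4}$, this is again dominated by $\varepsilon^{-\frac1{30}}$ once $\varepsilon_0$ is small. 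The number of decompositions $(l,k)+(L,K)=(\alpha,\beta)$ is bounded by an absolute constant, since the degrees are bounded.

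\textbf{Main obstacle.} The delicate point is keeping the full weight $e^{-\rho n^*_{\alpha\beta}}$, not merely $e^{-\rho_+ n^*_{\alpha\beta}}$, while still summing over the contracted index $n$. I expect to handle this by splitting the product of exponentials as
$$e^{-\rho\max\{\cdot,\cdot\}}\cdot e^{-\rho\min\{\cdot,\cdot\}},$$
and using the second factor together with the truncation $|n|\le K_+$ for the summation. If the full rate $\rho$ cannot be kept, I would fall back to $\rho_+$. That weaker form is all that is actually used in the subsequent decay estimate for $P_+$.
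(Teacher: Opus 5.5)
Your proposal is correct and is essentially the paper's argument. The paper proves this lemma only by saying it is ``similar to the estimate of $\{\ddot{P},F\}$'' in Lemma \ref{5.5}, and your expansion into $q$-contraction and $(\theta,I)$-bracket terms, using $n^*_{\alpha\beta}\le\max\{n^*_{l+e_n,k},n^*_{L,K+e_n}\}$ and Lemma \ref{czlem}, is exactly that argument; your splitting $e^{-\rho\max}e^{-\rho\min}$ with $\min\{\cdot,\cdot\}\ge|n|$ also supplies the summation over the contracted index and the full rate $\rho$, both of which the paper leaves implicit. One small correction: $K_+$ is not $O(|\ln\varepsilon|^C)$ with $C$ independent of the step (it grows like $\exp(O((\ln|\ln\varepsilon|)^2))$), but it is still $\le\varepsilon^{-1/30}$ once $\varepsilon_0$ is small, and that is all you use.
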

The proof of this lemma is similar to the estimate of the term $\{\ddot{P},F\}$ and nothing is new.

From Lemma \ref{5.5} and Lemma \ref{5.6}, we immediately have
\begin{lem}\label{huax}
The following estimates hold
$$
\Vert \{P,F\}_{\alpha\beta}\Vert_{\tilde{D}_{1},\mathcal{O}_+}\le \varepsilon^{\frac{4}{5}}e^{-\rho n^*_{\alpha\beta}},$$
\end{lem}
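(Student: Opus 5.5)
The estimate in Lemma \ref{huax} follows from Lemmas \ref{5.5} and \ref{5.6} by linearity of the Poisson bracket in its first argument. We write $P=P^{low}+P^{high}$, so that coefficientwise
$$\{P,F\}_{\alpha\beta}=\{P^{high},F\}_{\alpha\beta}+\{P^{low},F\}_{\alpha\beta},$$
and apply the triangle inequality for $\Vert\cdot\Vert_{\tilde D_1,\mathcal O_+}$. Lemma \ref{5.5} controls the first piece by $c\varepsilon^{\frac45}e^{-\rho_+n^*_{\alpha\beta}}$. Lemma \ref{5.6} controls the second by $\varepsilon^{\frac95}e^{-\rho n^*_{\alpha\beta}}$.

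The first step is to reconcile the decay rates. Since $\rho_+<\rho$, we have $e^{-\rho n^*}\le e^{-\rho_+n^*}$, so both pieces decay at least like $e^{-\rho_+n^*_{\alpha\beta}}$. The sum is then at most $(c\varepsilon^{\frac45}+\varepsilon^{\frac95})e^{-\rho_+n^*_{\alpha\beta}}$.

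The second step absorbs the constant. The factor $c$ coming from Lemma \ref{5.5} is removed either by shrinking $\varepsilon_0$ and slightly lowering the exponent (e.g. $c\varepsilon^{\frac45}\le\varepsilon^{\frac{4}{5}-\eta}$), or, more cleanly, by tracking the proof of Lemma \ref{5.5}. There the individual terms \eqref{zbu1}, \eqref{zbu2} carry $\varepsilon^{\frac56}$ and $\varepsilon^{\frac45}$ up to harmless constants, and $c\varepsilon^{\frac56}\le\varepsilon^{\frac45}/2$ once $\varepsilon_0$ is small.

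The main obstacle is the decay rate in the conclusion. As stated, the lemma claims decay $e^{-\rho n^*_{\alpha\beta}}$, while Lemma \ref{5.5} only gives $e^{-\rho_+n^*_{\alpha\beta}}$. If the stronger rate is really needed, I would go back to the proof of Lemma \ref{5.5} and check whether the $\rho_+$ loss is genuine. In \eqref{ruiq1} the loss comes from bounding $F$'s coefficients, which by \eqref{2est}, \eqref{4est} actually decay at rate $\rho$ (with $\max\{|m|,|n|\}$). Only the $\dot P$ contribution with $n^+-n^-$ forces a combination, and the subadditivity $n^*_{\alpha\beta}\le (n^+-n^-)+n^*$ still yields rate $\rho$ when both factors decay at rate $\rho$.

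So the refined plan is to redo \eqref{ruiq1}--\eqref{hd3} using the $\rho$-decay of $F_{LK}$ from Lemma \ref{holemma} in place of $\rho_+$. The factors $K_+^6\gamma^{-6}$ are absorbed as in Lemma \ref{gfpl}. This gives $\Vert\{P,F\}_{\alpha\beta}\Vert\le\varepsilon^{\frac45}e^{-\rho n^*_{\alpha\beta}}$.

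If that fails, the $\rho_+$ version is all that is used downstream, since it is what feeds into $\ddot P_+$, and it suffices.
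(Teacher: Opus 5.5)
Your proposal is correct and is essentially the paper's argument: the paper simply writes $P=P^{low}+P^{high}$ and cites Lemmas \ref{5.5} and \ref{5.6} in one line. You are right that this literally gives only $c\varepsilon^{\frac45}e^{-\rho_+n^*_{\alpha\beta}}$, since the paper passes over both the constant and the decay rate. Your repair works: rerun \eqref{ruiq1}--\eqref{hd3} using the rate-$\rho$ coefficient bounds on $F$ from Lemma \ref{holemma}. In any case only the $\rho_+$ rate is needed downstream for $P^{low}_+$ and $\ddot P_+$.
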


From \eqref{NF}, we deduce that
$$\Vert \{N,F\}_{\alpha\beta}\Vert_{\tilde{D}_1,\mathcal{O}_+}\le \varepsilon^{\frac{5}{6}} e^{-\rho n_{\alpha\beta}^*}.$$
Then, we have the following lemma and the proof of this lemma is similar to Lemma \ref{5.6}.
\begin{lem}
The following estimates hold
$$\Vert \{\{N,F\},F\}\Vert_{\tilde{D}_{1},\mathcal{O}_+}\le \frac{1}{4}\varepsilon^{\frac{5}{4}}e^{-\rho_+n_{\alpha\beta}^*}.$$
\end{lem}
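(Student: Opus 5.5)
The estimate should follow from the identity \eqref{NF}, which writes $\{N,F\}=\hat N+\hat P-P^{low}-\{P^{high},F\}^{low}$, together with the coefficient-wise bracket estimates already proved for $\{P^{high},F\}$ and $\{P^{low},F\}$. So I would not expand $\{\{N,F\},F\}$ directly through $N$. Instead I would bracket each of the four pieces of \eqref{NF} with $F$ separately:
$$\{\{N,F\},F\}=\{\hat N,F\}+\{\hat P,F\}-\{P^{low},F\}-\{\{P^{high},F\}^{low},F\}.$$
Each piece has a smallness factor and a known coefficient decay. Coefficientwise, $\hat N$ and $\hat P$ are $O(\varepsilon)$ with decay $e^{-\rho n^*_{\alpha\beta}}$, by \eqref{haoy1}, \eqref{haoy2} and \eqref{phat}. $P^{low}$ is $O(\varepsilon)$ by \eqref{pb}. $\{P^{high},F\}^{low}$ is $O(\varepsilon^{4/5})$ by Lemma \ref{5.5}.

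For each bracket I would repeat the scheme of the proof of Lemma \ref{5.5}, which is the scheme behind Lemma \ref{5.6}. First, expand the $(\alpha,\beta)$ coefficient of $\{G,F\}$ as a sum over contractions at a single site $n$, that is, terms $G_{l+e_n,k}F_{L,K+e_n}$. Second, collect the remaining $(\theta,I)$-Poisson terms $\{G_{lk},F_{LK}\}$. For the contraction terms, the subadditivity $n^*_{\alpha\beta}\le\max\{n^*_{l+e_n,k},n^*_{L,K+e_n}\}$ together with $\rho_+<\rho$ gives decay $e^{-\rho_+ n^*_{\alpha\beta}}$. For the $(\theta,I)$ terms, Lemma \ref{czlem} applies on $\tilde D_1$ with the loss $c(r-r_+)^{-1}(s-s_+)^{-2}$. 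The factor from $F$ is always $\varepsilon^{5/6}$ (Lemma \ref{gfpl}, with coefficient decay from \eqref{1est}--\eqref{4est}). The products are therefore of size $\varepsilon\cdot\varepsilon^{5/6}$ for $\hat N,\hat P,P^{low}$, and of size $\varepsilon^{4/5}\cdot\varepsilon^{5/6}=\varepsilon^{49/30}$ for the last piece. Both are at most $\varepsilon^{49/30}$ times polynomial losses.

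The polynomial losses are $(r-r_+)^{-1}(s-s_+)^{-2}$ and the constant in the summation over $n$. They depend only on $r,s$ through the geometric scales $r_\nu,s_\nu$, so they are at most $\varepsilon^{-c'}$ for arbitrarily small $c'$ along the iteration, because $\varepsilon_\nu$ decays superexponentially. This gives $\le c\,\varepsilon^{49/30-\eta}e^{-\rho_+ n^*_{\alpha\beta}}$. Since $49/30>5/4$, for $\varepsilon_0$ small this is at most $\frac14\varepsilon^{5/4}e^{-\rho_+ n^*_{\alpha\beta}}$.

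The main obstacle is the piece $\{\hat N,F\}$ when one coefficient of $\hat N$ has $n^*=0$, namely the diagonal $\hat\Omega_n q_n\bar q_n$ terms, which carry the factor $e^{-\rho|n|}$ from \eqref{5.9}. There the contraction at site $n$ must carry the decay entirely through $F$'s coefficient $e^{-\rho\max\{|m|,|n|\}}$ from \eqref{4est}. I would check this case explicitly, using that $\hat\Omega_n$ is uniformly bounded by $\varepsilon^{5/6}$ and that contraction with a diagonal term preserves the sites of $F$'s monomial, so $n^*$ is unchanged. The $e^{-\rho_+n^*}$ decay is then inherited directly from $F$.
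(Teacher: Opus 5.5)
Your proposal is correct and is essentially the paper's argument. The paper likewise uses \eqref{NF} to bound the coefficients of $\{N,F\}$, by $\varepsilon^{5/6}e^{-\rho n^*_{\alpha\beta}}$, and then reruns the coefficientwise bracket estimate of Lemmas \ref{5.5}--\ref{5.6} against $F$; by bilinearity this is the same as your piece-by-piece bracketing.

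Two harmless slips. First, $\hat N$ is only $O(\varepsilon^{5/6})$, since it contains $\{P^{high},F^\theta+F^1\}_0$, so that product is $\varepsilon^{5/3}$ rather than $\varepsilon^{11/6}$; this is still above $\varepsilon^{5/4}$. Second, a diagonal monomial $q_n\bar q_n$ has $n^*=|n|$, not $0$. The factor $e^{-\rho|n|}$ in $\hat\Omega_n$ therefore already fits the general contraction scheme, so your ``main obstacle'' is not actually an obstacle. The only pieces with $n^*=0$ are $\hat e$, whose bracket with $F$ vanishes, and $\langle\hat\omega,I\rangle$, which your monomial-preservation remark handles.
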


Summarize the analysis above and let
\begin{align*}
P_+^{low}=&\hat{P}+\{P^{low},F\}\\
&\quad +\frac{1}{2!}\{\{N,F\},F\}^{low}+\frac{1}{2!}\{\{P,F\},F\}^{low}+\cdots\\
&\quad +\frac{1}{n!}\{\cdots\{N,F\},\cdots F\}^{low}+\frac{1}{n!}\{\cdots\{P,F\},\cdots F\}^{low}\\
&\quad +\cdots.
\end{align*}
and
\begin{align*}
\ddot{P}_+=&(P^{high}-\dot{P})+\{P^{high},F\}^{high}\\
&+\frac{1}{2!}\{\{N,F\},F\}^{high}+\frac{1}{2!}\{\{P,F\},F\}^{high}+\cdots\\
&+\frac{1}{n!}\{\cdots\{N,F\},\cdots F\}^{high}+\frac{1}{n!}\{\cdots\{P,F\},\cdots F\}^{high}\\
&+\cdots.
\end{align*}
then the decay property for $P^{low}_+$ and $\ddot{P}_+$ can be expressed as the following
\begin{lem}
The new perturbation $$P_+^{low}=\sum_{\substack{\alpha,\beta\in \mathbb{N}^{\mathbb{Z}_1}}}P^{low}_{+,\alpha\beta}(\theta,I;\xi)q^\alpha \bar{q}^\beta,$$
and
$$\ddot{P}_+=\sum_{\alpha,\beta\in \mathbb{N}^{\mathbb{Z}_1}}\ddot{P}^+_{\alpha\beta}(\theta, I;\xi)q^\alpha\bar{q}^\beta,$$ satisfies
$$
\Vert P^{low}_{+,\alpha\beta}\Vert_{D_+,\mathcal{O}_+}\le
\varepsilon_+ e^{-\rho_+ n^*_{\alpha\beta}},$$
$$
\Vert \ddot{P}^+_{\alpha\beta}\Vert_{D_+,\mathcal{O}_+}\le  \Vert \ddot{P}_{\alpha\beta}\Vert_{D_+,\mathcal{O}_+}+c\varepsilon^{\frac{4}{5}}e^{-\rho_+ n^*_{\alpha\beta}}\le
2e^{-\rho_+ n^*_{\alpha\beta}}.
$$
\end{lem}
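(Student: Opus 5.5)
The plan is to bound the coefficients of $P^{low}_+$ and $\ddot{P}_+$ term by term, using the Lie-series expansions written just before the statement. The estimates already obtained for the individual pieces are the inputs, and the whole argument then reduces to summing a geometric series in $\varepsilon^{\frac{4}{5}}$. Throughout I will use one elementary device. If a coefficient bound carries $e^{-\rho n^*_{\alpha\beta}}$ with the old weight $\rho$, I write $e^{-\rho n^*}=e^{-(\rho-\rho_+)n^*}e^{-\rho_+n^*}\le e^{-\rho_+n^*}$, so all bounds can be stated with the new weight $\rho_+$. For the low part this costs nothing. In the $\hat{P}$ term the extra factor $e^{-(\rho-\rho_+)K_+}$ is available, and it is this factor that absorbs $\gamma^{-5}K_+^8(r-r_+)^{-(4\tau+2b+2)}$ to produce $\varepsilon_+$. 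That computation was already carried out above.

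\textbf{Step 1: iterated brackets.} I will prove by induction on $j$ that
\[
\Vert \{\cdots\{P,F\},\cdots F\}_{\alpha\beta}\Vert_{\tilde{D}_1,\mathcal{O}_+}\le (c\varepsilon^{\frac{4}{5}})^j e^{-\rho_+ n^*_{\alpha\beta}},
\]
with $j$ brackets. The case $j=1$ is Lemma~\ref{huax}. For the inductive step I repeat the proof of Lemma~\ref{5.5}: expand the bracket with $F$ into its monomial-contraction part $\sum_n(G_{l+e_n,k}F_{L,K+e_n}-\cdots)$ and its $(\theta,I)$-part $\{G_{lk},F_{LK}\}$. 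In both parts I use $n^*_{\alpha\beta}\le\max\{n^*_{lk},n^*_{LK}\}$ (or the analogous inequality after contraction), together with the coefficient decay of $F$ from Lemma~\ref{holemma}. Since $F$ is at most quadratic in $q,\bar q$, only finitely many contraction patterns occur. The $(\theta,I)$-derivatives are handled with Lemma~\ref{czlem} on the nested domains. To avoid domain shrinkage at each $j$, I will shrink once from $\tilde{D}_2$ to $\tilde{D}_1$ and let the factor $j!$ in the Lie series pay for the Cauchy losses. This is the standard trick: the $j$-fold bracket on a domain shrunk by $1/j$ per step costs $c^jj^j$, and $j^j/j!\le e^j$.

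For the $N$-series I use $\{N,F\}$ from \eqref{NF}, whose coefficients satisfy the bound with $\varepsilon^{\frac{5}{6}}$. The previous lemma gives $\frac14\varepsilon^{\frac54}$ for $j=2$, and the same induction gives $(c\varepsilon^{\frac45})^{j-2}\varepsilon^{\frac54}$ for $j\ge 2$.

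\textbf{Step 2: summation.} For $P^{low}_+$, the contributions are as follows:
\begin{itemize}
\item $\hat{P}$ contributes $\le\varepsilon_+e^{-\rho_+n^*}/4$ by the estimate above;
\item $\{P^{low},F\}$ contributes $\le\varepsilon^{\frac95}$ by Lemma~\ref{5.6};
\item the $N$-series contributes $\sum_{j\ge2}\frac{1}{j!}(\cdot)\le\frac14\varepsilon^{\frac54}(1+O(\varepsilon^{\frac45}))$;
\item the $P$-series from $j\ge2$ contributes $\le(c\varepsilon^{\frac45})^2$.
\end{itemize}
The last item is the weak point. Only the low projection of $\{\{P^{high},F\},F\}$ enters $P^{low}_+$, and I must check that it is genuinely quadratic in $F$. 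In fact $F$ is of size $\varepsilon^{\frac56}$, so the term is $\varepsilon^{\frac53}$ up to $\gamma,K$ losses, which are absorbed since $\varepsilon_+=c\varepsilon^{\frac54}$ and $\frac53>\frac54$. Taking $\varepsilon_0$ small then gives $\Vert P^{low}_{+,\alpha\beta}\Vert\le\varepsilon_+e^{-\rho_+n^*}$.

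For $\ddot{P}_+$, the term $P^{high}-\dot{P}=\ddot{P}$ is kept as it stands. The remaining terms are $\{P^{high},F\}^{high}$, bounded by Lemma~\ref{5.5} with $c\varepsilon^{\frac45}$, plus the series tails, which are $O(\varepsilon^{\frac54})$. Together these give $\Vert\ddot{P}_{\alpha\beta}\Vert+c\varepsilon^{\frac45}e^{-\rho_+n^*}$. Feeding in the inductive hypothesis \eqref{pa}, which gives $\Vert\ddot{P}_{\alpha\beta}\Vert\le(1+\sum_{i\le\nu}c\varepsilon_i^{5/6})e^{-\rho_\nu n^*}$, and noting $c\varepsilon^{\frac45}\le c\varepsilon_\nu^{\frac56}$ up to constant adjustments, the total stays below $2e^{-\rho_+n^*}$ because $\sum_i\varepsilon_i^{5/6}$ converges superexponentially.

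\textbf{Main obstacle: the $\dot{P}$ contributions.} The hard step is Step 1 when the bracket involves $\dot{P}$. Its coefficients decay only in the diameter $n^+-n^-$, not in $n^*$, so the decay in $n^*$ must come entirely from the $F$ factor via $n^*_{\alpha\beta}\le(n^+-n^-)_{\dot P}+n^*_F$, exactly as in \eqref{ruiq1}. For iterated brackets I must check that every monomial keeps at least one $F$ factor anchoring it, so that the triangle-type inequality can be chained. I would verify this by tracking that each $j$-fold bracket contains $j$ copies of $F$. Each contraction links the supports of the factors, so the support of the product is connected through $F$-anchored pieces. Chaining then gives $n^*\le\sum(\text{diameters})+\min n^*_F$, and the exponential weights multiply accordingly.
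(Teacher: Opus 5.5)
Your plan is essentially the paper's own argument. The paper obtains this lemma simply by collecting the bounds already proved for $\hat P$, $\ddot P$, $\{P^{high},F\}$, $\{P^{low},F\}$ and $\{\{N,F\},F\}$ over the Lie-series expansions of $P^{low}_+$ and $\ddot P_+$; you follow the same route but make the induction over nested brackets and the final summation explicit.

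Two small repairs are needed. First, since $\varepsilon<1$ one has $\varepsilon^{\frac45}\ge\varepsilon^{\frac56}$, so the increment $c\varepsilon^{\frac45}$ is not absorbed into the $\varepsilon_i^{\frac56}$-sum of \eqref{pa} ``up to constants''; carry the cumulative factor $1+\sum_i c\varepsilon_i^{\frac45}$ instead, which still stays below $2$. Second, your $c^jj^j$ count for $j$ nested brackets needs a first-order Cauchy loss per bracket, not the $\sigma^{-1}\delta^{-2}$ loss of Lemma~\ref{czlem}, which $j!$ would not absorb. That first-order loss is available coefficientwise: the $q$-contractions cost no derivative, and only one $\partial_\theta$ or $\partial_I$ falls on the growing factor in each bracket.
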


\subsection{The convergence of KAM iteration }
In the following, we will prove the iteration process is converge. Let us define $\Phi^\nu=\Phi_1\circ\Phi_2\circ\cdots \circ \Phi_{\nu}, \nu=1,2,\cdots$. By induction argument shows that $\Phi^{\nu}: D_{\nu}\times \mathcal{O}_\nu\rightarrow D_{\nu-1}\times \mathcal{O}_{\nu-1}$ and
$$H_0\circ\Phi^\nu=H_\nu=N_\nu+P_\nu.$$
Let $\mathcal{O}_\varepsilon=\cap_{\nu=0}^\infty \mathcal{O}_\nu$. 
In view of Lemma \ref{itlem}, it concludes that for $\nu\geq 1$
\begin{align*}
\Vert \Phi^{\nu+1}-\Phi^{\nu}\Vert_{D_{\nu+1}\times \mathcal{O}_{\nu+1}}\le \Vert D\Phi^{\nu}\Vert _{D_{\nu}\times \mathcal{O}_{\nu}} \Vert \Phi_{\nu+1}-id\Vert_{D_{\nu+1}\times \mathcal{O}_{\nu+1}}
\end{align*}
and
\begin{align*}
\Vert D\Phi^{\nu}\Vert _{D_{\nu}\times \mathcal{O}_{\nu}} \le \prod_{i=1}^{\nu} \Vert D\Phi_{i}\Vert _{D_{i}\times \mathcal{O}_{i}}\le  \prod_{i=1}^{\nu} (1+c\varepsilon_{i-1}^{\frac{4}{5}})\le 2.
\end{align*}
It follows that
\begin{align*}
\Vert \Phi^{\nu+1}-\Phi^{\nu}\Vert_{D_{\nu+1}\times \mathcal{O}_{\nu+1}}\le 2 \Vert \Phi_{\nu+1}-id\Vert_{D_{\nu+1}\times \mathcal{O}_{\nu+1}}\le c\varepsilon_{\nu}^{\frac{5}{6}}.
\end{align*}
Hence, let $\Phi^0=id$, one gets
\begin{align*}
\Vert \Phi^{\nu+1}-\Phi^0\Vert_{D_{\nu+1}\times \mathcal{O}_{\nu+1}}\le \sum_{i=0}^{\nu} \Vert \Phi^{i+1}-\Phi^{i}\Vert_{D_{\nu+1}\times \mathcal{O}_{\nu+1}} \le \sum_{i=0}^{\nu} c\varepsilon_{\nu}^{\frac{5}{6}}\le c\varepsilon_{0}^{\frac{5}{6}}
\end{align*}
It means that $\Phi^\nu$ converge uniformly on $D_{d,0}(\frac{1}{2}r_0,\frac{1}{2}s_0)\times \mathcal{O}_\varepsilon$ to $\Phi^\infty$
with
\begin{equation}\label{phiinfty}
\Vert \Phi^\infty-\Phi^0\Vert_{D_{d,0}(\frac{1}{2}r_0,\frac{1}{2}s_0)\times \mathcal{O}_\varepsilon}\le c\varepsilon_0^{\frac{5}{6}}.
\end{equation}
Moreover, The Hamiltonions $H_\nu$ will converge with
$$N_\infty=e_\infty+\langle \omega_\infty, I\rangle+\sum_{n\in\mathbb{Z}_1}\Omega^\infty_n q_n\bar{q}_n.$$
Since
$$\varepsilon_{\nu}=c\varepsilon_{\nu-1}^{\frac{5}{4}}=c^{-4}(c^4\varepsilon_0)^{(\frac{5}{4})^{\nu}},$$
 we have $\varepsilon_\nu\rightarrow 0$ as $\nu\rightarrow \infty$. By noting  $\Vert X_{P_\nu}\Vert_{D_{d,0}(\frac{1}{2}r_0,\frac{1}{2}s_0),\mathcal{O}_\varepsilon}\le \varepsilon_\nu$, it follows that $\Vert X_{P_\nu}\Vert_{D_{d,0}(\frac{1}{2}r_0,\frac{1}{2}s_0),\mathcal{O}_\varepsilon}\rightarrow 0$ as $\nu\rightarrow 0$.

Let $\phi_{H_0}^t$ be the flow of $X_{H_0}$. From $H_0\circ \Phi^\nu=H_\nu$, we know that
\begin{equation}\label{phih}
\phi_{H_0}^t\circ \Phi^\nu=\Phi^\nu\circ\phi_{H_\nu}^t.
\end{equation}
The uniform convergence of $\Phi^\nu$ and $X_{H_\nu}$ implies that one can take the limit in \eqref{phih} and arrive at
\begin{equation}\label{phih1}
\phi_{H_0}^t\circ \Phi^\infty=\Phi^\infty\circ\phi_{H_\infty}^t
\end{equation}
on $D_{d,0}(\frac{1}{2}r_0,\frac{1}{2}s_0)\times\mathcal{O}_\varepsilon$, where
$$\Phi^\infty:D_{d,0}(\frac{1}{2}r_0,\frac{1}{2}s_0)\times\mathcal{O}_\varepsilon\rightarrow D_0\times \mathcal{O}_0.$$
It follows from \eqref{phih1} that
$$\phi_{H_0}^t(\Phi^\infty(\{\xi\}\times \mathbb{T}^b)=\Phi^\infty\phi_{H_\infty}^t(\{\xi\}\times \mathbb{T}^b)=\Phi^\infty(\{\xi\}\times \mathbb{T}^b)$$
for $\xi\in\mathcal{O}_\varepsilon$. This means that $\Phi^\infty(\{\xi\}\times \mathbb{T}^b)$ is an embedded invariant torus of the original perturbed Hamiltonian system at $\xi\in\mathcal{O}_\varepsilon$ with the estimate \eqref{phiinfty}. Moreover, we remark that the frequencies $\omega_\infty(\xi)$ associated with $\Phi^\infty(\{\xi\}\times \mathbb{T}^b)$ are slightly different from $\omega^0(\xi)$. Actually, by Lemma \ref{itlem}, we have
\begin{align*}
\Vert \omega^{\nu+1}-\omega^0\Vert_{\mathcal{O}_{\nu+1}}\le\sum_{i=0}^{\nu}\Vert   \omega^{i+1}-\omega^i\Vert_{\mathcal{O}_{i+1}}\le \sum_{i=0}^\nu \varepsilon_i^{\frac{5}{6}}\le c\varepsilon_0^{\frac{5}{6}}.
\end{align*}
Therefore, by letting $\nu\rightarrow \infty$, we get
$$\Vert \omega_\infty-\omega^0\Vert_{\mathcal{O}_\varepsilon}\le c\varepsilon_0^{\frac{5}{6}}.$$
\subsection{measure estimate}
In the rest, we will estimate the measure for the parameters. By the iteration lemma
$$\mathcal{O} \backslash \mathcal{O}_\varepsilon\subset \cup_{\nu=0}^\infty \cup_{k \neq 0}\mathcal{R}_k^\nu$$
with
$$\mathcal{R}_k^\nu=\mathcal{R}_k^{\nu0}\bigcup \left(\bigcup_{|n|\le K_{\nu+1}}\mathcal{R}_{kn}^{\nu 1}\right)\bigcup \left(\bigcup_{|m|,|n|\le K_{\nu+1}}\mathcal{R}_{km}^{\nu2}\right)\bigcup \left(\bigcup_{|m|,|n|\le K_{\nu+1}}\mathcal{R}_{kmn}^{\nu3}\right),$$
where
\begin{align*}
&\mathcal{R}_k^{\nu0}=\left\{\xi\in\mathcal{O}_\nu: |\langle k,\omega_\nu\rangle|<\frac{\gamma_\nu}{|k|^\tau}\right\},\\
&\mathcal{R}_{kn}^{\nu1}=\left\{\xi\in\mathcal{O}_\nu: |\langle k,\omega_\nu\rangle+\Omega_n^\nu|<\frac{\gamma_\nu}{|k|^\tau K_{\nu+1}}\right\},\\
&\mathcal{R}_{kmn}^{\nu2}=\left\{\xi\in\mathcal{O}_\nu: |\langle k,\omega_\nu\rangle+\Omega_m^\nu+\Omega_n^\nu|<\frac{\gamma_\nu}{|k|^\tau K^{2}_{\nu+1}}\right\},\\
&\mathcal{R}_{kmn}^{\nu3}=\left\{\xi\in\mathcal{O}_\nu: |\langle k,\omega_\nu\rangle+\Omega_m^\nu-\Omega_n^\nu|<\frac{\gamma_\nu}{|k|^\tau K^{2}_{\nu+1}}\right\}.
\end{align*}





For $|k|\neq 0$, without loss of generality we suppose $|k_1|=\max\{ |k_1|,|k_2|\cdots,|k_b|\}$. From assumption {\bf  Assumption A} and {\bf   B}, one has
\begin{align*}
&|\partial_{\xi_1}(\langle k,\omega_\nu\rangle +\Omega_m^\nu-\Omega_n^\nu)|\\
 \geq & |k_1|(1-\frac{1}{4b}-2\varepsilon_0^{\frac{5}{6}})-(|k|-|k_1|)(\frac{1}{4b}+2\varepsilon_0^{\frac{5}{6}})-\frac{1}{2b}-4\varepsilon_0^{\frac{5}{6}}\\
\geq &|k_1|-|k|(\frac{1}{4b}+2\varepsilon_0^{\frac{5}{6}})-\frac{1}{2b}-4\varepsilon_0^{\frac{5}{6}}\\
\geq &\frac{1}{6b}|k|,
\end{align*}
for sufficiently small $\varepsilon_0$ . Therefore, 
we have
$$|\mathcal{R}_{kmn}^{\nu3}|\le c\gamma_\nu |k|^{-(\tau+1)}K_{\nu+1}^{-2}\cdot |\mathcal{O}|.$$
where $c$ is a constant depend on $b$.  By similar discussion, we will obtain
\begin{align*}
&\left|\mathcal{R}_k^{\nu0}\bigcup \left(\bigcup_{|n|\le K_{\nu+1}}\mathcal{R}_{kn}^{\nu 1}\right)\bigcup \left(\bigcup_{|m|,|n|\le K_{\nu+1}}\mathcal{R}_{km}^{\nu2}\right)\bigcup \left(\bigcup_{|m|,|n|\le K_{\nu+1}}\mathcal{R}_{kmn}^{\nu3}\right)\right| \\
\le &c\gamma_\nu|k|^{-(\tau+1)} |\mathcal{O}|.
\end{align*}
Thus, we have
$$|\mathcal{O}\setminus \mathcal{O}_\varepsilon| \le |\bigcup_{\nu\geq 0}\bigcup_{k\neq 0} \mathcal{R}_k^\nu |\le c\sum_{\nu\geq 0}\sum_{k\neq 0}\frac{\gamma_\nu}{|k|^{\tau+1}}=c\sum_{\nu\geq 0}\gamma_\nu\cdot |\mathcal{O}|\sim \gamma_0 |\mathcal{O}|=\varepsilon^{\frac{1}{16}} |\mathcal{O}|.$$
by $\tau>b-1$.

\section*{Declarations}

\textbf{Funding} Shengqing Hu is supported by [National Natural Science Foundation of China (Grant No.12201532)]

\textbf{Conflict of interest}  On behalf of all authors, the corresponding author states that there is no conflict of interest.\\

\textbf{Availability of data }  The manuscript has no associated data.

\bibliographystyle{alpha} 
\bibliography{ref}

 \end{document}